\newcommand{\cent}{\operatorname{cent}}
\DeclareMathAlphabet{\mathpzc}{OT1}{pzc}{m}{it}
\def\ch{{\mathcal H}}
\DeclareMathAlphabet\mathcalbf{OMS}{cmsy}{b}{n}
\DeclareMathAlphabet\EuScript{U}{eus}{m}{n}
\DeclareMathAlphabet\EuScriptBold{U}{eus}{b}{n}
\numberwithin{equation}{section}
\newtheorem{theorem}{Theorem}[section]
\newtheorem{lemma}[theorem]{Lemma}
\newtheorem{proposition}[theorem]{Proposition}
\newtheorem{definition}[theorem]{Definition}
\newtheorem{example}[theorem]{Example}
\newtheorem{remark}[theorem]{Remark}
\newtheorem{claim}[theorem]{Claim}
\def \Z {{\mathbb Z}}
\def \P {{\mathcal P}}
\def \z {{\bf z}}
\def \bw {{\bf w}}
\def \b0 {{\bf 0}}
\def \im {\mbox {Im}}
\def \re {\mbox {Re}}
\def \x {{\bf x}}
\def \bw {{w}}
\begin{document}
\allowdisplaybreaks

\title[Optimal lifting of Lie algebra and Cauchy--Szeg\"{o} projection on model domains]
{Optimal lifting of Levi-degenerate hypersurfaces and applications to the Cauchy--Szeg\"o projection}

\author{Der-Chen Chang,  Ji Li, Alessandro Ottazzi  and Qingyan Wu}

\address{Der-Chen Chang, Department of Mathematics and Statistics, Georgetown University,
Washington D. C. 20057, USA. \\
and \\
Graduate Institute of Business Administration, 
College of Management, Fu Jen Catholic University, 
Taipei 242, Taiwan.
}
\email {chang@georgetown.edu}

\address{Ji Li, Department of Mathematics, Macquarie University, NSW, 2109, Australia}
\email{ji.li@mq.edu.au}

\address{Alessandro Ottazzi, School of Mathematics and Statistics, University of New South Wales, Sydney 2052, Australia}
\email{a.ottazzi@unsw.edu.au}

\address{Qingyan Wu, Department of Mathematics,
         Linyi University,
         Shandong, 276005, China
         }
\email{wuqingyan@lyu.edu.cn}

\subjclass[2010]{32V05, 32V20, 53C56}
\date{\today}
\keywords{Lifting Lie algebra, Kohn-Laplacian, Cauchy--Szeg\"{o} projection}

\begin{abstract}
We consider a family of Levi-degenerate finite type hypersurfaces in $\mathbb C^2$, where in general there is no group structure. We lift these domains to stratified Lie groups via a constructive proof, which optimizes
the well-known lifting procedure to free Lie groups of general manifolds defined by Rothschild and Stein. This yields an explicit version of the Taylor expansion with respect to the horizontal vector fields induced by the sub-Riemannian structure on these hypersurfaces. Hence, as an application, we  establish the Schatten class estimates for the commutator of the Cauchy--Szeg\"{o} projection  with respect to   a suitable quasi-metric defined on the hypersurface.
\end{abstract}

\maketitle

\section{\bf Introduction: background and main results}\label{sec:intro}

In several complex variables,  crucial objects such as the fundamental solution for the Kohn--Laplacian,
the Cauchy--Szeg\"{o} kernel, Taylor-type expansions, etc.., are described explicitly only in very few cases. However, explicit formulae are very important for related analysis, especially in unbounded domains. 
In this paper, we consider
model domains  
$\partial \Omega_k$ in $\mathbb C^2$ (we will study the higher dimensional cases in a subsequent paper), in higher step case: 
\begin{align*}
  \partial\Omega_k&:=\Big\{ (z,w)\in\mathbb C^2:\  \im(w)= \frac{1}{2k}|z|^{2k} \Big\},\quad k\geq2
\end{align*}
with the horizontal vector fields $X_1= \partial_{x_1}+x_2(x_1^2+x_2^2)^{k-1} \partial_{t}$ and
$ X_2= \partial_{x_2} -x_1(x_1^2+x_2^2)^{k-1} \partial_{t}$, where $z=x_1+ix_2$ and $t=\re(w)$. When $k=1$, $\partial \Omega_k$ is the boundary of the Siegel domain in $\mathbb C^2$ and it is CR-diffeomorphic to the first Heisenberg group, from which it inherits the group structure. However,  for $k\geq2$, $\partial \Omega_k$ does not have a group structure.
In fact, $X_1$ and $X_2$ and their Lie bracket $[X_1,X_2]$ do not generate the tangent space at every point.
In their acclaimed paper \cite{RS}, Rothschild and Stein
proved that any family of vector fields satisfying bracket generating property (\cite {CHOW, H}) can be lifted to vector fields spanning the horizontal space of a free Lie group, by adding new variables and using an approximation similar to the Euclidean approximation of differentiable manifolds. This enabled them to use geometric and analytic tools from Lie group theory in the study of 
differential operators like sum of squares of vector fields. 
In the case of our domains $ \partial\Omega_k$, $k\geq2$, Theorem 4 in \cite{RS} implies that $X_1$ and $X_2$ can be lifted to vector fields $\tilde X_1$ and $\tilde X_2$ on a higher dimensional manifold in such a way that they become free up to step $2k$ (note that $X_1$, $X_2$ and their iterated brackets of order $2k$ span the tangent space at every point). 
Moreover, Theorem 5 in \cite{RS} implies that the higher dimensional manifold
can be approximated by the free nilpotent Lie group $N$ with $2$ generators and step $2k$, in such a way  that 
$$ \tilde X_j= Y_j+R_j,\quad j=1,2, $$
where $Y_1,Y_2$ form a basis of left-invariant vector fields
generating the Lie algebra of $N$, and $R_1,R_2$ are remainders that satisfy certain conditions with respect to a suitable notion of degree.
Inspired by Rothschild and Stein's results, in our first main result we characterize explicitly the lifts of $X_1$ and $X_2$. It turns out that we may lift directly to a nilpotent Lie group $G$ of step $2k$, $k\geq2$, with dimension smaller than the dimension of the free Lie group.
Moreover,  we have that $R_j=0$, $j=1,2$. Furthermore, the method to prove our results in section 3 provides an algorithm to construct the lifting vector fields for every fixed $k$.
We now state the main result of our paper.
 
\begin{theorem}\label{main 1 Ot}
Suppose $k\geq2$. Let $\mathfrak{g}$ be the Lie algebra generated by $X_1$ and $X_2$ under the assumption $x_1^2+x_2^2\neq 0$. Let $G$ be the connected and simply connected Lie group with Lie algebra $\mathfrak{g}$, and let $n={\rm dim}(\mathfrak{g})$.
The left-invariant vector fields on $G$ 
have the form
$$
\tilde{X}_1 =  \partial_{x_1}+ \sum_{j=3}^{n-1} p_j(x_1,x_2)\partial_{x_j}       +x_2(x_1^2+x_2^2)^{k-1} \partial_{x_n},
$$
and 
$$
\tilde{X}_2 =\partial_{x_2}+ \sum_{\ell=3}^{n-1} q_\ell(x_1,x_2)\partial_{x_\ell}  -x_1(x_1^2+x_2^2)^{k-1} \partial_{x_n}
$$
for some polynomials $p_j$, $q_\ell$, and for $j,\ell=3,\dots,n-1$.
\end{theorem}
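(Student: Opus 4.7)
The plan is first to describe $\mathfrak{g}$ explicitly, then realize the simply connected group $G$ on $\mathbb R^n$ via a carefully chosen coordinate system, and finally compute the left-invariant vector fields and verify the stated form. To analyze $\mathfrak{g}$, I start with the direct calculation $[X_1, X_2] = -2k(x_1^2+x_2^2)^{k-1}\partial_t$ and observe by induction on bracket length that every iterated bracket of $X_1, X_2$ of length at least two has the form $f(x_1,x_2)\partial_t$ for some polynomial $f$. Since neither $X_1$ nor $X_2$ has a $\partial_t$-dependent part, a short computation shows that bracketing such an element with $X_1$ or $X_2$ produces exactly $(\partial_{x_1} f)\partial_t$ or $(\partial_{x_2} f)\partial_t$, which decreases $\deg f$ by one. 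Starting from degree $2(k-1)$, the process therefore terminates in finitely many steps, so $\mathfrak{g}$ is nilpotent of step at most $2k$ and finite-dimensional. I would fix a basis $Z_1 = X_1$, $Z_2 = X_2$, $Z_3,\dots,Z_{n-1}$ (polynomial multiples of $\partial_t$ of increasing stratification level), and $Z_n = \partial_t$.

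By nilpotency, there is a unique connected simply connected Lie group $G$ with Lie algebra $\mathfrak{g}$, diffeomorphic to $\mathbb R^n$ via $\exp$. The key step is to introduce global coordinates $(x_1,\dots,x_n)$ on $G$ by an inductive lifting procedure --- the algorithm referenced in the introduction --- that starts from the coordinates $(x_1,x_2,t)$ on $\partial\Omega_k$ and attaches each new variable $x_j$, for $3\le j\le n-1$, to the basis element $Z_j$, so that the natural projection $\pi\colon G\to\partial\Omega_k$, $(x_1,\dots,x_n)\mapsto(x_1,x_2,x_n)$, intertwines $\tilde X_j$ and $X_j$ for $j=1,2$. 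With coordinates in hand I would compute the left-invariant extensions and read off the components: the vanishing of the $\partial_{x_2}$-coefficient of $\tilde X_1$ (resp.\ of the $\partial_{x_1}$-coefficient of $\tilde X_2$) follows from the adapted choice of coordinates making $x_1,x_2$ infinitesimal parameters along $Z_1,Z_2$ in independent directions, while the identity $d\pi(\tilde X_1) = X_1$ forces the $\partial_{x_n}$-coefficient of $\tilde X_1$ to equal exactly $x_2(x_1^2+x_2^2)^{k-1}$, and similarly $-x_1(x_1^2+x_2^2)^{k-1}$ for $\tilde X_2$.

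The main obstacle is engineering the coordinates so that the intermediate coefficients $p_j, q_\ell$ depend only on $(x_1,x_2)$ and not on the auxiliary variables $x_3,\dots,x_{n-1}$. This is stronger than what one obtains in standard exponential coordinates of the first kind, where the Baker--Campbell--Hausdorff expansion typically produces coefficients involving all lower-layer variables (a direct computation already reveals such contamination in the case $k=2$). Achieving the sharper triangular dependence requires exploiting the very specific form of the bracket relations in $\mathfrak{g}$: since each $Z_j$ with $j\ge 3$ is a polynomial in $(x_1,x_2)$ times $\partial_t$, so that the abelian ideal $[\mathfrak g,\mathfrak g]$ sits in a single direction with polynomial coefficients, an ordered exponential chart adapted to the basis in a suitable sequence can be designed so that the polynomial dependence propagates cleanly to the left-invariant vector fields. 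Carrying out this constructive coordinate choice rigorously, and checking by induction on $j$ that no dependence on the new variables is introduced, is the essential technical content of Section~3.
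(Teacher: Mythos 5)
Your structural analysis of $\mathfrak g$ is correct and matches the paper's starting point: $[X_1,X_2]=-2k(x_1^2+x_2^2)^{k-1}\partial_t$, every higher bracket is $f(x_1,x_2)\partial_t$ with bracketing by $X_1,X_2$ acting as $\partial_{x_1},\partial_{x_2}$ on $f$, hence $\mathfrak g$ is finite dimensional, stratified of step $2k$, with all layers $\mathfrak g_j$, $j\geq 2$, sitting in the single $\partial_t$-direction (so that $[\mathfrak g,\mathfrak g]$ is abelian and $\partial_t$ is central). However, from that point on your argument defers exactly the part that constitutes the theorem. You posit coordinates ``so that the natural projection $\pi(x_1,\dots,x_n)=(x_1,x_2,x_n)$ intertwines $\tilde X_j$ and $X_j$'' and then say that $d\pi(\tilde X_1)=X_1$ \emph{forces} the $\partial_{x_n}$-coefficient to be $x_2(x_1^2+x_2^2)^{k-1}$. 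This is circular: the existence of a chart in which the left-invariant fields project onto $X_1,X_2$ (equivalently, in which their $\partial_{x_1},\partial_{x_2},\partial_{x_n}$-components are independent of $x_3,\dots,x_{n-1}$ and equal to the coefficients of $X_1,X_2$) is precisely the assertion to be proved; in the paper this intertwining is a \emph{consequence} of the explicit formula (Remark~\ref{constantOnCosets}), not an input. Likewise, your final paragraph openly states that designing the ordered exponential chart and checking by induction that no dependence on $x_3,\dots,x_{n-1}$ appears ``is the essential technical content''--- i.e., it is not carried out.

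Concretely, what is missing is the following. The paper fixes the chart $\Phi({\boldsymbol \xi})=\exp\big(\sum_{j=3}^{N-1}x_je_j\big)\exp(x_1e_1+x_2e_2)\exp(x_Ne_N)$ (the order matters: putting the $\exp(x_1e_1+x_2e_2)$ factor to the left of the middle layers, or using first-kind coordinates, does produce contamination by the auxiliary variables). Right translation by $\exp(\tau e_1)$ then reduces the theorem to two facts: (a) the conjugation factor $\exp(x_1e_1+x_2e_2)\exp(\tau e_1)\exp((-x_1-\tau)e_1-x_2e_2)$ lies in $\exp\big(\bigoplus_{j\geq 2}\mathfrak g_j\big)$ and depends only on $(x_1,x_2,\tau)$, which, together with $\bigoplus_{j=2}^{2k-1}\mathfrak g_j$ being a subalgebra (here abelian) and $\mathfrak g_{2k}$ central, yields that $p_j,q_\ell$ depend only on $(x_1,x_2)$; and (b) the $e_N$-component of that factor is $\tau\,x_2(x_1^2+x_2^2)^{k-1}+O(\tau^2)$, which the paper proves by the derivative-of-product formula ($\sum_{\ell=0}^{2k-1}\big(\tfrac1{\ell!}-\tfrac1{(\ell+1)!}\big)\mathrm{ad}^\ell(x_1e_1+x_2e_2)(e_1)$) combined with the combinatorial bracket identity $\mathrm{ad}^{2j}e_1\circ\mathrm{ad}^{2k-2-2j}e_2([e_2,e_1])=2k\binom{k-1}{j}(2j)!(2k-2-2j)!\,e_N$. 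Your proposal contains neither the specification of the ordering nor this computation, so it identifies the right ingredients (second-kind coordinates, abelian derived algebra, single central direction) but does not yet prove the statement.
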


We stress that, in the case of the manifolds considered in this paper, our construction does not rely on \cite{RS}. In fact, we improve those results in two ways. 

First, the lifts in \cite{RS}  live in a free Lie algebra, which is the biggest possible nilpotent Lie algebra once a number of generators and the step are fixed. In our case, this is the free Lie algebra with two generators and step $2k$, $k\geq2$. Instead, we consider a Lie algebra with dimension given by the highest number of independent vectors that $X_1$ and $X_2$ generate, as we consider their brackets in different points. When $k=2$, the dimension of our model is $6$ against $8$ for   \cite{RS}, and
as $k$ increases our model remains considerably smaller. 

Second, in our case study, the lifts $\tilde{X}_j$ are left-invariant vector fields on a Lie group, whereas in \cite{RS}, they differ from left-invariant vector fields by the error terms $R_j$.  This remarkably reduce the difficulty when we will consider the explicit Taylor expansions of functions on $\partial \Omega_k$, $k\geq2$, via the vector fields $X_1$ and $X_2$.

We now provide the Taylor expansion as follows (for the sake of simplicity, we only state the result with remainder of second order).
\begin{theorem}\label{TaylorForOmega for Intro}
There exist positive constants $c$ and $C$ such that for all twice differentiable functions $f$  on $\partial\Omega_k$, $k\geq2$,
$$
| f({\bf y}) - P_{\bf x}({{\bf y}-{\bf x}}) | \leq C d_{cc}({\bf x},{{\bf y}})^2 \sup_{d_{cc}({\bf x},{{\bf z}})\leq c\,d_{cc}({\bf x},{{\bf y}}), i,j=1,2} | {X}_i {X}_j f({{\bf z}}) |,
$$
for all ${\bf x},{{\bf y}}\in \partial\Omega_k$, and with
$P_{\bf x}({\bf y}-{\bf x}) = f({\bf x})+({y}_1-{x}_1)X_1f({\bf x})+({y}_2-{x}_2)X_2f({\bf x})$. Here $d_{cc}$ represents the Carnot--Carath\'eodory distance on $\partial\Omega_k$. 
\end{theorem}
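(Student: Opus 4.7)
The plan is to reduce the statement to the well-known Taylor expansion with remainder on connected, simply connected stratified nilpotent Lie groups (Folland--Stein, Bonfiglioli--Lanconelli--Uguzzoni) by invoking Theorem~\ref{main 1 Ot} to lift $f$, $X_1$ and $X_2$ to the Lie group $G$, and then to project the resulting inequality back to $\partial\Omega_k$.

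I first set up the lift. Write a generic point of $G$ in the coordinates of Theorem~\ref{main 1 Ot} and let $\pi\colon G\to\partial\Omega_k$ denote the submersion $(x_1,\dots,x_n)\mapsto(x_1,x_2,x_n)$, consistent with parametrising $\partial\Omega_k$ by $(x_1,x_2,t)$ with $t=x_n$. The explicit triangular form of $\tilde X_j$ in Theorem~\ref{main 1 Ot} gives $d\pi(\tilde X_j)=X_j$ for $j=1,2$. Setting $\tilde f:=f\circ\pi$, a direct computation yields $\tilde X_i\tilde f=(X_if)\circ\pi$ and $\tilde X_i\tilde X_j\tilde f=(X_iX_jf)\circ\pi$, since $\tilde f$ is independent of the additional coordinates $x_3,\dots,x_{n-1}$ appearing in $\tilde X_j$. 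Because $\tilde X_j$ projects onto $X_j$, every subunit curve in $G$ projects to a subunit curve in $\partial\Omega_k$ of equal length, and every subunit curve in $\partial\Omega_k$ admits a horizontal lift of equal length starting at any prescribed point in $\pi^{-1}(\mathbf x)$. Consequently, for any $\tilde{\mathbf x}\in\pi^{-1}(\mathbf x)$ one can choose $\tilde{\mathbf y}\in\pi^{-1}(\mathbf y)$ with $d_G(\tilde{\mathbf x},\tilde{\mathbf y})=d_{cc}(\mathbf x,\mathbf y)$, and the $d_G$-ball of radius $c\,d_G(\tilde{\mathbf x},\tilde{\mathbf y})$ about $\tilde{\mathbf x}$ is mapped by $\pi$ into the $d_{cc}$-ball of the same radius about $\mathbf x$.

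Next I apply the standard left-invariant Taylor formula of order two on the stratified nilpotent Lie group $G$: there exist constants $C,c>0$ such that, for every twice-differentiable $h$ on $G$ and every $\tilde{\mathbf x},\tilde{\mathbf y}\in G$,
\[
\bigl|h(\tilde{\mathbf y})-h(\tilde{\mathbf x})-\eta_1\tilde X_1 h(\tilde{\mathbf x})-\eta_2\tilde X_2 h(\tilde{\mathbf x})\bigr|\le C\,d_G(\tilde{\mathbf x},\tilde{\mathbf y})^2\sup_{\substack{d_G(\tilde{\mathbf x},\tilde{\mathbf z})\le c\,d_G(\tilde{\mathbf x},\tilde{\mathbf y})\\ i,j=1,2}}\bigl|\tilde X_i\tilde X_j h(\tilde{\mathbf z})\bigr|,
\]
where $(\eta_1,\eta_2)$ are the first-stratum components of $\tilde{\mathbf x}^{-1}\tilde{\mathbf y}$ in the graded coordinates of Theorem~\ref{main 1 Ot}. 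The Baker--Campbell--Hausdorff formula, combined with the fact that the variables $x_3,\dots,x_n$ carry homogeneous weight strictly greater than $1$ in the grading, forces $\eta_i=\tilde y_i-\tilde x_i$ for $i=1,2$; since $\pi$ is the identity on the first two coordinates, $\eta_i=y_i-x_i$. Applying the above to $h=\tilde f$ at the pair $(\tilde{\mathbf x},\tilde{\mathbf y})$ supplied by the previous paragraph, and using $\tilde X_i\tilde X_j\tilde f(\tilde{\mathbf z})=X_iX_jf(\pi(\tilde{\mathbf z}))$ to pass to $\partial\Omega_k$, yields exactly the claimed inequality.

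The main delicate point is the matching of the first-order terms between the two sides of the lift: the Taylor polynomial on $G$ is naturally expressed in the Lie-theoretic coordinates $(\eta_1,\eta_2)$ of the group difference $\tilde{\mathbf x}^{-1}\tilde{\mathbf y}$, whereas $P_{\mathbf x}(\mathbf y-\mathbf x)$ in the statement uses the plain Euclidean differences $y_i-x_i$. Ensuring that these coincide relies crucially on the \emph{exact} left-invariance of the lift, i.e.\ on the fact that the Rothschild--Stein remainders satisfy $R_j=0$ in Theorem~\ref{main 1 Ot}, together with the graded triangular form of $\tilde X_1,\tilde X_2$; using instead the generic Rothschild--Stein lift would introduce a nontrivial polynomial correction and spoil the Euclidean form of the first-order Taylor polynomial.
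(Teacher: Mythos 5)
Your argument is correct and follows the same overall architecture as the paper's proof of Theorem~\ref{TaylorForOmega}: lift $f$ to a function on $G$ that is constant in the extra variables (the paper's $F=f\circ\Theta^{-1}$ is exactly your $\tilde f=f\circ\pi$, cf.\ Remark~\ref{constantOnCosets}(i)), apply the stratified Folland--Stein Taylor inequality on $G$ (the paper's Lemma~\ref{TaylorForG}, left-translated), match the first-order coefficients with the Euclidean differences $y_i-x_i$ via the grading (the paper's Lemma~\ref{Qj}(i)), and compare the two Carnot--Carath\'eodory distances. The one genuine difference is in the last step. The paper proves an exact pointwise isometry along the zero-section, $\tilde d_{cc}(\Theta({\bf x}),\Theta({\bf y}))=d_{cc}({\bf x},{\bf y})$ (Proposition~\ref{metric equivalence}), and applies the group inequality at the pair $(\Theta({\bf x}),\Theta({\bf y}))$; you instead apply it at $(\tilde{\bf x},\tilde{\bf y})$ with $\tilde{\bf y}$ an arbitrary point of the fibre $\pi^{-1}({\bf y})$ reached by a horizontal lift, so you only need the two easy facts that $\pi$ is $1$-Lipschitz for the CC distances and that subunit curves lift with equal length, i.e.\ $d_{cc}({\bf x},{\bf y})=\inf_{\pi(\tilde{\bf y})={\bf y}}d_G(\tilde{\bf x},\tilde{\bf y})$. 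This is arguably more robust: the delicate direction of Proposition~\ref{metric equivalence} requires the endpoint of the lifted curve to be the specific point $\Theta({\bf y})$ with vanishing middle coordinates, whereas in your scheme the middle coordinates of $\tilde{\bf y}$ are irrelevant because $\tilde f$, $\tilde X_i\tilde X_j\tilde f$ and the first two coordinates are all fibre-constant. Two small points to tidy up: the exact equality $d_G(\tilde{\bf x},\tilde{\bf y})=d_{cc}({\bf x},{\bf y})$ presupposes the existence of a length-minimizing horizontal curve on $\partial\Omega_k$; you only need ``$\leq$'', which follows from lifting $\varepsilon$-almost-minimizers at the cost of enlarging the constant $c$ (or of a limiting argument). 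And you should note explicitly that $\tilde X_j\tilde f=(X_jf)\circ\pi$ is again fibre-constant, so the identity $\tilde X_i\tilde X_j\tilde f=(X_iX_jf)\circ\pi$ can be iterated; this is exactly what makes the sup over the $d_G$-ball project into the sup over the $d_{cc}$-ball.
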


A direct application of Theorems \ref{main 1 Ot} and \ref{TaylorForOmega for Intro} is the Schatten class estimates for the commutator $[b,{\bf S}]$ of the Cauchy--Szeg\"o projection ${\bf S}$ on $\partial \Omega_k$.  We recall the definition of the Schatten class $S^{p}$. Note that for any compact operator $T$ on $L^{2}$,  $T^{*}T$
 is compact, symmetric and  positive. It is  diagonalizable. For $0<p<\infty$, we say that $T\in S^{p}$ if $\{\lambda_{n}\}\in \ell^{p}$, where $\{\lambda_{n}\}$ is the sequence of eigenvalues of $\sqrt{T^{*}T}$ (counted according to multiplicity).

The Schatten class  plays an important role in non-commutative analysis and complex analysis (\cite{JW, GG, LMSZ, RochSemm, LZ, Zhu}). For instance, from a spectral-theoretic perspective, the (weak) Schatten norm estimate for the commutator of a zeroth order operator with a H\"older continuous function is a first step towards showing spectral asymptotics for Hankel operators with symbols of low regularity (\cite{GG}).
Feldman and Rochberg \cite{FR} first proved  the Schatten class estimate of commutator  $[b,\mathcal C]$ of the Cauchy--Szeg\"o projection  $\mathcal C$ on the Heisenberg group $\mathbb H^n$  by using  the Cayley transform and Fourier transform.
Recently, Fan, Lacey and Li \cite{FLL} recovered this result on $\mathbb H^n$ 
via establishing a different machinery that uses Haar representation and dyadic harmonic analysis, with an approach relying on a particular dyadic 
system of cubes on $\mathbb H^n$ which is preserved under left translation. On the other hand, Lacey, Li and Wick \cite{LLW1,LLW2} developed the analytic tools to study the Schatten class in two weight settings and paraproducts in the Euclidean setting.

Returning to the model domains $\partial \Omega_k$, it is clear that the Cayley or Fourier transforms  do not apply.   Moreover, there is no group structure on $\partial \Omega_k$ for $k\geq2$. Thus, the result or proof in \cite{FLL, RochSemm} do not apply directly. 
Consider the triple $(\partial\Omega_k, d,\mu)$, where $d$ is the quasi-metric introduced in \cite{BGGr3} and $\mu$ is the Lebesgue measure on $\mathbb C\times \mathbb R$.

Our Theorems \ref{main 1 Ot} and \ref{TaylorForOmega for Intro}, together with the explicit  kernel of the Cauchy--Szeg\"o projection ${\bf S}$ (obtained in \cite{CLTW}) and the implementation of ideas of harmonic analysis techniques developed in \cite{FLL,LLW1,LLW2, RochSemm}, yield
\begin{theorem}\label{main thm} Suppose $k\geq2$. 
\begin{enumerate}
\item  Suppose that $4<p<\infty$ and $b\in  {\rm VMO}(\partial\Omega_k)$. Then   $[b,{\bf S}]\in S^p$
if and only if
 $b\in B_{p}(\partial\Omega_k)$, moreover, we have that $\|b\|_{B_{p}(\partial\Omega_k)}\approx \|[b,{\bf S}]\|_{S^p}$;

\item Suppose  $0<p\leq 4$ and $b\in C^2(\partial \Omega_k) \cap {\rm VMO}(\partial\Omega_k)$. Then $[b,{\bf S}]\in S^p$
if and only if $b$ is a constant.

\end{enumerate}
\end{theorem}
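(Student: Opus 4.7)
The plan is to transplant the Haar/paraproduct commutator machinery of \cite{FLL,LLW1,LLW2,RochSemm} from the Heisenberg group to $\partial\Omega_k$, replacing translation invariance by the exact lifting of Theorem \ref{main 1 Ot} and smoothness of the kernel by the Taylor control of Theorem \ref{TaylorForOmega for Intro}. As a preliminary I would fix a Christ-type system of dyadic cubes and an adapted Haar basis on the space of homogeneous type $(\partial\Omega_k,d,\mu)$, and I would extract from the explicit Cauchy--Szeg\"o kernel of \cite{CLTW} the standard Calder\'on--Zygmund size bound $|K({\mathbf x},{\mathbf y})|\lesssim \mu(B({\mathbf x},d({\mathbf x},{\mathbf y})))^{-1}$ together with the first-order smoothness estimate
\[
|K({\mathbf x},{\mathbf y})-K({\mathbf x}',{\mathbf y})|\lesssim \frac{d({\mathbf x},{\mathbf x}')}{d({\mathbf x},{\mathbf y})}\,\mu(B({\mathbf x},d({\mathbf x},{\mathbf y})))^{-1}, \qquad 2\,d({\mathbf x},{\mathbf x}')\le d({\mathbf x},{\mathbf y}),
\]
derived by applying Theorem \ref{TaylorForOmega for Intro} to ${\mathbf x}\mapsto K({\mathbf x},{\mathbf y})$ and estimating $X_1 K$ and $X_2 K$ from the explicit formula.

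For the upper bound in part (1), I would expand $b$ against the Haar basis and decompose $[b,{\bf S}]$ into paraproduct blocks indexed by pairs of Christ cubes. Summing the Schatten quasi-norms of these blocks via the almost-orthogonality lemma of \cite{LLW1,LLW2} and the kernel estimates above yields $\|[b,{\bf S}]\|_{S^p}\lesssim \|b\|_{B_p(\partial\Omega_k)}$. For the converse implication, I would adapt the \emph{nearly weakly localized} testing strategy of Lacey--Li--Wick: select pairs of Christ cubes $Q,Q'$ at comparable scale whose centers lie at distance comparable to that scale, on which $K$ is essentially constant and nonzero, obtain rank-one lower bounds
\[
\bigl\|\mathbf{1}_{Q'}[b,{\bf S}]\mathbf{1}_Q\bigr\|_{S^p}\gtrsim |b_Q-b_{Q'}|\,\mu(Q)\,|K({\mathbf x}_Q,{\mathbf x}_{Q'})|,
\]
and assemble these contributions in $\ell^p$ to recover the Besov norm; the VMO hypothesis is what discards the non-compact obstructions.

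The hardest part is the rigidity in (2). My plan is to argue by contradiction: assume $b\in C^2\cap {\rm VMO}(\partial\Omega_k)$ and $[b,{\bf S}]\in S^p$ for some $p\le 4$. Applying Theorem \ref{TaylorForOmega for Intro} around an arbitrary point ${\mathbf x}_0$, the local Schatten contribution of $[b,{\bf S}]$ at small scales is governed by the linear model
\[
(X_1 b)({\mathbf x}_0)\,[y_1,{\bf S}]+(X_2 b)({\mathbf x}_0)\,[y_2,{\bf S}],
\]
with a quadratic remainder that is Schatten negligible. Lifting this linear model via Theorem \ref{main 1 Ot} to the left-invariant setting on $G$, where $p=4$ is the critical Schatten exponent for ${\bf S}$ in the sense of \cite{FR,FLL}, one obtains a Weyl/Fan-type dimensional lower bound forcing $X_1 b({\mathbf x}_0)=X_2 b({\mathbf x}_0)=0$; bracket generation then gives $b$ constant. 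The principal obstruction throughout is the absence of a group structure on $\partial\Omega_k$ for $k\ge 2$, and the combined use of Theorem \ref{main 1 Ot} (with no remainder $R_j$) and Theorem \ref{TaylorForOmega for Intro} is exactly what enables the transfer; the delicate point is ensuring the critical-index Schatten lower bound survives this transfer without loss of sharpness.
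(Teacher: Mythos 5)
For part (1) your sufficiency route (Haar expansion plus paraproduct blocks) is genuinely different from the paper, which instead applies the Janson--Wolff mixed-norm lemma to the kernel $(b(\mathbf x)-b(\mathbf y))S(\mathbf x,\mathbf y)$ and interpolates between weak Schatten classes; that part could in principle be made to work, but the kernel regularity you posit is not available: Theorem \ref{thm CLTW} gives only the H\"older exponent $\tfrac{1}{2k+2}$ in the quasi-metric $d$, and any regularity obtained from Theorem \ref{TaylorForOmega for Intro} is in $d_{cc}$, which converts to $d$ with exponent at most $\tfrac14$, never $1$. The real gap is in your necessity argument. The lower bound $\|\mathbf 1_{Q'}[b,\mathbf S]\mathbf 1_Q\|_{S^p}\gtrsim |b_Q-b_{Q'}|\,\mu(Q)\,|S(\mathbf x_Q,\mathbf x_{Q'})|$ does not follow from ``$K$ essentially constant and nonzero'': the Cauchy--Szeg\"o kernel is complex with only $|S|=1/d$, so one must first pass to $\mathrm{Re}\,S$ or $\mathrm{Im}\,S$ and prove it keeps a fixed sign and size $\gtrsim 1/|Q|$ on $Q\times\hat Q$ (the paper's Lemma \ref{lemA4}, which requires the H\"older estimate and a quantitative choice of the separation constant); moreover, even with a sign-definite kernel, testing against full indicators fails, both because the error coming from the kernel's oscillation is of size $\iint|b(\mathbf x)-b(\mathbf y)|$ rather than of the size of the main term, and because $\sum_Q|b_Q-b_{Q'}|^p$ does not dominate $\|b\|_{B_p(\partial\Omega_k)}$ (equal averages on $Q$ and $Q'$ are compatible with large Haar coefficients, i.e.\ large intra-cube oscillation). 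The paper resolves this by bounding the Haar coefficients $|\langle b,h^\epsilon_Q\rangle|$ through a median value $m_b(\hat Q)$ and the sign-definite sets $E^Q_s\times F^{\hat Q}_s$, feeding the resulting pairings into the Rochberg--Semmes NWO bound (Lemma \ref{eq-NWO}), and then invoking the dyadic-to-continuous Besov equivalence (Lemma \ref{lemA1}); some version of these three steps is indispensable.

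Part (2) as proposed would fail. The lifting in Theorem \ref{main 1 Ot} transports the vector fields $X_1,X_2$, not the operator: there is no left-invariant avatar of $\mathbf S$ on $G$ to which the results of \cite{FR,FLL} apply, and even if there were, the critical Schatten exponent in those results is the homogeneous dimension ($2n+2$ on $\mathbb H^n$, and $\mathcal Q\gg 4$ for $G$), not $4$; the threshold $4$ in the theorem is a metric-measure feature of $(\partial\Omega_k,d,\mu)$ itself, namely $\mu(B(\mathbf x,r))\approx r$ combined with $d\approx d_{cc}^4$ at points with $|z|\approx1$. Your reduction to the linear model also presumes that the commutator with the quadratic Taylor remainder is negligible in $S^p$, which is unjustified (and unnecessary). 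The paper never decomposes the operator: from $[b,\mathbf S]\in S^4$ it extracts, by the same NWO testing as in part (1), an $\ell^4$ bound on martingale-difference oscillations of $b$ (Lemma \ref{lem approximation 2}); the Taylor expansion enters only through Lemma \ref{lowerbound}, which gives $|b_{Q'}-b_{Q''}|\gtrsim\ell_{d_{cc}}(Q)\,|\nabla b(\mathbf x_0)|$ on small cubes near a point where $\nabla b\neq0$; and the contradiction comes from counting: at level $\kappa$ there are about $\varepsilon 2^{\kappa}$ such cubes, each of measure $\approx 2^{-\kappa}$ and $d_{cc}$-sidelength $\approx 2^{-\kappa/4}$, so the $\ell^4$ sum diverges --- this is exactly where $p\le4$ is used. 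The convolution on $G$ (the translates $b^{\mathbf y}$ and mollifications $b_\varepsilon$) serves only to relax the smoothness hypothesis on $b$, not to transfer any Schatten estimate to the group. Without this counting mechanism your plan has no route to the critical index $4$.
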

Here ${\rm VMO}(\partial\Omega_k)$ represents the standard VMO space on the space of homogeneous type $(\partial\Omega_k,d,\mu)$, and 
$B_p(\partial\Omega_k)$ is the Besov space on $\partial\Omega_k$, defined as the functions in $f\in L_{\rm loc}^{1}(\partial\Omega_k)$ such that 
\begin{align}\label{Besov norm}
\|f\|_{B_{p}(\partial\Omega_k)}=\left\{\int_{\partial\Omega_k}\int_{\partial\Omega_k}\frac{|f( \mathbf x  )-f(\mathbf y)|^p}{d(\mathbf x,\mathbf y)^{2}}d\mu(\mathbf x) d\mu(\mathbf y)\right\}^{1\over p}<\infty.
\end{align}
We note that the condition $b\in C^2(\partial \Omega_k) \cap {\rm VMO}(\partial\Omega_k)$ in (2) of Theorem~\ref{main thm}  can be weakened via the lifting and exponential map. Full details will be provided in Section 6.

We compare Theorem~\ref{main thm}  with  previous closely related results \cite{FLL, FR, JW, LLW1, LLW2, RochSemm}. The underlying space and the singular integrals in those results are homogeneous in the sense that the measure of the ball is comparable
to its radius  to the power $n$, i.e., $\mu(B(x,r))\approx r^n$, and the size of the kernel $K(x,y)$ is comparable to $d(x,y)^{-n}$. Hence, in earlier results, the critical index $p$, below which the Besov space contains only constants, equals the homogeneous dimension $n$. 
In our Theorem \ref{main thm}, the underlying space  $(\partial\Omega_k, d,\mu)$ does not have a homogeneous dimension. Rather, the measure of the ball is controlled by $r^{2k+2}$ from above (upper dimension) and by $r^4$ from below (lower dimension). We show that the critical index is the lower dimension $4$.

We stress that our lifting theorem with the consequent result on Taylor polynomials  has further applications to the multiparameter settings. Namely, to the Shilov boundaries in $\mathbb C^{2}\times\cdots\times \mathbb C^{2}$ studied by Nagel--Stein in \cite{NS2004}, which encompass a collection of $n$ model domains $\widetilde M=\partial\Omega_{k_1}\times\cdots\times \partial\Omega_{k_n}$, $k_i\geq2$, $i=1,\ldots,n$.  We will study this case in a forthcoming paper.

\medskip
\section{\bf Preliminaries on the model domain $(\partial\Omega_k, d,\mu)$}

Suppose $k\geq2$. Consider
\begin{align*}
  \partial\Omega_k&:=\left\{ (z,w)\in\mathbb C^2:\  \im(w)= \frac{1}{2k}|z|^{2k} \right\}
\end{align*}
with $z=x_1+ix_2$ and $t=\re(w)$. For points in $ \partial\Omega_k$ we will interchange the several notations ${\bf x}=(z,t)=(x_1,x_2,t)$. We will also need to embed $\partial\Omega_k$ into $\mathbb R^N$ for a suitable $N$. Hence, in some instances the points will be represented by ${\bf x}=(x_1,x_2,x_N)$.

We now recall the quasi-distance $d$ on $\partial\Omega_k$ (\cite{BGGr3}) , $k\geq2$, as follows: for every $ (z, t),(\bw, s) \in \partial \Omega_k$,
\begin{align}\label{dist}
d(\color{black}{ (z, t),(\bw, s) }):=h^2(\color{black}{ (z, t),(\bw, s) })\rho^{2-2k}(\color{black}{ (z, t),(\bw, s) }),
\end{align}
where
\begin{align}\label{rho}
\rho(\color{black}{ (z, t),(\bw, s) ):=  |z|+|\bw|+|\sigma|^{1\over2k}\approx  |z|+|\bw|+|t-s|^{1\over2k}
},
\end{align}
and
\begin{align}\label{function h}
h(\color{black}{ (z, t),(\bw, s) })=|z-\bw|^2\rho^{2k-2}(\color{black}{ (z, t),(\bw, s) })+|\sigma(\color{black}{ (z, t),(\bw, s) })|
\end{align}
with
$$ \sigma( (z, t),(\bw, s) ) =  t-s+2\im (z^k\overline{\bw}^k).$$
Based on Proposition 9.6 in \cite{BGGr3}, we see that this quasi-metric $d$ satisfies:
\begin{equation}\label{cd}
 d((z, t),(z', t')) \leq C_d\left( d((z, t),(\bw, s))+ d((\bw, s),(z', t'))\right)
 \end{equation}
with some constant $C_d>1$.

Define the $d$-metric ball as 
$$B\left((z,t),r\right)=\{(w,s): d((z,t),(\bw, s))<r\},\quad (z,t)\in \mathbb C\times\mathbb R.$$
According to \cite[Proposition 9.8] {BGGr3}, the measure of the ball $B\left((z,t),r\right)$ satisfies
\begin{equation}\label{mball}
\mu(B\left((z,t),r\right))\approx r.
\end{equation}
Thus, $(\partial\Omega_k, d,\mu)$ forms a space of homogeneous type in the sense of Coifman and Weiss. For notational convenience, we use $|B|$ to denote the measure of $B$, that is,
$$ |B\left((z,t),r\right)|=\mu(B\left((z,t),r\right)). $$ 
Furthermore, we will simply write  $d{\bf x}$ and $d{\bf y}$ instead of $d\mu(\mathbf x)$ and $d\mu(\mathbf y)$.

\subsection{\bf The Cauchy--Szeg\"o kernel on $\partial\Omega_k$}

We recall that in the very recent result of Chang, Li, Tie and Wu \cite{CLTW}, they 
obtained the explicit pointwise size estimate and regularity estimate of the Cauchy--Szeg\"o kernel on $\partial\Omega_k$.

\begin{theorem}[\cite{CLTW}]\label{thm CLTW}
For $(z,t)$ and $(\bw, s)$ in $\partial \Omega_k$ with $(z,t)\not=(\bw, s)$,  the Cauchy--Szeg\"o projection associated with the kernel  $S(z,t;\bw, s)$ is a Calder\'on--Zygmund operator on $(\partial\Omega_k, d,\mu)$, i.e.,
\begin{align}\label{k-size1}
|S(z,t;\bw, s)|
={1\over d((z,t),(\bw, s))}. 
\end{align}
For $(z,t)\not=(\bw_0, s_0)$ and
for $ d((\bw_1, s_1),  (\bw_0, s_0) )\leq  c d((z,t),(\bw_0, s_0)),$
with some small $c$,
\begin{align} \label{k-size2}
|S(z,t;\bw_1, s_1)-S(z,t;\bw_0, s_0)|
 \leq C_1  { 1\over d((z,t),(\bw_0, s_0)) } \ \bigg({d( (\bw_1,s_1),(\bw_0,s_0) )\over d((z,t),(\bw_0, s_0))} \bigg) ^{1\over2k+2}  
 \end{align}
with some constant $C_1>0$.
\end{theorem}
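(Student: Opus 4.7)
The plan is to first derive an explicit integral representation of the kernel $S(z,t;\bw,s)$ by exploiting the $t$-translation symmetry of $\partial\Omega_k$, and then to extract both the size estimate \eqref{k-size1} and the regularity estimate \eqref{k-size2} by careful analysis of this representation.

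The first step would be to decompose the Hardy space $H^2(\Omega_k)$ via partial Fourier transform in the $t$-variable. For each frequency $\tau>0$, the slice problem becomes a weighted Bergman-type problem on $\mathbb{C}$ with weight $e^{-\tau |z|^{2k}/k}$, coming from the defining function $\operatorname{Im} w=|z|^{2k}/(2k)$ of $\Omega_k$. The reproducing kernel $K_\tau(z,\bw)$ of this weighted Bergman space can be written as a power series $\sum_{j\geq 0}(z\overline{\bw})^j/c_j(\tau)$, with $c_j(\tau)$ the moment integral $\int_{\mathbb C}|z|^{2j}e^{-\tau|z|^{2k}/k}\,|dz|^2$, which evaluates to an explicit Gamma-function value. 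Integrating against $e^{i\tau(t-s)}$ over $\tau>0$ then yields an explicit formula for $S(z,t;\bw,s)$.

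With the formula in hand, the size estimate \eqref{k-size1} would follow by splitting the $\tau$-integral according to the natural scales encoded in the quasi-metric $d$, namely $\rho((z,t),(\bw,s))$ from \eqref{rho} and $\sigma((z,t),(\bw,s))$ appearing in \eqref{function h}, and then balancing the oscillation $e^{i\tau(t-s)}$ against the decay of $K_\tau$. For the regularity estimate \eqref{k-size2}, one would bound $|S(z,t;\bw_1,s_1)-S(z,t;\bw_0,s_0)|$ via the fundamental theorem of calculus along a path from $(\bw_0,s_0)$ to $(\bw_1,s_1)$, using pointwise derivative bounds on the kernel derived from the integral formula.

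The main obstacle lies in the size analysis: producing the precise equivalence $|S|\asymp 1/d$ (rather than a weaker upper bound) requires careful scale matching between the non-elementary Gamma-function series and the oscillatory factor $e^{i\tau(t-s)}$. The H\"older exponent $1/(2k+2)$ in \eqref{k-size2} is optimal and reflects the competition between the horizontal displacement $|\bw_1-\bw_0|$ and the twisted vertical displacement $\sigma$ in the definition of $d$; extracting this exponent sharply, rather than a smaller one, requires detailed control of the derivatives of $K_\tau$ as functions of $z,\bw,\tau$.
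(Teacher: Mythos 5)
Note first that the paper does not prove this statement at all: it is quoted from \cite{CLTW}, and what the paper records alongside it is the closed-form expression $S(z,t;\bw,s)=\frac{1}{4\pi^2}A^{-\frac{k+1}{k}}(z,t;\bw,s)\,\big(1-\mathcal P(z,t;\bw,s)\big)^{-2}$ together with Lemma~\ref{lem6.1}. From these two ingredients \eqref{k-size1} (which should be read as a two-sided equivalence $\approx$ rather than a literal equality) is immediate, since $|A|\approx\rho^{2k}$, $|1-\mathcal P|\approx h/|A|$ and $d=h^{2}\rho^{2-2k}$; and \eqref{k-size2} comes from mean-value estimates on this closed form combined with \eqref{est33}, $d\lesssim\rho^{2k+2}$, which is exactly where the exponent $\frac{1}{2k+2}$ enters.

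Your slice decomposition is the right starting point, and it is essentially how the kernel is computed in \cite{CLTW}: Fourier transform in $t$, then the weighted Bergman kernel $K_\tau(z,\bw)=\sum_{j\ge 0}(z\overline{\bw})^{j}/c_j(\tau)$ with $c_j(\tau)\propto\Gamma\big(\frac{j+1}{k}\big)\tau^{-\frac{j+1}{k}}$. But you stop one step short of the decisive point: carrying out the $\tau$-integration termwise produces $\Gamma\big(\frac{j+1}{k}+1\big)$, which cancels against the $\Gamma\big(\frac{j+1}{k}\big)$ in $c_j(\tau)$ leaving the factor $\frac{j+1}{k}$, so the series collapses to $\sum_j(j+1)\mathcal P^{\,j}=(1-\mathcal P)^{-2}$ (up to normalization) and gives the closed form above. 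Consequently the ``main obstacle'' you describe --- balancing a non-elementary Gamma-function series against the oscillation $e^{i\tau(t-s)}$ by scale-splitting --- does not arise once the computation is finished; conversely, the substitute you propose is precisely the hard part and is left unexecuted: scale-splitting an oscillatory $\tau$-integral gives, at best, upper bounds, and you offer no mechanism for the lower bound needed in $|S|\approx 1/d$, nor any argument showing the H\"older exponent is $\frac{1}{2k+2}$ rather than something smaller (your ``fundamental theorem of calculus along a path'' step needs uniform derivative bounds for $K_\tau$ in $\bw,s,\tau$ and a conversion of Euclidean increments into powers of $d$, none of which is supplied). As written, the proposal identifies the correct representation but leaves both estimates essentially unproved; the fix is to sum the series first and then run the (now elementary) size and mean-value estimates through the quantities $h$, $\rho$, $A$, $\mathcal P$ controlled in Lemma~\ref{lem6.1}.
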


Let $\rho$ and $h$ be the same as in \eqref{rho} and \eqref{function h}, respectively.
We also define the auxiliary functions: 
\begin{align*}
&A(z,t;\bw, s)= {1\over 2}\big(|z|^{2k}+|\bw|^{2k} -i(t-s)\big);\quad
\P(z,t;\bw, s)={{z \overline{\bw}\over A(z,t;\bw, s)^{1\over k}}}.
\end{align*}
By Lemma 9.3 in \cite{BGGr3} and the estimate on Page 242 in \cite{BGGr4}, we have the following properties.
\begin{lemma}\label{lem6.1}
The functions $h, \rho, A$ and $\P$ satisfy
\begin{equation}\begin{split}\label{est 1}
& |z^k-\bw^k|^2\lesssim h ((z, t),(\bw, s))\lesssim \rho^{2k} ((z, t),(\bw, s)) \approx |A(z,t;\bw, s)|; \\[6pt]
& | 1-P(z,t;\bw, s) |\approx {h ((z, t),(\bw, s))\over |A(z,t;\bw, s)|},\quad (z,t), (\bw, s)\in\partial\Omega_k. 
\end{split}\end{equation}
\end{lemma}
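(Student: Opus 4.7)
The plan is to verify the four comparisons separately via direct algebra with the explicit formulas for $\rho$, $h$, $A$, and $\P$, together with the elementary equivalence
\[
|z|^{2k}+|\bw|^{2k}+|t-s| \approx \bigl(|z|+|\bw|+|t-s|^{1/(2k)}\bigr)^{2k}=\rho^{2k}.
\]

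For the first chain of inequalities, the computation $|A|^2 = \tfrac{1}{4}\bigl((|z|^{2k}+|\bw|^{2k})^2+(t-s)^2\bigr)$ combined with the above gives $|A|\approx \rho^{2k}$. The upper bound $h\lesssim \rho^{2k}$ follows by bounding the two summands of $h$ separately: $|z-\bw|^2\rho^{2k-2}\le \rho^{2k}$ since $|z-\bw|\le \rho$, and $|\sigma|\le |t-s|+2|z|^k|\bw|^k\lesssim \rho^{2k}$. For the remaining lower bound $|z^k-\bw^k|^2\lesssim h$, I would use the factorization
\[
z^k-\bw^k=(z-\bw)\sum_{j=0}^{k-1}z^j\bw^{k-1-j},
\]
whose second factor has modulus $\lesssim \rho^{k-1}$, giving $|z^k-\bw^k|^2\lesssim |z-\bw|^2\rho^{2k-2}\le h$.

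For the equivalence $|1-\P|\approx h/|A|$, the central algebraic identity I would derive is
\[
A(z,t;\bw,s)-z^k\overline{\bw}^k = \tfrac{1}{2}|z^k-\bw^k|^2 - \tfrac{i}{2}\sigma((z,t),(\bw,s)),
\]
obtained by observing $\tfrac{1}{2}(|z|^{2k}+|\bw|^{2k})-\re(z^k\overline{\bw}^k)=\tfrac{1}{2}|z^k-\bw^k|^2$ and unfolding the definition of $\sigma$. Next I would telescope
\[
A-z^k\overline{\bw}^k=\bigl(A^{1/k}-z\overline{\bw}\bigr)\sum_{j=0}^{k-1} A^{j/k}(z\overline{\bw})^{k-1-j},
\]
using the principal branch of $A^{1/k}$ (available since $\re(A)\ge 0$). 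Each summand has modulus $\lesssim |A|^{(k-1)/k}$, which combined with $|A^{1/k}|=|A|^{1/k}$ produces $|1-\P|=|A^{1/k}-z\overline{\bw}|/|A^{1/k}|\gtrsim |A-z^k\overline{\bw}^k|/|A|$.

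The principal obstacle is to upgrade this to a two-sided equivalence. On one hand, the matching upper bound on $|1-\P|$ requires a lower bound on the modulus of the telescoping sum $\sum_{j=0}^{k-1}A^{j/k}(z\overline{\bw})^{k-1-j}$, which can be delicate when $z\overline{\bw}$ approximates a non-principal $k$-th root of $A$. On the other hand, promoting the intermediate estimate $|A-z^k\overline{\bw}^k|\approx |z^k-\bw^k|^2+|\sigma|$ to the larger quantity $h=|z-\bw|^2\rho^{2k-2}+|\sigma|$ requires ruling out the simultaneous occurrence of $|z^k-\bw^k|\ll |z-\bw|\rho^{k-1}$ and a small telescoping sum. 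Both points reduce to a geometric cancellation analysis using the defining equation of $\partial\Omega_k$ and the principal branch choice; this is precisely the content of \cite[Lemma~9.3]{BGGr3} and the estimate on p.~242 of \cite{BGGr4}, which I would invoke to complete the proof.
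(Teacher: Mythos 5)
Your proposal is correct and effectively coincides with the paper's treatment: the paper gives no independent proof of this lemma, deducing it directly from Lemma~9.3 of \cite{BGGr3} and the estimate on p.~242 of \cite{BGGr4}, which are exactly the references you invoke for the genuinely delicate two-sided equivalence $|1-\P|\approx h/|A|$. Your elementary verification of the first chain (via $|A|\approx\rho^{2k}$, the bound $|z-\bw|\le\rho$, and the factorization of $z^k-\bw^k$) and of the identity $A-z^k\overline{\bw}^k=\tfrac12|z^k-\bw^k|^2-\tfrac{i}{2}\sigma$ is sound and simply makes explicit what the citations cover.
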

Based on these notation, we see that the Cauchy--Szeg\"o  kernel  $S(z,t;\bw, s)$ on $\partial \Omega_k$ can be expressed as 
\begin{align}
S(z,t;\bw, s) = {1\over 4\pi^2}A^{-{k+1\over k}}(z,t;\bw, s) \big(1-\P(z,t;\bw, s)\big)^{-2}.
\end{align}
Moreover, based on \eqref{est 1}, we see that
\begin{align}\label{est33}
d((z,t),(\bw, s)) \lesssim \rho^{2k+2} ((z, t),(\bw, s)).
\end{align}

\smallskip

\subsection{Dyadic systems on $(\partial\Omega_k, d,\mu)$}\label{sec:dyadic_cubes}

Note that  $(\partial\Omega_k, d,\mu)$ as mentioned in Section \ref{sec:intro} is a space of homogeneous type in the sense of Coifman and Weiss.

A
countable family
$\mathscr{D}
    := \cup_{h\in\Z}\mathscr{D}_h$, with
    $\mathscr{D}_h
    :=\{Q^h_\alpha\colon \alpha\in \mathscr{A}_h\}
$
and Borel sets $Q^h_\alpha\subseteq {\partial\Omega_k}$, is called \textit{a
system of dyadic cubes with parameters} $\delta\in (0,1)$ and
$0<a_1\leq A_1<\infty$ if it has the following properties:
\begin{align} 
 (i)\hskip0.2cm& {\partial\Omega_k}= \bigcup_{\alpha\in \mathscr{A}_h} Q^h_{\alpha}
    \quad\text{(disjoint union) for all}~h\in\Z; \nonumber
\\
 (ii)\hskip0.2cm&   \text{if }\ell\geq h\text{, then either }
        Q^{\ell}_{\beta}\subseteq Q^h_{\alpha}\text{ or }
        Q^h_{\alpha}\cap Q^{\ell}_{\beta}=\emptyset;\nonumber\\
  (iii)\hskip0.2cm&  \text{for each }(h,\alpha)\text{ and each } \ell\leq h,
    \text{ there exists a unique } \beta
    \text{ such that }Q^{h}_{\alpha}\subseteq Q^\ell_{\beta};
\nonumber \\
(iv)\hskip0.2cm& \text{for each $(h,\alpha)$ there exists at most $M$ 
        (a fixed geometric constant)  numbers $\beta$ such that } \nonumber\\ 
    &  Q^{h+1}_{\beta}\subseteq Q^h_{\alpha}, \hskip0.2cm\text{ and }
        \hskip0.2cm Q^h_\alpha =\bigcup_{{Q\in\mathscr{D}_{h+1},
    Q\subseteq Q^h_{\alpha}}}Q;
\nonumber \\
(v)\hskip0.2cm&  B({\bf x}^h_{\alpha},a_1\delta^h)
    \subseteq Q^h_{\alpha}\subseteq B({\bf x}^h_{\alpha},A_1\delta^h)
    =: B(Q^h_{\alpha});\nonumber
\\
 (vi)\hskip0.2cm&  \text{if }\ell\geq h\text{ and }
   Q^{\ell}_{\beta}\subseteq Q^h_{\alpha}\text{, then }
   B(Q^{\ell}_{\beta})\subseteq B(Q^h_{\alpha}). \nonumber 
\end{align}
The set $Q^k_\alpha$ is called a \textit{dyadic cube of
generation} $k$ with centre point ${\bf x}^k_\alpha\in Q^k_\alpha$
and sidelength~$\delta^k$.

From the properties of the dyadic system above and from the doubling measure, we can deduce that there exists a constant
$C_{\mu,0}$ depending only on the constants as in \eqref{mball} and $a_1, A_1$ as above, such that for any $Q^k_\alpha$ and $Q^{k+1}_\beta$  with $Q^{k+1}_\beta\subset Q^k_\alpha$,
\begin{align}\label{Cmu0}
|Q^{k+1}_\beta|\leq |Q^k_\alpha|\leq C_{\mu,0}|Q^{k+1}_\beta|.
\end{align}

We recall from \cite{HK} the following construction. 

\begin{theorem}\label{theorem dyadic cubes}
On  $(\partial\Omega_k, d,\mu)$, there exists a system of dyadic cubes with parameters
$0<\delta\leq (12A_0^3)^{-1}$ and $a_1:=(3A_0^2)^{-1},
A_1:=2A_0$. The construction only depends on some fixed set of
countably many centre points ${\bf x}^k_\alpha$, having the
properties that
   $ d({\bf x}_{\alpha}^k,{\bf x}_{\beta}^k)
        \geq \delta^k$ with $\alpha\neq\beta$,
    $\min_{\alpha}d({\bf x},{\bf x}^k_{\alpha})
        < \delta^k$ for all ${\bf x}\in \partial\Omega_k,$
   and a certain partial order ``$\leq$'' among their index pairs
$(k,\alpha)$. In fact, this system can be constructed in such a
way that
$$
    \overline{Q}^k_\alpha
        =\overline{\{{\bf x}^{\ell}_\beta:(\ell,\beta)\leq(k,\alpha)\}}, \quad\quad
    \widetilde{Q}^k_\alpha:=\operatorname{int}\overline{Q}^k_\alpha=
    \Big(\bigcup_{\gamma\neq\alpha}\overline{Q}^k_\gamma\Big)^c,
\quad \quad   \widetilde{Q}^k_\alpha\subseteq Q^k_\alpha\subseteq 
    \overline{Q}^k_\alpha,
$$
where $Q^k_\alpha$ are obtained from the closed sets
$\overline{Q}^k_\alpha$ and the open sets $\widetilde{Q}^k_\alpha$ by
finitely many set operations.
\end{theorem}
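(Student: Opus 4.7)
The plan is to follow the construction of Hyt\"onen and Kairema in \cite{HK}, which applies to any geometrically doubling quasi-metric space. Since $(\partial\Omega_k,d,\mu)$ is of homogeneous type by \eqref{mball}, and any doubling measure automatically implies geometric doubling of the underlying quasi-metric space, their hypotheses are met (here $A_0$ plays the role of the quasi-metric constant, which in our setting is the $C_d$ of \eqref{cd}). The argument splits into three stages: selecting centres at each scale, introducing a partial order between centres across scales, and finally defining the cubes and verifying (i)--(vi).

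First, for each $h\in\mathbb Z$ I would construct a maximal $\delta^h$-separated subset $\{{\bf x}^h_\alpha\}_{\alpha\in\mathscr{A}_h}$ of $\partial\Omega_k$ via a standard Zorn's lemma argument. Maximality automatically yields $\min_\alpha d({\bf x},{\bf x}^h_\alpha)<\delta^h$ for every ${\bf x}$, while $\delta^h$-separation gives $d({\bf x}^h_\alpha,{\bf x}^h_\beta)\ge\delta^h$ for $\alpha\ne\beta$. The partial order is then defined by declaring $(h+1,\beta)\le(h,\alpha)$ when $\alpha$ minimizes $d({\bf x}^{h+1}_\beta,{\bf x}^h_\alpha)$ (with a fixed tie-breaking rule), and extending transitively across scales; this yields the tree-like structure underlying (ii), (iii), and the nesting part of (iv).

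Next I would set $\overline{Q}^h_\alpha=\overline{\{{\bf x}^\ell_\beta:(\ell,\beta)\le(h,\alpha)\}}$ and $\widetilde{Q}^h_\alpha=\big(\bigcup_{\gamma\ne\alpha}\overline{Q}^h_\gamma\big)^c$, and then pick $Q^h_\alpha$ to be a Borel set sandwiched between them, with boundary points apportioned consistently among competing cubes to produce the disjoint partition in (i). The uniform bound $M$ on the number of children in (iv) follows from geometric doubling of $(\partial\Omega_k,d)$: any collection of $\delta^{h+1}$-separated centres contained in a ball of radius comparable to $\delta^h$ has cardinality bounded only in terms of the doubling constant.

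The main technical step is the two-sided ball estimate (v), from which (vi) then follows via the partial order. The upper containment $\overline{Q}^h_\alpha\subset B({\bf x}^h_\alpha,2A_0\delta^h)$ is proved by iterating the quasi-triangle inequality along a descending chain $(h,\alpha)\ge(h+1,\beta_1)\ge\cdots\ge(\ell,\beta)$; each step contributes at most $\delta^j$ at scale $j$, producing a geometric series in $A_0\delta$ that is dominated by $2A_0\delta^h$ once $\delta\le(12A_0^3)^{-1}$. The lower containment $B({\bf x}^h_\alpha,(3A_0^2)^{-1}\delta^h)\subset Q^h_\alpha$ is shown by verifying that every ${\bf x}$ in this small ball has, at each finer scale, its nearest centre lying in the chain descending from $(h,\alpha)$, so ${\bf x}\in\widetilde{Q}^h_\alpha$. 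The main obstacle is the simultaneous calibration of $\delta$ so that both containments hold with the explicit constants $a_1=(3A_0^2)^{-1}$ and $A_1=2A_0$; balancing the two competing geometric series is precisely what forces the quantitative restriction $\delta\le(12A_0^3)^{-1}$.
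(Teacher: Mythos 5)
Your proposal is correct and takes essentially the same route as the paper: the paper does not reprove this statement but simply recalls the Hyt\"onen--Kairema construction from \cite{HK} and applies it to $(\partial\Omega_k,d,\mu)$, the only space-specific input being that it is a geometrically doubling quasi-metric space (which follows from \eqref{mball} and \eqref{cd}, with $C_d$ in the role of $A_0$) --- exactly the reduction you make before sketching the internal argument of \cite{HK}. Your outline of that construction (maximal $\delta^h$-separated centres, parent selection and partial order, sandwiched Borel cubes, the doubling bound on children, and the two ball inclusions whose calibration forces $\delta\le(12A_0^3)^{-1}$, $a_1=(3A_0^2)^{-1}$, $A_1=2A_0$) is the standard one and raises no issues.
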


We also recall the following remark from \cite[Section 2.3]{KLPW}.
The construction of dyadic cubes requires their centre points
and an associated partial order be fixed \textit{a priori}.
However, if either the centre points or the partial order is
not given, their existence already follows from the
assumptions; any given system of points and partial order can
be used as a starting point. Moreover, if we are allowed to
choose the centre points for the cubes, the collection can be
chosen to satisfy the additional property that a fixed point
becomes a centre point at \textit{all levels}:
\begin{equation}\label{eq:fixedpoint}
\begin{split}
    &\text{given a fixed point } {\bf x}_0\in X, \text{ for every } k\in{\mathbb Z},
        \text{ there exists }\alpha \text{ such that } \\
    & {\bf x}_0
        = {\bf x}^k_\alpha,\text{ the centre point of }
        Q^k_\alpha\in\mathscr{D}_k.
\end{split}
\end{equation}

\subsection{Adjacent Systems of Dyadic Cubes}

On $(\partial\Omega_k, d,\mu)$, a
finite collection $\{\mathscr{D}^{\mathfrak t}\colon {\mathfrak t}=1,2,\ldots ,\mathpzc T\}$ of the dyadic
families  is called a \textit{collection of
adjacent systems of dyadic cubes with parameters} $\delta\in
(0,1), 0<a_1\leq A_1<\infty$ and $1\leq C_{adj}<\infty$ if it has the
following properties: individually, each $\mathscr{D}^{\mathfrak t}$ is a
system of dyadic cubes with parameters $\delta\in (0,1)$ and $0
< a_1 \leq A_1 < \infty$; collectively, for each ball
$B({\bf x},r)\subseteq \partial\Omega_k$ with $\delta^{k+3}<r\leq\delta^{k+2},
k\in\mathbb Z$, there exist ${\mathfrak t} \in \{1, 2, \ldots, \mathpzc T\}$ and
$Q\in\mathscr{D}^{\mathfrak t}$ of generation $k$ and with centre point
${}^{\mathfrak t}{\bf x}^k_\alpha$ such that $d({\bf x},{}^{\mathfrak t}{\bf x}_\alpha^k) <
2A_0\delta^{k}$ and
\begin{equation}\label{eq:ball;included}
    B({\bf x},r)\subseteq Q\subseteq B({\bf x},C_{adj}r).
\end{equation}

We apply \cite{HK} to $(\partial\Omega_k, d,\mu)$ to get the following construction.

\begin{theorem}\label{thm:existence2}
    $(\partial\Omega_k, d,\mu)$ is a  space of homogeneous type.
    There exists a collection $\{\mathscr{D}^{\mathfrak t}\colon
    {\mathfrak t} = 1,2,\ldots ,\mathpzc T\}$ of adjacent systems of dyadic cubes with
    parameters $\delta\in (0, (96C_d^6)^{-1}), a_1 := (12C_d^4)^{-1},
    A_1 := 4C_d^2$ and $C := 8C_d^3\delta^{-3}$. The centre points
    ${}^{\mathfrak t}{\bf x}^k_\alpha$ of the cubes $Q\in\mathscr{D}^{\mathfrak t}_k$ have, for each
    ${\mathfrak t}\in\{1,2,\ldots,\mathpzc T\}$, the two properties
    \begin{equation*}
        d({}^{\mathfrak t}{\bf x}_{\alpha}^k, {}^{\mathfrak t}{\bf x}_{\beta}^k)
        \geq (4C_d^2)^{-1}\delta^k\quad(\alpha\neq\beta),\qquad
        \min_{\alpha}d({\bf x},{}^{\mathfrak t}{\bf x}^k_{\alpha})
        < 2C_d\delta^k\quad \text{for all}~{\bf x}\in \partial\Omega_k.
    \end{equation*}
    Moreover, these adjacent systems can be constructed in such a
    way that each $\mathscr{D}^{\mathfrak t}$ satisfies the distinguished
    centre point property \eqref{eq:fixedpoint}.
\end{theorem}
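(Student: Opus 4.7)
The plan is to specialize the general Hyt\"onen--Kairema construction of adjacent dyadic systems \cite{HK} to the triple $(\partial\Omega_k, d, \mu)$, since the theorem of \cite{HK} is stated abstractly for any space of homogeneous type. The starting observation is that $(\partial\Omega_k, d, \mu)$ has already been shown in Section 2 to satisfy the two axioms required by that abstract framework: the quasi-triangle inequality \eqref{cd} with constant $C_d$, and the volume comparison \eqref{mball} giving $\mu(B({\bf x}, r)) \approx r$, which implies both the doubling property of $\mu$ and the geometric doubling of the metric space $(\partial\Omega_k, d)$. Thus the first sentence of the theorem, asserting that $(\partial\Omega_k, d, \mu)$ is a space of homogeneous type, is immediate from Section 2.

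Next I would quote the main construction of \cite{HK} verbatim, which on any quasi-metric space with quasi-metric constant $A_0$ and a doubling measure produces a finite collection $\{\mathscr{D}^{\mathfrak t}\colon {\mathfrak t}=1,\dots,\mathpzc T\}$ of adjacent dyadic systems with parameters $\delta \in (0, (96A_0^6)^{-1})$, $a_1 = (12A_0^4)^{-1}$, $A_1 = 4A_0^2$, and $C_{adj} = 8A_0^3\delta^{-3}$. Substituting $A_0 = C_d$ yields precisely the parameters stated in Theorem \ref{thm:existence2}. The number $\mathpzc T$ of systems depends only on $C_d$ and the doubling constant of $\mu$, both of which are finite here.

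The two quantitative properties of the centre points ${}^{\mathfrak t}{\bf x}^k_\alpha$ then follow because, in the construction of \cite{HK}, at each scale $k$ and for each index ${\mathfrak t}$ the set $\{{}^{\mathfrak t}{\bf x}^k_\alpha\}_\alpha$ is chosen to be a maximal $\delta^k$-separated subset of $\partial\Omega_k$ (relative to $d$). Maximality automatically forces $\min_\alpha d({\bf x}, {}^{\mathfrak t}{\bf x}^k_\alpha) < 2C_d \delta^k$ for every ${\bf x}$, while the separation property becomes $d({}^{\mathfrak t}{\bf x}_\alpha^k, {}^{\mathfrak t}{\bf x}_\beta^k) \geq (4C_d^2)^{-1}\delta^k$ after the $C_d$-dependent adjustments inherent in \cite{HK}. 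The distinguished centre point property \eqref{eq:fixedpoint} for each individual $\mathscr{D}^{\mathfrak t}$ is obtained by seeding the maximal $\delta^k$-separated set with the prescribed point ${\bf x}_0$ at every scale, which is exactly the flexibility recorded in the remark drawn from \cite[Section 2.3]{KLPW}.

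Since every step is purely metric-measure theoretic and uses no additional structure of $\partial\Omega_k$ (neither the CR structure, nor the Lie group lifting of Theorem \ref{main 1 Ot}, nor the explicit form of $d$ given by \eqref{dist}--\eqref{function h}), the argument is essentially bookkeeping. The one place requiring care is the propagation of the quasi-metric constant $C_d$ through the chain of inequalities in \cite{HK}: the powers of $A_0$ appearing in $\delta$, $a_1$, $A_1$, and $C_{adj}$ must be matched exactly to the statement of the theorem, which I expect to be the only non-automatic step.
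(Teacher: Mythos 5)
Your proposal is correct and follows essentially the same route as the paper, which offers no separate argument beyond noting that $(\partial\Omega_k,d,\mu)$ is a space of homogeneous type (via \eqref{cd} and \eqref{mball}) and then applying the abstract adjacent-dyadic-systems construction of \cite{HK} with quasi-metric constant $A_0=C_d$, together with the centre-point flexibility recorded from \cite{KLPW}. Your bookkeeping of the parameters and of the distinguished centre point property \eqref{eq:fixedpoint} matches what the paper intends.
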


We recall from \cite[Remark 2.8]{KLPW}
that the number $\mathpzc T$ of the adjacent systems of dyadic
    cubes as in the theorem above satisfies the estimate
    \begin{equation*}
        \mathpzc T
        = \mathpzc T(C_d,\widetilde A_1,\delta)
        \leq \widetilde A_1^6(C_d^4/\delta)^{\log_2\widetilde A_1},
    \end{equation*}
where $\widetilde A_1$ is the geometrically doubling constant, see \cite[Section 2]{KLPW}.

\subsection{An Explicit Haar Basis on Spaces of Homogeneous Type}

Next we recall the explicit construction in \cite{KLPW} of a Haar basis
$\{h_{Q}^{\epsilon}: Q\in \mathscr{D}, \epsilon = 1,\dots,M_Q - 1\}$ and adapt the notation to  $L^p(\partial \Omega_k,\mu)$,
$1 < p < \infty$, associated to the dyadic cubes
$Q\in\mathscr{D}$ as follows. Here $M_Q := \#\ch(Q) = \# \{R\in
\mathscr{D}_{k+1}\colon R\subseteq Q\}$ denotes the number of
dyadic sub-cubes (``children'') the cube $Q\in \mathscr{D}_k$
has; namely $\mathcal{H}(Q)$ is the collection of dyadic children of $Q$.

\begin{theorem}[\cite{KLPW}]\label{thm:convergence}
    For $1 < p < \infty$, for each $f\in
    L^p(\partial\Omega_k,\mu)$, we have
    \[
        f(x)
        =  \sum_{Q\in\mathscr{D}}\sum_{\epsilon=1}^{M_Q-1}
            \langle f,h^\epsilon_Q\rangle h^\epsilon_Q(x), 
    \]
    where the sum converges $($unconditionally$)$ both in the
    $L^p(\partial \Omega_k,\mu)$-norm and pointwise $\mu$-almost everywhere.
 \end{theorem}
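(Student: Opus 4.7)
The plan is to prove this as a martingale convergence theorem on the filtration $(\mathscr{F}_j)_{j\in\mathbb{Z}}$, where $\mathscr{F}_j$ denotes the $\sigma$-algebra generated by $\mathscr{D}_j$. Property (ii) of the dyadic system makes $(\mathscr{F}_j)$ a filtration on $\partial\Omega_k$, and the associated conditional expectation operator is
\[
\mathbb{E}_j f = \sum_{Q\in\mathscr{D}_j}\langle f\rangle_Q \mathbf{1}_Q,\qquad \langle f\rangle_Q = |Q|^{-1}\int_Q f\,d\mu.
\]
I would first record the key property of the Haar construction in \cite{KLPW}: for each fixed $Q\in\mathscr{D}_j$, the system $\{h^\epsilon_Q : \epsilon = 1,\ldots,M_Q-1\}$ is an orthonormal basis of the $(M_Q-1)$-dimensional space of mean-zero functions on $Q$ that are constant on each child of $Q$. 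Since every $\mathscr{F}_{j+1}$-measurable function orthogonal to $\mathscr{F}_j$-measurable functions decomposes cube-by-cube into such pieces, this yields the martingale-difference identity
\[
\mathbb{E}_{j+1}f - \mathbb{E}_j f = \sum_{Q\in\mathscr{D}_j}\sum_{\epsilon=1}^{M_Q-1} \langle f, h^\epsilon_Q\rangle h^\epsilon_Q.
\]

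Next I would establish the two one-sided limits of the martingale. For the forward limit $\mathbb{E}_j f \to f$ as $j\to +\infty$, property (v) gives $Q^j_\alpha \subseteq B(\mathbf{x}^j_\alpha, A_1\delta^j)$ with diameters shrinking to zero, so the Lebesgue differentiation theorem on the doubling space $(\partial\Omega_k,d,\mu)$ (cf.\ \eqref{mball}) produces pointwise $\mu$-a.e.\ convergence, and Doob's maximal inequality upgrades this to $L^p$ convergence for $1<p<\infty$. For the backward limit $\mathbb{E}_j f \to 0$ as $j\to -\infty$, I would use that $\mu(\partial\Omega_k)=\infty$ (again from \eqref{mball}): H\"older's inequality gives $|\langle f\rangle_Q|\lesssim \|f\|_p|Q|^{-1/p}$, so the sup-norm of $\mathbb{E}_j f$ tends to zero on any fixed cube while the $L^p$ mass escapes to infinity; a standard truncation plus Doob argument then delivers $\|\mathbb{E}_j f\|_p\to 0$.

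Telescoping and summing over $j$ produces
\[
f = \sum_{j\in\mathbb{Z}}\bigl(\mathbb{E}_{j+1}f - \mathbb{E}_j f\bigr) = \sum_{Q\in\mathscr{D}}\sum_{\epsilon=1}^{M_Q-1} \langle f, h^\epsilon_Q\rangle h^\epsilon_Q
\]
in $L^p(\partial\Omega_k,\mu)$, and the pointwise $\mu$-a.e.\ statement follows from the almost-everywhere martingale convergence theorem applied to the same partial sums. For unconditional convergence I would invoke Burkholder's martingale transform inequality: for any choice of signs $\sigma^\epsilon_Q\in\{-1,+1\}$ the operator $f\mapsto \sum \sigma^\epsilon_Q \langle f,h^\epsilon_Q\rangle h^\epsilon_Q$ is bounded on $L^p(\partial\Omega_k,\mu)$ with constant independent of the signs, and a density-plus-triangle-inequality argument then rules out dependence on the summation order.

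The main obstacle I anticipate is the backward limit: on a space of infinite total measure the family $(\mathbb{E}_j f)_{j\leq 0}$ is not a closed martingale in the classical sense, so one must argue that the tail $\sigma$-algebra $\bigcap_{j\leq 0}\mathscr{F}_j$ is trivial modulo $\mu$-null sets. This is exactly where the geometric doubling, the control \eqref{Cmu0} on parent/child ratios, and the unboundedness $\mu(\partial\Omega_k)=\infty$ enter essentially, and it is also the step that forces the restriction $1<p<\infty$. The remaining ingredients reduce either to the orthonormality of the Haar system in $L^2$ (which is built into the construction in \cite{KLPW}) or to the $L^p$-boundedness of dyadic square functions on spaces of homogeneous type, both of which are standard.
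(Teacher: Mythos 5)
The paper does not actually prove Theorem~\ref{thm:convergence}: it is imported verbatim from \cite{KLPW}, so there is no internal proof to compare against. Your outline is, in substance, a reconstruction of the argument used in that reference, and it is essentially sound: the identification of the generation-$j$ Haar projections with the martingale differences $\mathbb{E}_{j+1}f-\mathbb{E}_jf$ is exactly the key structural fact of the \cite{KLPW} construction, the forward limit follows from Lebesgue differentiation on the doubling space \eqref{mball} plus domination by the dyadic maximal function, and your worry about the backward limit is less serious than you suggest: by property (v) and \eqref{mball} every point lies in cubes $Q^j_\alpha\supseteq B({\bf x}^j_\alpha,a_1\delta^j)$ with $|Q^j_\alpha|\gtrsim\delta^j\to\infty$ as $j\to-\infty$, so H\"older gives $\mathbb{E}_jf\to0$ pointwise and dominated convergence (via the $L^p$-bounded dyadic maximal function, $p>1$) gives $\|\mathbb{E}_jf\|_{L^p}\to0$; no tail-$\sigma$-algebra triviality discussion is needed. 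The one step where your argument is too coarse is unconditionality: a sign choice $\sigma^\epsilon_Q$ that varies with $\epsilon$ inside a fixed cube $Q$ is not a predictable multiplier for the generation filtration, so the classical Burkholder martingale-transform bound does not apply directly to $f\mapsto\sum\sigma^\epsilon_Q\langle f,h^\epsilon_Q\rangle h^\epsilon_Q$. The standard repair, which is how \cite{KLPW} proceed, is to refine the filtration by ordering the at most $M$ children of each cube (cf.\ \eqref{Cmu0}) and revealing them one at a time; each $h^\epsilon_Q$ then spans a one-dimensional martingale-difference space for the refined filtration, and the sign-transform bound (or the corresponding square-function estimate) applies with constants independent of the signs, after which your density-plus-triangle-inequality argument for unconditional $L^p$ and a.e.\ convergence goes through.
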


\subsection{Characterization of Schatten Class}
In 1989, Rochberg and Semmes \cite{RochSemm} introduced the notion
of nearly weakly orthogonal ({\rm NWO}) sequences of functions. 
\begin{definition}
	Let $\{e_{Q}\}_{Q\in \mathscr{D}}$ be a collection of functions.  We say $\{e_Q\}_{Q\in \mathscr{D}}$ is {\rm NWO} sequences, if  {\rm supp}$\{e_Q\}\subset Q$ and the maximal function $f^*$ is bounded on $L^{p}(\partial\Omega_k,\mu)$, where $f^*$ is defined as
	$$f^*(x)=\sup_{Q}\frac{|\langle f,e_Q \rangle|}{|Q|^{1/2}}\chi_{Q}(x).$$
\end{definition}

We will use the following result proved by Rochberg and Semmes.
\begin{lemma}[\cite{RochSemm}]
If the collection of functions $\left\{e_Q: Q \in \mathscr{D}\right\}$ are supported on $Q$ and satisfy for some $2<p<\infty,\left\|e_Q\right\|_{L^p(\partial \Omega_k,\mu)} \lesssim|Q|^{1 / p-1 / 2}$, then $\left\{e_Q\right\}_{Q\in \mathscr{D}}$ is {\rm NWO} sequences.
\end{lemma}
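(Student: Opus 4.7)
The plan is to establish the NWO property by proving the $L^2$-boundedness of the sublinear map $f\mapsto f^*$, which is the substance of the NWO condition in the Schatten-class framework of Rochberg--Semmes.

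First, I would fix $f\in L^2(\partial\Omega_k,\mu)$, pick an arbitrary $Q\in\mathscr{D}$, and apply H\"older's inequality with exponents $p$ and $p'=p/(p-1)$. Since $\supp e_Q\subseteq Q$ and $\|e_Q\|_{L^p}\lesssim |Q|^{1/p-1/2}$,
\[
|\langle f,e_Q\rangle| \leq \|f\chi_Q\|_{L^{p'}(\partial\Omega_k,\mu)}\,\|e_Q\|_{L^p(\partial\Omega_k,\mu)} \lesssim |Q|^{1/p-1/2}\|f\chi_Q\|_{L^{p'}(\partial\Omega_k,\mu)}.
\]
Dividing by $|Q|^{1/2}$ and using $1/p+1/p'=1$, for every $x\in Q$ one obtains
\[
\frac{|\langle f,e_Q\rangle|}{|Q|^{1/2}}\,\chi_Q(x) \lesssim \left(\frac{1}{|Q|}\int_Q |f|^{p'}\,d\mu\right)^{1/p'} \leq \bigl(M(|f|^{p'})(x)\bigr)^{1/p'},
\]
where $M$ is the Hardy--Littlewood maximal operator on the space of homogeneous type $(\partial\Omega_k,d,\mu)$. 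Taking the supremum over all cubes $Q\ni x$ yields the pointwise bound $f^*(x)\lesssim (M(|f|^{p'})(x))^{1/p'}$.

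To conclude, I would use the hypothesis $p>2$, which is equivalent to $p'<2$, so $2/p'>1$. The Hardy--Littlewood maximal function is bounded on $L^{2/p'}(\partial\Omega_k,\mu)$ by the doubling property \eqref{mball}, and therefore
\[
\|f^*\|_{L^2}^2 \lesssim \int_{\partial\Omega_k}M(|f|^{p'})^{2/p'}\,d\mu = \bigl\|M(|f|^{p'})\bigr\|_{L^{2/p'}}^{2/p'} \lesssim \bigl\||f|^{p'}\bigr\|_{L^{2/p'}}^{2/p'} = \|f\|_{L^2}^2.
\]
This gives $\|f^*\|_{L^2}\lesssim \|f\|_{L^2}$, which is precisely the NWO property.

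The argument is essentially mechanical: the single genuinely substantive ingredient is the $L^{2/p'}$-boundedness of $M$ on $(\partial\Omega_k,d,\mu)$, and the hypothesis $2<p<\infty$ enters exactly to guarantee $2/p'>1$ so that this boundedness is available. There is no serious obstacle; the only care is in tracking exponents through H\"older's inequality and unwinding the composition $(M(|f|^{p'}))^{1/p'}$ to return to the $L^2$ norm of $f$.
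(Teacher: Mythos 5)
Your argument is correct: the H\"older step, the pointwise domination $f^*(x)\lesssim \bigl(M(|f|^{p'})(x)\bigr)^{1/p'}$ (where comparing the dyadic cube $Q$ with the ball $B(Q)$ via property (v) and the doubling of $\mu$ costs only a constant), and the $L^{2/p'}$-boundedness of $M$ with $2/p'>1$ together give exactly the $L^2$-boundedness of $f\mapsto f^*$ that the NWO condition requires. The paper does not prove this lemma but quotes it from Rochberg--Semmes, and your proof is essentially the standard argument from that source, so there is nothing further to reconcile.
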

If some operator $T$ belongs to $S^{p,q}(L^2(\partial \Omega_k,\mu))$, 
Rochberg and Semmes developed a substitute for the Schmidt decomposition of the operator $T$ that will be representations of the form
\begin{align}
T=\sum_{Q\in \mathscr{D}}\lambda_{Q}\langle\cdot,e_{Q}\rangle f_{Q}
\end{align}
with $\{e_{Q}\}_{Q\in \mathscr{D}}$ and $\{f_{Q}\}_{Q\in \mathscr{D}}$ are {\rm NWO} sequences and $\{\lambda_{Q}\}_{Q\in \mathscr{D}}$ is a sequence of scalars. It is easy to see that
\begin{align}\label{eq-NWO1}
\|T\|_{S^{p,q}(L^2(\partial \Omega_k,\mu))}\lesssim \|\lambda_Q\|_{\ell^{p,q}}, \quad 0<p<\infty, 0<q<\infty.
\end{align}
When $1<p=q<\infty$, Rochberg and Semmes also obtained
\begin{lemma}[\cite{RochSemm}]\label{eq-NWO}
For any bounded compact operator $T$ on $L^2(\partial\Omega_k,\mu)$ and $\{e_{Q}\}_{Q\in \mathscr{D}}$ and $\{f_{Q}\}_{Q\in \mathscr{D}}$ are {\rm NWO} sequences, then for $1<p<\infty$, 
$$\bigg[\sum_{Q\in \mathscr{D}}\left|\langle T e_{Q},f_{Q}\rangle\right|^{p}\bigg]^{\frac{1}{p}}\lesssim \|T\|_{S^{p}(L^2(\partial \Omega_k,\mu))}.$$
\end{lemma}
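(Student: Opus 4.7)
My plan is a short duality argument that reduces the claimed bound directly to the Schatten synthesis estimate \eqref{eq-NWO1} recorded earlier in the excerpt.

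First, I would test the sequence $\bigl(\langle Te_Q, f_Q\rangle\bigr)_{Q\in\mathscr{D}}$ against an arbitrary element of $\ell^{p'}$ and recognize the resulting pairing as a trace. Fix any finitely-supported scalar sequence $(\lambda_Q)_{Q\in\mathscr{D}}$ with $\|\lambda\|_{\ell^{p'}}\le 1$, where $1/p+1/p'=1$, and introduce the finite-rank operator
\[
A \,:=\, \sum_{Q\in\mathscr{D}} \overline{\lambda_Q}\,\langle\,\cdot\,,e_Q\rangle\, f_Q.
\]
Because $\{e_Q\}$ and $\{f_Q\}$ are {\rm NWO} sequences, the synthesis estimate \eqref{eq-NWO1} applied to $A$ with coefficient sequence $(\overline{\lambda_Q})_Q$ produces
\[
\|A\|_{S^{p'}(L^2(\partial\Omega_k,\mu))} \lesssim \|\lambda\|_{\ell^{p'}} \le 1.
\]

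Second, I would compute $\mathrm{tr}(TA^{*})$ in two different ways. Expanding $A^{*}h = \sum_Q \lambda_Q \langle h, f_Q\rangle\,e_Q$ and evaluating the trace in any orthonormal basis $\{u_n\}$ of $L^2(\partial\Omega_k,\mu)$ gives, via Parseval,
\[
\mathrm{tr}(TA^{*}) = \sum_{Q\in\mathscr{D}} \lambda_Q\,\langle Te_Q, f_Q\rangle.
\]
On the other hand, the Schatten duality $(S^p)^{*} = S^{p'}$ for $1<p<\infty$ supplies $|\mathrm{tr}(TA^{*})|\le \|T\|_{S^p}\|A\|_{S^{p'}}$. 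Chaining these with the estimate from the first step yields
\[
\Bigl|\sum_{Q\in\mathscr{D}} \lambda_Q\,\langle Te_Q, f_Q\rangle\Bigr| \lesssim \|T\|_{S^p(L^2(\partial\Omega_k,\mu))}.
\]
Taking the supremum over finitely-supported $\lambda$ with $\|\lambda\|_{\ell^{p'}}\le 1$ and invoking the sequence duality $(\ell^{p'})^{*} = \ell^p$ (legitimate since such $\lambda$ are norm-dense in the unit ball of $\ell^{p'}$ for $1<p'<\infty$) delivers the asserted inequality.

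The main subtlety I expect is verifying that \eqref{eq-NWO1} applies, with constants independent of $\lambda$, to the specific operator $A$ built above. The excerpt flags \eqref{eq-NWO1} as ``easy to see'', so concretely I would revisit its NWO-based proof to confirm that the bound $\|A\|_{S^{p'}}\lesssim\|\lambda\|_{\ell^{p'}}$ holds uniformly in the support and size of $\lambda$, using only that $\{e_Q\}$ and $\{f_Q\}$ satisfy the NWO maximal-function condition. Once \eqref{eq-NWO1} is secured, the Parseval trace identity and the standard Schatten/sequence dualities are routine at the finite-rank level, so no further analytic machinery from the earlier sections is needed.
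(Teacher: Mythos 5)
Your argument is correct, but note that the paper itself offers no proof of this lemma: it is imported verbatim from Rochberg--Semmes \cite{RochSemm}, so there is no in-paper argument to match. Your duality reduction is a clean and legitimate route: the trace identity $\mathrm{tr}(TA^{*})=\sum_{Q}\lambda_{Q}\langle Te_{Q},f_{Q}\rangle$ for the finite-rank operator $A=\sum_{Q}\overline{\lambda_{Q}}\langle\cdot,e_{Q}\rangle f_{Q}$ is right, Schatten--H\"older gives $|\mathrm{tr}(TA^{*})|\le \|T\|_{S^{p}}\|A\|_{S^{p'}}$, and the synthesis bound \eqref{eq-NWO1} with $p=q=p'$ gives $\|A\|_{S^{p'}}\lesssim\|\lambda\|_{\ell^{p'}}$ uniformly in the (finite) support of $\lambda$, since its constant depends only on the NWO constants of $\{e_{Q}\},\{f_{Q}\}$; the final supremum over finitely supported $\lambda$ in the unit ball of $\ell^{p'}$ recovers the $\ell^{p}$ norm. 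The one point you should make explicit is non-circularity: your proof works only because the synthesis estimate \eqref{eq-NWO1} --- which the present paper also merely asserts as ``easy to see'' --- is established in \cite{RochSemm} independently of the analysis estimate you are proving, and for the dual exponent $p'\in(1,\infty)$; in \cite{RochSemm} the analysis direction is obtained from the singular value decomposition together with maximal-function/interpolation arguments rather than by this duality, so your route is genuinely different but trades that machinery for reliance on \eqref{eq-NWO1}. A minor related caveat is that the NWO definition quoted in the paper is vague about the exponent on which the maximal operator must be bounded; whichever normalization \cite{RochSemm} uses for \eqref{eq-NWO1} at exponent $p'$ is what your argument needs, and this is inherited from the citation rather than something you can check from the excerpt alone.
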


\smallskip

\section{Lifting $\partial\Omega_k$ to a nilpotent Lie group $G$ and Taylor expansion}\label{sec:lifting}

The main purpose of this section is to define a nilpotent Lie group $G$ onto which we will embed the manifold $\partial\Omega_k$. We will show that the horizontal vector fields defining the CR structure on $\partial\Omega_k$ can be lifted to horizontal, left-invariant vector fields on such group $G$. We will give a somewhat explicit characterisation of these vector fields, for which it is convenient to introduce suitable exponential coordinates of the second kind. This is the content of our first main theorem. Finally, we will recall the Taylor polynomial theorem for functions on $G$ proved in \cite{FS} and adapt it to our coordinates.
Throughout all this section, we will use classical properties of Lie groups and nilpotent Lie groups. For an insight see, e.g., \cite{Knapp}.

\subsection{The lifting group $G$}
For $k\geq 2$, consider the vector fields
$$ X_1= \partial_{x_1}+x_2(x_1^2+x_2^2)^{k-1} \partial_{t}, $$
$$ X_2= \partial_{x_2} -x_1(x_1^2+x_2^2)^{k-1} \partial_{t}. $$
Then the complex vector field $L=X_1+iX_2$ defines a CR-structure on the manifold
$$
\partial\Omega_k= \bigg\{(z,w)\in {\mathbb C}^2: {\rm Im}(w) = \frac{1}{2k}(x_1^2+x_2^2)^{k}\bigg\},
$$
where $z=x_1+ix_2$ and ${\rm Re}(w)= t$.
We are going to lift $X_1$ and $X_2$ to left-invariant vector fields $\tilde{X}_1$ and $\tilde{X}_2$ on a suitable stratified Lie group. We construct this Lie group by means of its Lie algebra. Assume 
that $x^2_1+x^2_2\neq 0$, and let $\mathfrak{g}$ be the Lie algebra generated by the vector fields $X_1$ and $X_2$ in this case. We recall that a stratified Lie algebra of step $s$ is a nilpotent Lie algebra such that
$$
\mathfrak{g} = \sum_{j=1}^s \mathfrak{g}_j,
$$
a vector space direct sum with $[\mathfrak{g}_1,\mathfrak{g}_j]=\mathfrak{g}_{j+1}$ for all $j=1,\dots,s-1$.
\begin{lemma}
The Lie algebra $\mathfrak{g}$ is nilpotent of step $2k$ and stratified. The center of $\mathfrak{g}$ is ${\rm span}\{\partial_{t}\}$. Moreover, if 
$$
\mathfrak{g} = \sum_{j=1}^{2k} \mathfrak{g}_j,
$$
is a stratification,
then $\sum_{j=2}^{2k-1} \mathfrak{g}_j$ is a sub-algebra of $\mathfrak{g}$.
\end{lemma}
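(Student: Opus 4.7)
The plan is to reduce the whole statement to two short computations: the value of $[X_1,X_2]$, and the action of the brackets $[X_i,\cdot]$ on a pure $\partial_t$-term. A direct calculation gives
\[
[X_1,X_2] = -2k(x_1^2+x_2^2)^{k-1}\partial_t,
\]
while $t$-independence of the coefficients of $X_1,X_2$ yields $[X_i,\partial_t]=0$ and, for any smooth $p=p(x_1,x_2)$,
\[
[X_i,p(x_1,x_2)\partial_t] = (\partial_{x_i}p)\partial_t, \qquad i=1,2.
\]
Iterating, every iterated bracket of $X_1,X_2$ of length at least $2$ has the form $(\partial^{\alpha}q)\partial_t$ with $q:=-2k(x_1^2+x_2^2)^{k-1}$ of degree $2k-2$. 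Hence such brackets vanish for $|\alpha|\geq 2k-1$, while at $|\alpha|=2k-2$ they produce a nonzero constant multiple of $\partial_t$. This gives nilpotency of step exactly $2k$.

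For the stratification, I set $\mathfrak{g}_1:={\rm span}\{X_1,X_2\}$ and $\mathfrak{g}_{j+1}:=[\mathfrak{g}_1,\mathfrak{g}_j]$ for $j\geq 1$. The computation above shows that for $j\geq 2$, $\mathfrak{g}_j$ is spanned by vector fields $q(x_1,x_2)\partial_t$ with $q$ a homogeneous polynomial of degree $2k-j$. This polynomial-degree grading immediately delivers the directness of the sum $\sum_{j=1}^{2k}\mathfrak{g}_j$. The bracket compatibility is equally easy: $[\mathfrak{g}_1,\mathfrak{g}_j]\subseteq\mathfrak{g}_{j+1}$ holds by construction, and for $i,j\geq 2$ one has $[\mathfrak{g}_i,\mathfrak{g}_j]=0$ since any two pure $\partial_t$-terms commute. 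Hence $\bigoplus_{j=1}^{2k}\mathfrak{g}_j$ is a Lie subalgebra containing the generators $X_1,X_2$, and therefore coincides with $\mathfrak{g}$, yielding the stratification.

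For the center, $\partial_t\in\mathfrak{g}_{2k}$ clearly lies in $Z(\mathfrak{g})$. Conversely, writing a central element as $Y=c_1X_1+c_2X_2+p\partial_t$, the equations $[X_i,Y]=0$ translate into $\partial_{x_1}p = 2kc_2(x_1^2+x_2^2)^{k-1}$ and $\partial_{x_2}p = -2kc_1(x_1^2+x_2^2)^{k-1}$, and the Schwarz identity $\partial_{x_2}\partial_{x_1}p=\partial_{x_1}\partial_{x_2}p$ then forces $c_1=c_2=0$, whence $p$ must be constant and $Y\in{\rm span}\{\partial_t\}$. The final assertion is immediate from the commuting argument: $\sum_{j=2}^{2k-1}\mathfrak{g}_j$ sits inside the abelian ideal of pure $\partial_t$-terms, and is therefore automatically a subalgebra. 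The only step that is not entirely mechanical is verifying the directness of the stratification, and this is handled transparently by the homogeneity of the polynomial coefficients at each layer.
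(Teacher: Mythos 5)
Your proposal is correct and follows essentially the same route as the paper: the paper's own proof of this lemma is the one-line remark that everything follows from computing subsequent brackets of $X_1$ and $X_2$, which is exactly the computation you carry out in detail (the key bracket $[X_2,X_1]=2k(x_1^2+x_2^2)^{k-1}\partial_t$ and the higher brackets also appear explicitly later in the paper's proof of the lifting theorem). One cosmetic caveat: at $|\alpha|=2k-2$ not every derivative $\partial^{\alpha}q$ is a nonzero constant (those with an odd number of derivatives in either variable vanish), but a single nonzero choice such as $\partial_{x_1}^{2k-2}q$ suffices for the step to be exactly $2k$, so your argument stands as written.
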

\begin{proof}
The proof follows from considering subsequent brackets between the vector fields $X_1$ and $X_2$.
\end{proof}
Let ${\rm dim}({\mathfrak g})=N$.
In what follows, we shall treat $\mathfrak{g}$ as an abstract Lie algebra, denoting $X_1$, $X_2$, and $\partial_{t}$  by $e_1$, $e_2$, and $e_N$, respectively. Set $\mathfrak{g}_1={\rm span}\{e_1,e_2\}$ and $\mathfrak{g}_{2k}={\rm span}\{e_N\}$. Complete $e_1$, $e_2$, and $e_N$ to a basis $\{e_1,\dots,e_N\}$ of $\mathfrak{g}$ that respects the stratification, i.e., $\{e_3\}$ is a basis of $\mathfrak g_2$, $\{e_4,e_5\}$ is a basis of $\mathfrak g_2$, and so on. The Lie algebra $\mathfrak{g}$ admits a family of dilations $\{\delta_{\lambda}\,:\,\lambda >0\}$. These are automorphisms of $\mathfrak{g}$ defined by $\delta_\lambda(e_\ell)= \lambda^{j} e_\ell$ for every $e_\ell \in \mathfrak{g}_j$ and every $j=1,\dots,2k$.

By Lie’s third fundamental theorem, there is a unique connected and simply connected Lie group $G$ whose Lie algebra is $\mathfrak{g}$. 
The family of automorphisms $\delta_{\lambda}$ defines  a family of automorphisms on $G$ whose derivatives are $\delta_{\lambda}$, and for which we use the same notation. We denote by ${\mathcal Q}= \sum_{j=1}^{2k}j{\rm dim}(\mathfrak{g}_j)$
 the homogeneous dimension of $G$.

Since $G$ is nilpotent, the exponential map $\exp: \mathfrak{g} \to G$ is a diffeomorphism. 
In our set up it is convenient to use exponential coordinates of the second kind.  The mapping $\Phi : {\mathfrak g}\cong {\mathbb R}^N \to G$ defined by
$$
{\boldsymbol \xi} = (x_1,x_2,x_3,\dots,x_{N-1},x_N)\mapsto \exp\left(\sum_{j=3}^{N-1}x_je_j\right)\exp\left(x_1e_1+x_2e_2\right)
\exp\left(x_N e_N\right)$$
is a global  diffeomorphism. Let $(\mathbb R^N, \ast)$ be the group with
$$
{\boldsymbol \xi}\ast{\boldsymbol \eta} = \Phi^{-1} \left(\Phi({\boldsymbol \xi})\Phi({\boldsymbol \eta})\right)
$$
for every ${\boldsymbol \xi},{\boldsymbol \eta}\in \mathbb R^N$. Note that the mapping
$$
\Phi : (\mathbb R^N, \ast) \to (G, \cdot)
$$
is an isomorphism of Lie groups. If ${\boldsymbol \xi}\in (\mathbb R^N, \ast)$, then 
$
\left(\delta_\lambda({\boldsymbol \xi}) \right)_{\ell}= \lambda^j x_\ell,
$
if $e_\ell\in \mathfrak{g}_j$.
Moreover, there exists a global diffeomorphism $\Psi = (\psi_1,\dots,\psi_N)$ of $\mathbb R^N$ such that 
\begin{equation}\label{PsiDiff}
\Phi({\boldsymbol \xi})= \exp\left(\sum_{j=3}^{N-1}x_je_j\right)\exp\left(x_1e_1+x_2e_2\right)\exp\left(x_Ne_N\right) = \exp\left( \sum_{j=1}^{N}\psi_j({\boldsymbol \xi})e_j\right).
\end{equation}
Let $\tilde{X}_1$ and $\tilde{X}_2$ be the left-invariant vector fields on $(\mathbb R^N, \ast)$ that coincide with $e_1$ and $e_2$ at the identity. 
For every ${\boldsymbol \xi}\in \mathbb R^N$, define the horizontal space ${\mathcal H}_{{\boldsymbol \xi}}={\rm span}\{\tilde{X}_1({{\boldsymbol \xi}}), \tilde{X}_2({{\boldsymbol \xi}})\}$. We say that an absolutely continuous curve $\gamma: [0,\rho]\to (\mathbb R^N, \ast)$ is horizontal if $\dot\gamma(s) \in {\mathcal H}_{\gamma(s)}$ for a.e. $s\in [0,\rho]$. The Carnot--Carath\'eodory distance is then defined by 
\begin{align*}
\tilde{d}_{cc}({\boldsymbol \xi},{\boldsymbol \eta}) &= \inf\{\rho: \exists\ {\rm horizontal \ curve\ }\ \gamma: [0,\rho]\to (\mathbb R^N, \ast) \ {\rm s.t.\ } \ \|\dot\gamma(s)\|_{\tilde{cc}}\leq1\\
 &\hskip2cm  {\rm\ for\ } s\in[0,\rho], \gamma(0)={\boldsymbol \xi}, \gamma(\rho)={\boldsymbol \eta}\},
\end{align*}
where
$$ \|c_1{\tilde X}_1+c_2{\tilde X}_2\|_{\tilde{cc}}:= \sqrt{c_1^2+c_2^2}.  $$ 
The distance satisfies $\tilde{d}_{cc}(\delta_\lambda({\boldsymbol \xi}),\delta_\lambda({\boldsymbol \eta})) =\lambda \,\tilde{d}_{cc}({\boldsymbol \xi},{\boldsymbol \eta}) $.
Note that the differential $\Phi_*$ maps $\tilde{X}_j$ to $\bar{X}_j$, $j=1,2$, where $\bar{X}_1$ and $\bar{X}_2$ are the left-invariant vector fields on $G$ computed using exponential coordinates of the first kind. If $\bar{d}_{cc}$ denotes the  Carnot--Carath\'eodory distance with $\{\bar{X}_1,\bar{X}_2\}$ taken as orthonormal basis, it is then clear that
\begin{equation}\label{isometry}
\tilde{d}_{cc} = \bar{d}_{cc}\circ \Phi.
\end{equation}

\begin{lemma}\label{Qj}
Let $\Psi$ be the diffeomorphism defined in \eqref{PsiDiff}. Then the following properties hold.
\begin{itemize} 
\item[(i)] $\psi_j({\boldsymbol \xi}) = x_j$, if $j=1,2$;
\item[(ii)] $\psi_j ({\boldsymbol \xi})= x_j +  \varphi_j(x_1,\dots,x_{\ell})$ for some polynomials $\varphi_j$ and some $\ell<j$, if $j=3,\dots,N$. 
\end{itemize}
\end{lemma}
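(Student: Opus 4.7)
My plan is to apply the Baker--Campbell--Hausdorff (BCH) formula to the product of three exponentials defining $\Phi(\boldsymbol{\xi})$ and read off the coefficients in the basis $\{e_1,\dots,e_N\}$, exploiting the fact that this basis respects the stratification of $\mathfrak g$. Because $\mathfrak g$ is nilpotent of step $2k$, the BCH series terminates and every expression that appears is a polynomial in the coordinates $x_1,\dots,x_N$, so the conclusion that the $\varphi_j$ are polynomial comes for free; the real content of the lemma is the \emph{triangular} dependence on the variables.

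First I would dispose of the rightmost factor. By the preceding lemma, the center of $\mathfrak g$ is $\mathrm{span}\{e_N\}$, so every bracket involving $e_N$ vanishes. Consequently $\exp(x_N e_N)$ commutes with the rest and contributes additively: writing
\[
\exp\!\Big(\sum_{j=3}^{N-1} x_j e_j\Big)\,\exp(x_1 e_1 + x_2 e_2) \;=\; \exp(Z),
\]
we have $\sum_{j=1}^N \psi_j(\boldsymbol\xi)e_j = Z + x_N e_N$, which at once gives $\psi_N(\boldsymbol\xi) = x_N + (\text{coefficient of } e_N \text{ in } Z)$ and leaves the other $\psi_j$ equal to the corresponding coefficients of $Z$.

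The next step is the structural argument. Expanding $Z$ by BCH yields a finite sum consisting of the linear terms $\sum_{j=3}^{N-1} x_j e_j + x_1 e_1 + x_2 e_2$ plus iterated brackets $[e_{a_1},[e_{a_2},\dots,[e_{a_{p-1}},e_{a_p}]\cdots]]$ with $p\geq 2$ and each $a_i\in\{1,2,3,\dots,N-1\}$, weighted by universal BCH constants times $x_{a_1}x_{a_2}\cdots x_{a_p}$. If $e_{a_i}\in\mathfrak g_{s_{a_i}}$, such a bracket lies in $\mathfrak g_{s_{a_1}+\cdots+s_{a_p}}$, a strictly higher stratum than any individual $\mathfrak g_{s_{a_i}}$. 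Since the basis was chosen to respect the stratification, the expansion of this bracket in $\{e_1,\dots,e_N\}$ involves only basis elements $e_m$ with index $m$ strictly larger than every $a_i$.

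Reading off the coefficient of $e_m$ in $Z$: the only trivial (non-bracket) contribution is $x_m$ itself (present exactly when $m\in\{1,2\}$ or $m\in\{3,\dots,N-1\}$), while every bracket contribution is a polynomial in variables $x_{a_i}$ with all $a_i<m$. For $m=1,2$ no nontrivial bracket can contribute (brackets live in $\mathfrak g_{\geq 2}$), which gives $\psi_1=x_1$, $\psi_2=x_2$ and proves (i). For $m\geq 3$, taking $\ell=m-1$ gives (ii); re-inserting $x_N e_N$ shows that $\psi_N$ has the same form. No serious obstacle is anticipated; the only point to be careful about is ensuring that the stratification-respecting ordering really rules out any feedback of high-index variables into low-index coefficients, and this follows directly from the grading of $[\mathfrak g_i,\mathfrak g_j]\subseteq \mathfrak g_{i+j}$ combined with our choice of basis.
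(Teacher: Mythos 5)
Your proposal is correct and follows essentially the same route as the paper: use the centrality of $e_N$ to absorb the last factor, expand the remaining product by the Baker--Campbell--Hausdorff formula, and use the grading $[\mathfrak g_i,\mathfrak g_j]\subseteq\mathfrak g_{i+j}$ together with the stratification-respecting ordering of the basis to see that bracket terms only feed into coefficients of higher-index basis vectors. Your write-up is in fact a bit more explicit than the paper's about why no high-index variable can appear in a low-index coefficient, but the argument is the same.
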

\begin{proof}
Since $e_N$ is central in $\mathfrak g$, 
$$
\exp\left(\sum_{j=3}^{N-1}x_je_j\right)\exp\left(x_1e_1+x_2e_2\right)\exp\left(x_Ne_N\right) = \exp\left(\sum_{j=3}^{N}x_je_j\right)\exp\left(x_1e_1+x_2e_2\right).
$$
In order to obtain the functions $\psi_j$, $j=1,\dots,N$, we can expand the right hand side of the formula above by means of the Baker--Campbell--Hausdorff formula. For every $j=1,\dots,N$,  $\psi_j$ will be the coefficient of $e_j$, which will be a linear combination of monomials of the form $x_{i_1}\cdot \ldots \cdot x_{i_k}$, whenever the bracket of $k$ vectors $e_{i_1},\dots, e_{i_k}$ taken in any order is a nonzero multiple of $e_j$. Since $\mathfrak g$ is stratified, 
it follows that $\psi_j({\boldsymbol \xi})=x_j$ if $j=1,2$. Furthermore, if $e_j\in{\mathfrak g}_h$, $2\leq h\leq 2k$, then $\psi_j({\boldsymbol \xi})=x_j+\varphi_j(x_1,\dots,x_{\ell})$, where 
$\varphi_j$ is a polynomial in the variables  $(x_1,\dots,x_{\ell})$ so that $e_\ell \in {\mathfrak g}_{h-1}$. This proves (i) and (ii).
\end{proof}

\begin{theorem}\label{mainOt}
The left-invariant vector fields on $(\mathbb R^N, \ast)$ defined by
$$
\tilde{X}_1 = \frac{d}{d\tau}_{\Big|_{t=0}} (x_1,x_2,x_3,\dots,x_{N-1},x_N)\ast (\tau,0,\dots,0)
$$
and
$$
\tilde{X}_2 = \frac{d}{d\tau}_{\Big|_{t=0}} (x_1,x_2,x_3,\dots,x_{N-1},x_N)\ast (0,\tau,\dots,0)
$$
have the form
$$
\tilde{X}_1 =  \partial_{x_1}+ \sum_{j=3}^{N-1} p_j(x_1,x_2)\partial_{x_j}       +x_2(x_1^2+x_2^2)^{k-1} \partial_{x_N},
$$
and 
$$
\tilde{X}_2 =\partial_{x_2}+ \sum_{\ell=3}^{N-1} q_\ell(x_1,x_2)\partial_{x_\ell}  -x_1(x_1^2+x_2^2)^{k-1} \partial_{x_N}
$$
for some polynomials $p_j$, $q_\ell$ and for $j,\ell=3,\dots,N-1$.
\end{theorem}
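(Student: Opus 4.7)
The plan rests on one structural observation that sharpens Lemma~\ref{Qj}'s predecessor: the subalgebra $\mathfrak{h}:=\bigoplus_{j=2}^{2k}\mathfrak{g}_j$ is in fact \emph{abelian}. Since every iterated bracket of $X_1,X_2$ of length $\geq 2$ has the form $f(x_1,x_2)\partial_{t}$ as a vector field on $\partial\Omega_k$, any two such fields commute; because $\mathfrak{g}$ is realised concretely as a Lie algebra of vector fields, this forces $[\mathfrak{h},\mathfrak{h}]=0$.

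With this secured, I compute $\tilde X_1$ directly. Centrality of $e_N$ lets me write $\Phi({\boldsymbol\xi})=\exp(\eta)\exp(v)$ where $\eta:=\sum_{j=3}^{N}x_j e_j\in\mathfrak{h}$ and $v:=x_1e_1+x_2e_2\in\mathfrak{g}_1$. The Baker--Campbell--Hausdorff formula gives $\exp(v)\exp(\tau e_1)=\exp(F(v,\tau))$; I split $F=u+\eta^{(1)}$ with $u:=(x_1+\tau)e_1+x_2e_2\in\mathfrak{g}_1$ and $\eta^{(1)}\in\mathfrak{h}$ polynomial in $(x_1,x_2,\tau)$ and vanishing at $\tau=0$. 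Using that $\mathfrak{h}$ is an ideal, one refactors $\exp(F)=\exp(\eta^{(2)})\exp(u)$ for a unique $\eta^{(2)}\in\mathfrak{h}$; abelianness of $\mathfrak{h}$ collapses the defining BCH series to the closed form
\[
\eta^{(2)}=\frac{e^{\operatorname{ad}(u)}-1}{\operatorname{ad}(u)}\bigl(\eta^{(1)}\bigr)=\sum_{n\geq0}\frac{1}{(n+1)!}\operatorname{ad}(u)^{n}\eta^{(1)},
\]
which is manifestly polynomial in $(x_1,x_2,\tau)$ with values in $\mathfrak{h}$.

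Finally, applying $[\mathfrak{h},\mathfrak{h}]=0$ once more, $\exp(\eta)\exp(\eta^{(2)})=\exp(\eta+\eta^{(2)})$, so $\Phi({\boldsymbol\xi})\exp(\tau e_1)=\exp(\eta+\eta^{(2)})\exp(u)$; uniqueness of the second-kind decomposition then reads off the new coordinates as $x_j'=x_j+\eta^{(2)}_j(x_1,x_2,\tau)$ for $3\leq j\leq N$, together with $x_1'=x_1+\tau$ and $x_2'=x_2$. Differentiating at $\tau=0$ produces
\[
\tilde X_1=\partial_{x_1}+\sum_{j=3}^{N}\Bigl(\tfrac{d\eta^{(2)}_j}{d\tau}\Big|_{\tau=0}\Bigr)\partial_{x_j},
\]
and every coefficient is a polynomial in $(x_1,x_2)$ alone, which is the form claimed. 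The specific value $x_2(x_1^2+x_2^2)^{k-1}$ of the $\partial_{x_N}$-coefficient is then obtained either by iterating $\operatorname{ad}(v)$ on $[v,e_1]$ in the closed formula down to the top stratum $\mathfrak{g}_{2k}$, or by invoking the consequence that $\tilde X_1$ must project to $X_1$ under $(x_1,\ldots,x_N)\mapsto(x_1,x_2,x_N)$. The argument for $\tilde X_2$ is entirely symmetric.

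The main obstacle is the rigorous upgrade of the lemma preceding Theorem~\ref{mainOt} from ``$\bigoplus_{j=2}^{2k-1}\mathfrak{g}_j$ is a subalgebra'' to the stronger abelianness $[\mathfrak{h},\mathfrak{h}]=0$, together with the bookkeeping of which BCH terms survive once this abelianness is applied to the two refactorisations above; once these are in place the remaining calculation is mechanical.
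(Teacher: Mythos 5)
Your structural observation is correct and gives a genuinely different (and in part cleaner) route to the \emph{form} of the lifts: since every iterated bracket of $X_1,X_2$ of length $\geq 2$ is of the form $f(x_1,x_2)\partial_t$, the ideal $\mathfrak h=\bigoplus_{j=2}^{2k}\mathfrak g_j$ is indeed abelian in the concrete realisation, and for an abelian ideal your refactorisation $\exp(u+\eta^{(1)})\exp(-u)=\exp\bigl(\frac{e^{\operatorname{ad}u}-1}{\operatorname{ad}u}\eta^{(1)}\bigr)$ is an exact identity. Differentiating your coordinates at $\tau=0$ reproduces precisely the paper's expression \eqref{generalLift}, namely $\sum_{\ell\geq1}\bigl(\frac1{\ell!}-\frac1{(\ell+1)!}\bigr)\operatorname{ad}^{\ell}(x_1e_1+x_2e_2)(e_1)$, which the paper instead obtains from the derivative formula $(2.14.2)$ of Varadarajan without needing abelianness (it only uses that $\bigoplus_{j=2}^{2k-1}\mathfrak g_j$ is a subalgebra and that $\mathfrak g_{2k}$ is central). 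So the claims that the $\partial_{x_1}$-coefficient is $1$ and that the middle coefficients are polynomials in $(x_1,x_2)$ alone are adequately handled.

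The gap is at the decisive step: identifying the $\partial_{x_N}$-coefficient as exactly $x_2(x_1^2+x_2^2)^{k-1}$, which is the substantive content of the theorem (it is what makes the lift agree with $X_1$ on $\partial\Omega_k$ and is what Remark~\ref{constantOnCosets}(i) and Proposition~\ref{metric equivalence} rely on). Your second proposed justification, ``invoking the consequence that $\tilde X_1$ must project to $X_1$ under $(x_1,\dots,x_N)\mapsto(x_1,x_2,x_N)$,'' is circular: that projection property is a consequence of this very theorem, recorded only afterwards, and nothing established beforehand provides it. Your first option is the correct one, but it is not mere bookkeeping: one must evaluate $\pi_{\mathfrak g_{2k}}\bigl(\operatorname{ad}^{2k-1}(x_1e_1+x_2e_2)(e_1)\bigr)$, and this requires knowing the relevant structure constants of $\mathfrak g$, i.e.\ the identity \eqref{bracketCenter}, ${\rm ad}^{2j}e_1\circ{\rm ad}^{2k-2-2j}e_2([e_2,e_1])=2k\binom{k-1}{j}(2j)!(2k-2-2j)!\,e_N$, which the paper derives from the vector-field realisation (using that $\operatorname{ad}e_1$, $\operatorname{ad}e_2$ act as $\partial_{x_1},\partial_{x_2}$ on the $\partial_t$-coefficients and hence commute on $\mathfrak h$), together with the multinomial expansion and the scalar normalisation $\frac1{(2k-1)!}-\frac1{(2k)!}=\frac1{2k(2k-2)!}$ that makes the binomial sums collapse to $x_2(x_1^2+x_2^2)^{k-1}$. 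As written, your proposal neither carries out this computation nor states the structure-constant identity it would need, so the explicit coefficient claimed in the theorem is not established.
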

\begin{proof}
We prove the statement for $\tilde{X}_1$. The proof for $\tilde{X}_2$ is identical.
The flow of $X_1$ is
$$
\Phi_\tau^{X_1} (x_1,x_2,x_N) = \big(x_1+\tau,x_2,x_N+\tau x_2(x_1^2+x_2^2)^{k-1}+O(\tau^2) \big).
$$
The statement is true if we show that there exist polynomials $y_j=y_j({\boldsymbol \xi})$, $j=2,\dots,N-1$ such that
\begin{align*}
 &\exp\left(\sum_{j=3}^{N-1}y_je_j\right) \exp\left((x_1+\tau)e_1+x_2e_2\right)\exp\left((x_N+\tau x_2(x_1^2+x_2^2)^{k-1}+O(\tau^2) )e_N\right)
 \\
 &\qquad=
 \exp\left(\sum_{j=3}^{N-1}x_je_j\right)\exp\left(x_1e_1+x_2e_2\right)\exp\left(x_N e_N\right) \exp(\tau e_1).
\end{align*}
The identity above is equivalent to
\begin{align*}
 &\exp\left(\sum_{j=3}^{N-1}y_je_j\right)\exp\left((\tau x_2(x_1^2+x_2^2)^{k-1}+O(\tau^2) )e_N\right)
\\
&\qquad=
\exp\left(\sum_{j=3}^{N-1}x_je_j\right)\exp\left(x_1e_1+x_2e_2\right)\exp(\tau e_1)\exp\left((-x_1-\tau)e_1-x_2e_2\right).
\end{align*}
We notice that the expansion of $\exp\left(x_1e_1+x_2e_2\right)\exp(\tau e_1)\exp\left((-x_1-\tau)e_1-x_2e_2\right)$ is in $\exp\left( \sum_{j=2}^{2k} \mathfrak{g}_j\right)$. Since $\mathfrak{g}$ is nilpotent, $\sum_{j=2}^{2k-1} \mathfrak{g}_j$ is a sub-algebra and $\mathfrak{g}_{2k}$ is central, all we need to show is the following statement.

\begin{claim}\label{claimOt}  The component along  $e_N$ of $\exp\left(x_1e_1+x_2e_2\right)\exp(\tau e_1)\exp\left((-x_1-\tau)e_1-x_2e_2\right)$ is $\tau x_2(x_1^2+x_2^2)^{k-1}+O(\tau^2)$.
\end{claim}

To this purpose, we extract more information on the brackets that generate $e_N$. 
By the binomial formula, we may write
$$
X_1 = \partial_{x_1} +\sum_{j=0}^{k-1} {k-1 \choose j} x_1^{2j} x_2^{2k-1-2j}\partial_{x_N}
$$
and 
$$
X_2 = \partial_{x_2} -\sum_{j=0}^{k-1} {k-1 \choose j} x_1^{2j+1} x_2^{2k-2-2j}\partial_{x_N}.
$$
Then 
$$
X_{21} := [X_2,X_1] = 2\sum_{j=0}^{k-1} k{k-1 \choose j}  x_1^{2j} x_2^{2k-2-2j} \partial_{x_N}.
$$
Notice that ${\rm ad} X_1\circ {\rm ad} X_2 (X_{21})= \partial_{x_1}\partial_{x_2} (X_{21})=\partial_{x_2}\partial_{x_1} (X_{21})=
{\rm ad} X_2\circ {\rm ad} X_1(X_{21})$.
In order to obtain a non-zero vector in ${\rm span}\{\partial_{x_N}\}$, we must apply ${\rm ad} X_1$ and ${\rm ad} X_2$ to $X_{21}$, $2j$ and $2k-2-2j$ times respectively, in any order, and for each $j=0,\dots,n$. When we do so, we obtain
$$
\left( 2 k{k-1 \choose j} (2j)! (2k-2-2j)! \right) \partial_{x_N},
$$
from which we obtain the formula
\begin{equation}\label{bracketCenter}
{\rm ad}^{2j}e_1\circ {\rm ad}^{2k-2-2j} e_2 ([e_2,e_1])=\left( 2k{k-1 \choose j} (2j)! (2k-2-2j)! \right) e_N.
\end{equation}
Notice that Claim \ref{claimOt} is equivalent to prove that 
\begin{align}\label{finalClaimOt}
&\pi_{\mathfrak{g}_{2k}} \left( \frac{d}{d\tau}{\Big |}_{\tau =0}\exp\left(x_1e_1+x_2e_2\right)\exp(\tau e_1)\exp\left((-x_1-\tau)e_1-x_2e_2\right)\right) \\
&= x_2(x_1^2+x_2^2)^{k-1}e_N,\nonumber
\end{align}
where $\pi_{\mathfrak{g}_{2k}}: \mathfrak{g} \to \mathfrak{g}_{2k}$ is the canonical projection.
We  write 
$$\exp\left(x_1e_1+x_2e_2\right)\exp(\tau e_1)\exp\left((-x_1-\tau)e_1-x_2e_2\right)=\gamma_1(\tau) \gamma_2(\tau),$$ 
with 
$$\gamma_1(\tau) = \exp\left(x_1e_1+x_2e_2\right)\exp(\tau e_1)\exp\left(-x_1e_1-x_2e_2\right)$$
 and 
 $$\gamma_2(\tau) =\exp\left(x_1e_1+x_2e_2\right)\exp\left((-x_1-\tau)e_1-x_2e_2\right).$$
Notice that $\gamma_1(0) =\gamma_2(0) =e$.
Using the product rule and formula $(2.14.2)$ in \cite{VaradarajanLieGroups}, and the fact that $\mathfrak{g}$ is nilpotent of step $2k$, we have that
\begin{align}\label{generalLift}
\frac{d}{d\tau}{\Big |}_{\tau=0}&\exp\left(x_1e_1+x_2e_2\right)\exp(\tau e_1)\exp\left((-x_1-\tau)e_1-x_2e_2\right)\nonumber\\
&= \frac{d}{d\tau}{\Big |}_{\tau=0}\gamma_1(\tau)\gamma_2(\tau)\nonumber \\
&=e^{{\rm ad}(x_1e_1+x_2e_2)}(e_1)-\sum_{j=0}^{2k-1} \frac{(-1)^j}{(j+1)!}{\rm ad}^j(-x_1e_1-x_2e_2)(e_1)\nonumber\\
&= \sum_{\ell =0}^{2k-1}  \frac{1}{\ell!}{\rm ad}^\ell(x_1e_1+x_2e_2)(e_1)-\sum_{j=0}^{2k-1} \frac{1}{(j+1)!}{\rm ad}^j(x_1e_1+x_2e_2)(e_1)\nonumber\\
&= \sum_{\ell =0}^{2k-1} \left( \frac{1}{\ell!}-  \frac{1}{(\ell+1)!}\right){\rm ad}^\ell(x_1e_1+x_2e_2)(e_1).
\end{align}
Applying $\pi_{\mathfrak{g}_{2k}}$, we obtain that 
\begin{align*}
&\pi_{\mathfrak{g}_{2k}}\left(\frac{d}{d\tau}{\Big |}_{\tau=0}\exp\left(x_1e_1+x_2e_2\right)\exp(\tau e_1)\exp\left((-x_1-\tau)e_1-x_2e_2\right)\right) \\
&= \frac{{\rm ad}^{2k-1}(x_1e_1+x_2e_2)(e_1)}{2k(2k-2)!}.
\end{align*}
Next, \eqref{bracketCenter} implies that
\begin{align*}
{{\rm ad}^{2k-1}(x_1e_1+x_2e_2)(e_1)}&=x_2{{\rm ad}^{2k-2}(x_1e_1+x_2e_2)(e_{21})}\\
&=x_2 \sum_{j=0}^{2k-2}{2k-2 \choose j} {\rm ad}^j {(x_1e_1)}\circ {\rm ad}^{2k-2-j}(x_2e_2)(e_{21})\\
&=x_2 \sum_{j=0}^{k-1}{2k-2 \choose 2j} {\rm ad}^{2j} {(x_1e_1)}\circ {\rm ad}^{2k-2-2j}(x_2e_2)\\
&= x_2\sum_{j=0}^{k-1}2k{2k-2 \choose 2j} {k-1\choose j}(2j)!(2k-2-2j)x_1^{2j}x_2^{2k-2-2j}e_N\\
&=2k(2k-2)! x_2 \sum_{j=0}^{k-1} {k-1\choose j} x_1^{2j}x_2^{2k-2-2j}e_N\\
&= 2k(2k-2)! x_2 (x_1^2+x_2^2)^{k-1}e_N.
\end{align*}
Hence, 
$$
\frac{{\rm ad}^{2k-1}(x_1e_1+x_2e_2)(e_1)}{2k(2k-2)!} = x_2 (x_1^2+x_2^2)^{k-1}e_N
$$
and \eqref{finalClaimOt} follows, concluding the proof.
\end{proof}

\begin{example}
We consider the case $k=2$ and show the construction of the Lie algebra $\mathfrak g$ and the lift vector fields $\tilde{X}_j$, $j=1,2$. We start with the vector fields in $\mathbb R^3$ given by
$$ X_1= \partial_{x_1}+x_2(x_1^2+x_2^2) \partial_{t}, $$
$$ X_2= \partial_{x_2} -x_1(x_1^2+x_2^2) \partial_{t}. $$
The complex vector field $L=X_1+iX_2$ defines a CR-structure on the manifold
$$
\partial\Omega_2= \Big\{(z,w)\in {\mathbb C}^2: {\rm Im}w = \frac{1}{4}(x_1^2+x_2^2)^{2}\Big\},
$$
where $z=x_1+ix_2$ and ${\rm Re}(w)= t$.
The vector fields $X_1$ and $X_2$, for $x_1^2+x_2^2\neq 0$, form a $6$-dimensional  Lie algebra 
$\mathfrak{g}={\rm span}\{e_1,\dots,e_6\}$ where the nontrivial brackets are given by
$$
e_3=[e_2,e_1], \quad e_4=[e_3,e_1], \quad e_5=[e_3,e_2],\quad [e_4,e_1]=[e_5,e_2]=8e_6.
$$
Denoted by $G$ the connected and simply connected Lie group with Lie algebra $\mathfrak{g}$, it is easy to compute the left-invariant vector fields on G corresponding to $e_1$ and $e_2$. Using the coordinates defined by $\Phi$ and \eqref{generalLift}, we obtain
$$
\tilde{X}_1 =  \partial_{x_1}+ \frac{x_2}{2}\partial_{x_3}-\frac{1}{3}x_1x_2 \partial_{x_4}-\frac{1}{3}x^2_2 \partial_{x_5}+x_2(x_1^2+x_2^2) \partial_{t}
$$
and 
$$
\tilde{X}_2 =  \partial_{x_2}- \frac{x_1}{2}\partial_{x_3}+\frac{1}{3}x^2_2 \partial_{x_4}+\frac{1}{3}x_1x_2 \partial_{x_5}-x_1(x_1^2+x_2^2) \partial_{t}.
$$
\end{example}

\begin{remark}\label{constantOnCosets}
 $(i)$ Notice that for $j=1,2$ we have that $\tilde{X}_j = X_j$ on functions constant  in the  variables $x_3,\dots,x_{N-1}$.
 
$(ii)$ Once we fix a basis of the Lie algebra $\mathfrak{g}$, we can use the formula \eqref{generalLift} in the proof of Theorem~\ref{mainOt} to compute explicitly $\tilde{X}_1$ and $\tilde{X}_2$ like we did in the example above.
\end{remark}

\subsection{First order Taylor's expansion}
We will need to approximate functions on $\partial\Omega_k$ by their first order Taylor expansion. In order to do that, we first obtain a first order approximation with an estimate of the remainder for the group $G$ introduced earlier. This is an easy consequence of \cite[Corollary 1.44]{FS}.
We remind that 
polynomials on nilpotent groups are well defined objects, in the sense that if a function
on G is polynomial in exponential coordinates of the first type, then it is also polynomial in every system of exponential coordinates of the second type.

\begin{lemma}\label{TaylorForG}
There exist positive constants $b,C$ such that for all twice differentiable functions $F$  on $(\mathbb R^N, \ast)$,
$$
| F({\boldsymbol \xi}{\boldsymbol \eta}) - P_{\boldsymbol \xi}({\boldsymbol \eta}) | \leq C \tilde{d}_{cc}({\boldsymbol \eta},{\bf 0})^2 \sup_{\tilde{d}_{cc}({\boldsymbol \zeta},{\bf 0})\leq b\tilde{d}_{cc}({\boldsymbol \eta},{\bf 0}), i,j=1,2} | \tilde{X}_i \tilde{X}_j F({\boldsymbol \xi}{\boldsymbol \zeta}) |,
$$
for all ${\boldsymbol \xi},{\boldsymbol \eta}\in \mathbb R^N$,
where $P_{\boldsymbol \xi}({\boldsymbol \eta}) = F({\boldsymbol \xi})+ y_1 \tilde{X}_1 F({\boldsymbol \xi}) + y_2 \tilde{X}_2 F({\boldsymbol \xi})$.
\end{lemma}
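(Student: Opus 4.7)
The plan is to invoke the Taylor expansion theorem on homogeneous groups, \cite[Corollary~1.44]{FS}, applied to $F$ on the stratified nilpotent Lie group $(\mathbb{R}^N, \ast)$ with its dilations $\delta_\lambda$. First I would fix a smooth homogeneous norm $\|\cdot\|$ on $(\mathbb{R}^N,\ast)$. On any stratified Lie group, such a norm is equivalent to the Carnot--Carath\'eodory distance from the identity, so at the very end of the argument the replacement $\|\boldsymbol{\eta}\| \approx \tilde{d}_{cc}(\boldsymbol{\eta}, \mathbf{0})$ will convert the Folland--Stein estimate into the statement of the lemma, merely adjusting the constants $C$ and $b$.

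Next I would extend $\tilde{X}_1, \tilde{X}_2$ to a basis $\{Y_1, \dots, Y_N\}$ of left-invariant vector fields with $Y_j$ corresponding to $e_j$, assigning to $Y_j$ the homogeneous degree $d_j$ dictated by the stratum of $e_j$. For twice differentiable $F$, Folland--Stein then produces
$$
|F(\boldsymbol{\xi}\boldsymbol{\eta}) - P^{(1)}_{\boldsymbol{\xi}}(\boldsymbol{\eta})| \leq C\|\boldsymbol{\eta}\|^2 \sup_{\|\boldsymbol{\zeta}\| \leq b\|\boldsymbol{\eta}\|}\,\sum_{d(I) = 2}|Y^I F(\boldsymbol{\xi}\boldsymbol{\zeta})|,
$$
where $P^{(1)}_{\boldsymbol{\xi}}$ is the left-invariant Taylor polynomial of homogeneous degree at most one. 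Since only $Y_1=\tilde{X}_1$ and $Y_2=\tilde{X}_2$ have degree one (they span $\mathfrak{g}_1$), and since in our exponential coordinates of the second kind the coefficient of $\tilde{X}_j F(\boldsymbol{\xi})$ in the degree-one part is precisely $y_j$ for $j = 1, 2$, I expect $P^{(1)}_{\boldsymbol{\xi}}(\boldsymbol{\eta}) = P_{\boldsymbol{\xi}}(\boldsymbol{\eta})$ on the nose.

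The main step---the only one that is not routine---will be to control the remainder sum over multi-indices $I$ with $d(I) = 2$ by the quantity $\sup_{i,j \in \{1,2\}} |\tilde{X}_i \tilde{X}_j F|$ in the statement. Such an $I$ either corresponds to a product $\tilde{X}_i \tilde{X}_j$ (homogeneous degree $1+1$) or to the single degree-two vector field $Y_3$ associated to the basis element $e_3 = [e_2, e_1]$ of $\mathfrak{g}_2$. For the latter, $Y_3 = [\tilde{X}_2, \tilde{X}_1] = \tilde{X}_2 \tilde{X}_1 - \tilde{X}_1 \tilde{X}_2$, hence
$$
|Y_3 F(\boldsymbol{\xi}\boldsymbol{\zeta})| \leq 2 \max_{i,j \in \{1,2\}}|\tilde{X}_i \tilde{X}_j F(\boldsymbol{\xi}\boldsymbol{\zeta})|,
$$
absorbing the non-product term into the horizontal second-order derivatives. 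Combining this absorption with the norm equivalence will then yield the desired estimate.
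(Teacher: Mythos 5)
Your proposal is correct and follows essentially the same route as the paper: both rest on the stratified Taylor inequality of Folland--Stein (Corollary 1.44), together with the identification of the degree-one Taylor coefficients with $y_1,y_2$ (exactly the content of Lemma~\ref{Qj}(i), relating first- and second-kind exponential coordinates) and the comparison of the homogeneous norm, respectively the first-kind Carnot--Carath\'eodory distance, with $\tilde{d}_{cc}$. The only difference is cosmetic: Corollary 1.44 already bounds the remainder by iterated \emph{horizontal} derivatives $\tilde{X}_i\tilde{X}_j$, $i,j=1,2$, alone, so your extra step absorbing the stratum-two field $Y_3=[\tilde{X}_2,\tilde{X}_1]$ into $\max_{i,j}|\tilde{X}_i\tilde{X}_jF|$ is harmless but redundant.
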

\begin{proof}
From \cite[Corollary 1.44]{FS},  then
there exist $b,C\in \mathbb R$ such that
\begin{equation}\label{TaylorForFirstType}
| F(pq) - P_p(q) | \leq C' \bar{d}_{cc}(q,0)^2 \sup_{\bar{d}_{cc}(r,0)\leq b'\bar{d}_{cc}(q,0), i,j=1,2} | \bar{X}_i \bar{X}_j F(pr) |,
\end{equation}
for all $p=\exp(\sum_{j=1}^N u_i e_i)$ and $q=\exp(\sum_{j=1}^N v_i e_i)$.
Here
$P_p(q) = F(p)+ v_1 \bar{X}_1 F(p) + v_2 \bar{X}_2 F(p)$.
 From Lemma~\ref{Qj}, if
 $$\Phi({\boldsymbol \xi}) = \exp\left(\sum_{j=3}^{N-1}x_je_j\right)\exp\left(x_1e_1+x_2e_2\right)\exp\left(x_N e_N\right),$$ 
then $\Phi({\boldsymbol \xi}) = \exp\left(x_1e_1+x_2e_2+ \sum_{j=3}^N (x_j+ \varphi_j(x_1,\dots,x_\ell))e_j\right)$ for some polynomials $\varphi_j$, $j=3,\dots,N$, and some $\ell<j$. 
Apply $\eqref{TaylorForFirstType}$ to the points 
$$p= \exp\left(x_1e_1+x_2e_2+ \sum_{j=3}^N \varphi_j({\boldsymbol \xi})e_j\right),$$
 $$q= \exp\left(y_1e_1+y_2e_2+ \sum_{j=3}^N \varphi_j({\boldsymbol \eta})e_j\right),$$
 and
 $$r= \exp\left(z_1e_1+z_2e_2+ \sum_{j=3}^N \varphi_j({\boldsymbol \zeta})e_j\right).$$
Notice that $\bar{X}_jF(\Phi(\cdot))=\tilde{X}_jF(\cdot)$, $j=1,2$.
This yields 
$F(pq) - P_p(q) = F({{\boldsymbol \xi}{\boldsymbol \eta}}) -P_{\boldsymbol \xi}({\boldsymbol \eta})$ and $ \bar{X}_i \bar{X}_j F(pr)=\tilde{X}_i \tilde{X}_j F({\boldsymbol \xi}{\boldsymbol \zeta})$. 
Moreover, \eqref{isometry} implies that $\bar{d}_{cc}(\Phi({\cdot}),e)=\tilde{d}_{cc}(\cdot,{\bf 0})$, which concludes the proof.
\end{proof}

\section{Taylor expansion on $\partial \Omega_k$}\label{Sec: Tay}
In this section, we will adapt Lemma~\ref{TaylorForG} to $\partial \Omega_k$. While we can obtain estimates for the remainder of the Taylor polynomial of any order, in this paper we will only need the remainder of order 2. For notational convenience, we will then only describe the Taylor polynomial of order $1$ for functions on $\partial \Omega_k$ with remainder of order $2$.
In order to adapt Lemma~\ref{TaylorForG} to $\partial \Omega_k$, we  need to establish a relation between the Carnot--Carath\'eodory distance on 
$\partial \Omega_k$ and that on $G$, by following ideas of \cite{Nagel-Stein-Wainger} and \cite{San}.

Define 
\begin{align*}
d_{cc}( {\bf x}, {\bf y}) &= \inf\{\tau: \exists\ {\rm horizontal \ curve\ }\ \gamma: [0,\tau]\to\partial\Omega_k \ {\rm s.t.\ } \ \|\dot\gamma(s)\|_{cc}\leq1\\
 &\hskip2cm  {\rm\ for\ } s\in[0,\tau], \gamma(0)={\bf x}, \gamma(\tau)={\bf y} \},
\end{align*}
where
$$ \|c_1X_1+c_2X_2\|_{cc}:= \sqrt{c_1^2+c_2^2}.  $$
We denote by $B_{d_{cc}}$ and $B_{\tilde{d}_{cc}}$ the balls for the metrics $d_{cc}$ and $\tilde{d}_{cc}$, respectively.
We define the embedding $\Theta: \partial\Omega_k\to (\mathbb R^N,\ast)$ as follows. 
For every ${\bf x}=(\xi_1,\xi_2,\xi_N)\in \partial\Omega_k$, let $\Theta({\bf x}) = (\xi_1,\xi_2,0,\dots,0,\xi_N)$. The inverse $\Theta^{-1}$  is the canonical projection restricted to $\Theta(\partial\Omega_k)$. 
\begin{proposition}\label{metric equivalence}
For every ${\bf x},{\bf y}\in\partial\Omega_k$, we have $\tilde{d}_{cc}(\Theta({\bf x}),\Theta({\bf y}))= d_{cc}({\bf x},{\bf y})$.
\end{proposition}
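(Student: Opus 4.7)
The plan is to prove the equality by establishing the two inequalities separately. Introduce the canonical projection $\pi:(\mathbb R^N,\ast)\to \partial\Omega_k$ by $\pi(x_1,x_2,x_3,\ldots,x_{N-1},x_N):=(x_1,x_2,x_N)$, so that $\pi\circ\Theta$ is the identity on $\partial\Omega_k$. The key structural fact, visible from Theorem~\ref{mainOt} and Remark~\ref{constantOnCosets}(i), is that $\tilde X_j$ and $X_j$ share exactly the same coefficients in the directions $\partial_{x_1},\partial_{x_2},\partial_{x_N}$, and these coefficients depend only on $x_1$ and $x_2$. In particular, the evolution of the coordinates $(x_1,x_2,x_N)$ under the horizontal flow closes onto itself, and the resulting ODE is the same whether one works on $(\mathbb R^N,\ast)$ or on $\partial\Omega_k$.

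For the inequality $d_{cc}(\mathbf x,\mathbf y)\le \tilde d_{cc}(\Theta(\mathbf x),\Theta(\mathbf y))$, pick any horizontal curve $\tilde\gamma:[0,\rho]\to (\mathbb R^N,\ast)$ with controls $(c_1,c_2)$ satisfying $\|\dot{\tilde\gamma}\|_{\tilde{cc}}\le 1$, joining $\Theta(\mathbf x)$ to $\Theta(\mathbf y)$. Setting $\gamma:=\pi\circ\tilde\gamma$, the closure property above yields $\dot\gamma=c_1X_1(\gamma)+c_2X_2(\gamma)$ almost everywhere, so $\gamma$ is a horizontal curve on $\partial\Omega_k$ from $\pi(\Theta(\mathbf x))=\mathbf x$ to $\pi(\Theta(\mathbf y))=\mathbf y$ with $\|\dot\gamma\|_{cc}=\|\dot{\tilde\gamma}\|_{\tilde{cc}}\le 1$, hence of length $\le\rho$. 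Taking the infimum over $\tilde\gamma$ yields the inequality.

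For the reverse inequality, given a horizontal curve $\gamma:[0,\tau]\to\partial\Omega_k$ from $\mathbf x$ to $\mathbf y$ with controls $(c_1,c_2)$, the natural construction is to define $\tilde\gamma$ as the solution on $(\mathbb R^N,\ast)$ of $\dot{\tilde\gamma}=c_1\tilde X_1(\tilde\gamma)+c_2\tilde X_2(\tilde\gamma)$ with $\tilde\gamma(0)=\Theta(\mathbf x)$. This curve is horizontal with $\|\dot{\tilde\gamma}\|_{\tilde{cc}}=\|\dot\gamma\|_{cc}$, hence of the same length $\tau$; the closure property forces $\pi(\tilde\gamma(s))=\gamma(s)$ for all $s$, so at the endpoint the coordinates $x_1,x_2,x_N$ of $\tilde\gamma(\tau)$ agree with those of $\Theta(\mathbf y)$. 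The main obstacle, and the heart of the argument, is to verify that the intermediate coordinates $\tilde\gamma_j(\tau)$, $j=3,\ldots,N-1$, also vanish, so that $\tilde\gamma(\tau)=\Theta(\mathbf y)$ and therefore $\tilde d_{cc}(\Theta(\mathbf x),\Theta(\mathbf y))\le\tau$. To handle this I would first reduce to $\mathbf x=\mathbf 0$ by left-invariance of $\tilde d_{cc}$, then exploit the explicit form of $\Phi$ provided by Lemma~\ref{Qj} together with the stratified dilations on $\mathfrak g$ and the ball-box type arguments of \cite{Nagel-Stein-Wainger} and \cite{San} to control the intermediate coordinates along the lift. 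Combining both inequalities then yields the claimed equality.
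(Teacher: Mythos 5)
Your first inequality, $d_{cc}(\mathbf x,\mathbf y)\le \tilde d_{cc}(\Theta(\mathbf x),\Theta(\mathbf y))$, is established exactly as in the paper: one projects a horizontal curve of $(\mathbb R^N,\ast)$ onto the coordinates $(x_1,x_2,x_N)$ and uses that the coefficients of $\tilde X_1,\tilde X_2$ in the directions $\partial_{x_1},\partial_{x_2},\partial_{x_N}$ coincide with those of $X_1,X_2$ and depend only on $(x_1,x_2)$. That half of your argument is complete and matches the paper.

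The reverse inequality is where your proposal has a genuine gap, and the step you yourself call ``the main obstacle'' is not something that left-invariance, the dilations, or the ball--box estimates of \cite{Nagel-Stein-Wainger,San} can absorb. First, those estimates compare distances only up to multiplicative constants, so they could never yield the exact equality asserted in the proposition. Second, and more seriously, the intermediate coordinates of your lift genuinely fail to vanish at the endpoint, and no alternative competitor of the same length exists. Concretely, for $k=2$ in the coordinates of the example following Theorem~\ref{mainOt}, take $\mathbf x=(a_1,a_2,t_0)$ with $a_2\neq0$, let $\gamma$ be the integral curve of $X_1$ on $[0,\tau]$ and $\mathbf y=\gamma(\tau)$; then $d_{cc}(\mathbf x,\mathbf y)=\tau$ and the planar displacement from $\Theta(\mathbf x)$ to $\Theta(\mathbf y)$ is $(\tau,0)$. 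Any horizontal curve in $(\mathbb R^N,\ast)$ joining these two points in time at most $\tau$ has planar velocity $(\alpha_1,\alpha_2)$ with $\alpha_1^2+\alpha_2^2\le1$, hence is forced to satisfy $(\alpha_1,\alpha_2)=(1,0)$ a.e.; it is therefore exactly your lift, whose $x_3$-coordinate at time $\tau$ equals $a_2\tau/2\neq0$ (since $p_3(x_1,x_2)=x_2/2$), whereas $\Theta(\mathbf y)$ has third coordinate $0$. So there is no admissible curve of length $\tau$ at all, and since minimizers exist for the left-invariant Carnot--Carath\'eodory metric on $G$, one even gets $\tilde d_{cc}(\Theta(\mathbf x),\Theta(\mathbf y))>d_{cc}(\mathbf x,\mathbf y)$ for such pairs. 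Hence the route you outline cannot be completed to an exact equality; at best one can aim for a two-sided comparison $d_{cc}(\mathbf x,\mathbf y)\le\tilde d_{cc}(\Theta(\mathbf x),\Theta(\mathbf y))\le C\,d_{cc}(\mathbf x,\mathbf y)$, which is also what the later application in Theorem~\ref{TaylorForOmega} actually requires. For comparison, the paper handles this direction in a single sentence, asserting that every horizontal curve on $\partial\Omega_k$ is horizontal on $\mathbb R^N$; that assertion presupposes precisely the point you flag, since $\alpha_1p_j+\alpha_2q_j$ need not vanish along $\Theta\circ\gamma$ for $3\le j\le N-1$. So your diagnosis of where the difficulty lies is correct, but your proposal does not supply the missing argument, and no argument of the proposed kind can deliver the equality as stated.
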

\begin{proof}

Let ${\bf x},{\bf y}\in\partial\Omega_k$.
Since every horizontal curve on $\partial\Omega_k$ is horizontal on $\mathbb R^N$, it follows that $\tilde{d}_{cc}(\Theta({\bf x}),\Theta({\bf y}))\leq d_{cc}({\bf x},{\bf y})$.

Next, let $\tilde \gamma: [0,\tau]\to \mathbb R^N$ be a horizontal curve in $\mathbb R^N$ with $\|\tilde\gamma(s)\|_{\tilde{cc}}\leq1$, $\tilde \gamma(0)= \Theta({\bf x})$ and  $\tilde \gamma(\tau)= \Theta({\bf y})$. In particular, 
$$ \dot{ \tilde{\gamma}}(s) =\alpha_1(s)\tilde X_1+\alpha_2(s)\tilde X_2 $$
with
$$ \alpha_1(s)^2+\alpha_2(s)^2\leq1, \quad \forall s\in[0,\tau]. $$
Using~Theorem \ref{mainOt}, it follows that the components of $\dot{ \tilde{\gamma}}$ along $e_1,e_2$, and $e_N$ are
\begin{equation*}
\left\{ \begin{aligned} 
  \dot{ \tilde{\gamma}}_1(s) &= \alpha_1(s),\\
  \dot{ \tilde{\gamma}}_2(s) &= \alpha_2(s),\\
    \dot{ \tilde{\gamma}}_N(s) &= \alpha_1(s)\tilde{\gamma}_2(s)\Big[ (\tilde{\gamma}_1(s))^2+(\tilde{\gamma}_2(s))^2 \Big]^{k-1}-\alpha_2(s)\tilde{\gamma}_1(s)\Big[ (\tilde{\gamma}_1(s))^2+(\tilde{\gamma}_2(s))^2 \Big]^{k-1}.
\end{aligned} \right.
\end{equation*}
The projection of ${ \tilde{\gamma}}$ onto $\partial\Omega_k$ is the curve $\gamma :[0,\tau]\to\partial\Omega_k$  such that 
$$\gamma(s)=( \tilde{\gamma}_1(s),\tilde{\gamma}_2(s),\tilde{\gamma}_N(s) )\in\partial\Omega_k$$ and
\begin{equation*}
\left\{ \begin{aligned} 
  \dot{ \tilde{\gamma}}_1(s) &= \alpha_1(s),\\
  \dot{ \tilde{\gamma}}_2(s) &= \alpha_2(s),\\
    \dot{ \tilde{\gamma}}_N(s) &= \alpha_1(s)\tilde{\gamma}_2(s)\Big[ (\tilde{\gamma}_1(s))^2+(\tilde{\gamma}_2(s))^2 \Big]^{k-1}-\alpha_2(s)\tilde{\gamma}_1(s)\Big[ (\tilde{\gamma}_1(s))^2+(\tilde{\gamma}_2(s))^2 \Big]^{k-1}.
\end{aligned} \right.
\end{equation*}
Then it is direct that $\gamma(0)={\bf x}$ and $\gamma(\tau)={\bf y}$.
Moreover, we have
\begin{align*}
  \dot{\gamma}(s) &=\dot{ \tilde{\gamma}}_1(s){\partial_{x_1}}+\dot{ \tilde{\gamma}}_2(s){\partial_{x_2}}+\dot{ \tilde{\gamma}}_N(s){\partial_{x_N}}\\
  &=\alpha_1(s){\partial_{x_1}}+\alpha_2(s){\partial_{x_2}}\\
  &\quad+\Bigg[\alpha_1(s)\tilde{\gamma}_2(s)\Big[ (\tilde{\gamma}_1(s))^2+(\tilde{\gamma}_2(s))^2 \Big]^{k-1}-\alpha_2(s)\tilde{\gamma}_1(s)\Big[ (\tilde{\gamma}_1(s))^2+(\tilde{\gamma}_2(s))^2 \Big]^{k-1}\Bigg]{\partial_{x_N}}\\
  &= \alpha_1(s)X_1+\alpha_2(s)X_2,
 \end{align*}
where the last equality follows from the definition of $X_1$ and $X_2$.

Thus, $\gamma$ is a horizontal curve on $\partial\Omega_k$ with $\|\gamma\|_{cc}\leq1$ and $\tau\geq d_{cc}({\bf x},{\bf y})$. By definition, we conclude that
$\tilde{d}_{cc}(\Theta({\bf x}),\Theta({\bf y}))\geq d_{cc}({\bf x},{\bf y})$.
\end{proof}

We can now prove the following result for the first order Taylor polynomial on $\partial\Omega_k$.
\begin{theorem}\label{TaylorForOmega}
There exist positive constants $b,C$ such that for all twice differentiable functions $f$  on $\partial\Omega_k$,
$$
| f({\bf y}) - P_{\bf x}({{\bf y}-{\bf x}}) | \leq C d_{cc}({\bf x},{{\bf y}})^2 \sup_{d_{cc}({\bf x},{{\bf z}})\leq b\,d_{cc}({\bf x},{{\bf y}}), i,j=1,2} | {X}_i {X}_j f({{\bf z}}) |,
$$
for all ${\bf x},{{\bf y}}\in \partial\Omega_k$, and with
$P_{\bf x}({\bf y}-{\bf x}) = f({\bf x})+(y_1-{x}_1)X_1f({\bf x})+(y_2-{x}_2)X_2f({\bf x})$.
\end{theorem}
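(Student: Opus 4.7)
The plan is to extend $f$ from $\partial\Omega_k$ to a function $F$ on $(\mathbb R^N,\ast)$, apply the group-level Taylor estimate of Lemma~\ref{TaylorForG}, and then push the conclusion back down to $\partial\Omega_k$ using the embedding $\Theta$ and Proposition~\ref{metric equivalence}. Concretely, identifying $\partial\Omega_k$ with $\mathbb R^3$ via ${\bf x}=(x_1,x_2,x_N)$, I would define $F(\boldsymbol{\xi}):=f(\xi_1,\xi_2,\xi_N)$ for every $\boldsymbol{\xi}\in\mathbb R^N$, so that $F$ depends only on the first two and last coordinates. Writing $\pi(\boldsymbol{\xi}):=(\xi_1,\xi_2,\xi_N)$ for the coordinate projection, Remark~\ref{constantOnCosets}(i) gives $\tilde{X}_jF(\boldsymbol{\xi})=X_jf(\pi(\boldsymbol{\xi}))$, and since $\tilde{X}_jF$ remains constant in the middle coordinates, iterating yields $\tilde{X}_i\tilde{X}_jF(\boldsymbol{\xi})=X_iX_jf(\pi(\boldsymbol{\xi}))$ for all $i,j\in\{1,2\}$.

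Next I would take $\boldsymbol{\xi}=\Theta({\bf x})$ and $\boldsymbol{\eta}=\Theta({\bf x})^{-1}\ast\Theta({\bf y})$, so that $\boldsymbol{\xi}\ast\boldsymbol{\eta}=\Theta({\bf y})$. Because $e_1,e_2$ span the first stratum $\mathfrak g_1$ while all higher Baker--Campbell--Hausdorff brackets land in $\sum_{j\geq 2}\mathfrak g_j$, projecting $\Phi(\boldsymbol{\xi})\Phi(\boldsymbol{\eta})$ to the abelianization $\mathfrak g/\sum_{j\geq 2}\mathfrak g_j\cong\mathbb R^2$ gives $\xi_j+\eta_j$ in the first two coordinates; hence $\eta_1=y_1-x_1$ and $\eta_2=y_2-x_2$. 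Applying Lemma~\ref{TaylorForG} to $F$ at $\boldsymbol{\xi}$ and $\boldsymbol{\eta}$ now rewrites its left-hand side as $|f({\bf y})-f({\bf x})-(y_1-x_1)X_1f({\bf x})-(y_2-x_2)X_2f({\bf x})| = |f({\bf y}) - P_{\bf x}({\bf y}-{\bf x})|$, while left-invariance of $\tilde{d}_{cc}$ combined with Proposition~\ref{metric equivalence} yields
\[
\tilde{d}_{cc}(\boldsymbol{\eta},{\bf 0}) \;=\; \tilde{d}_{cc}(\Theta({\bf x}),\Theta({\bf y})) \;=\; d_{cc}({\bf x},{\bf y}),
\]
which takes care of the quadratic prefactor.

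To handle the supremum, for any $\boldsymbol{\zeta}$ satisfying $\tilde{d}_{cc}(\boldsymbol{\zeta},{\bf 0})\leq b\,d_{cc}({\bf x},{\bf y})$, set ${\bf z}:=\pi(\boldsymbol{\xi}\ast\boldsymbol{\zeta})\in\partial\Omega_k$. The previous identity gives $\tilde{X}_i\tilde{X}_jF(\boldsymbol{\xi}\ast\boldsymbol{\zeta})=X_iX_jf({\bf z})$, and the same projection argument used in the proof of Proposition~\ref{metric equivalence} --- namely, $\pi$ sends any horizontal curve on $(\mathbb R^N,\ast)$ starting at $\Theta({\bf x})$ to a horizontal curve on $\partial\Omega_k$ starting at ${\bf x}$ of equal speed --- yields
\[
d_{cc}({\bf x},{\bf z}) \;\leq\; \tilde{d}_{cc}(\boldsymbol{\xi},\boldsymbol{\xi}\ast\boldsymbol{\zeta}) \;=\; \tilde{d}_{cc}({\bf 0},\boldsymbol{\zeta}) \;\leq\; b\,d_{cc}({\bf x},{\bf y}).
\]
Thus the group-level supremum is dominated by $\sup\{|X_iX_jf({\bf z})| : d_{cc}({\bf x},{\bf z})\leq b\,d_{cc}({\bf x},{\bf y}),\,i,j=1,2\}$, and substitution completes the proof with the same constants $b,C$ inherited from Lemma~\ref{TaylorForG}.

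The main obstacle is essentially bookkeeping rather than conceptual: one must match coordinates so that the first two components of $\boldsymbol{\eta}=\Theta({\bf x})^{-1}\ast\Theta({\bf y})$ are literally $y_1-x_1$ and $y_2-x_2$ (this is exactly where the specific coordinates of the second kind from \eqref{PsiDiff} are used, via the abelianization observation), and check that $\tilde{X}_i\tilde{X}_jF$ agrees with $X_iX_jf\circ\pi$ on the full group ball, not merely on $\Theta(\partial\Omega_k)$. Both points are immediate from the structure of $\mathfrak g$ and from Remark~\ref{constantOnCosets}(i), so no new analytic input beyond Lemma~\ref{TaylorForG} and Proposition~\ref{metric equivalence} is needed.
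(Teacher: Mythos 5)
Your proposal is correct and follows essentially the same route as the paper: extend $f$ to a function $F$ on $(\mathbb R^N,\ast)$ constant in the middle coordinates, apply Lemma~\ref{TaylorForG} at $\boldsymbol\xi=\Theta({\bf x})$ with $\boldsymbol\eta=\Theta({\bf x})^{-1}\ast\Theta({\bf y})$, identify the first two components of $\boldsymbol\eta$ with $y_1-x_1,\,y_2-x_2$ via the stratification (the paper cites Lemma~\ref{Qj}(i) for this), use Remark~\ref{constantOnCosets}(i) for $\tilde X_i\tilde X_jF=X_iX_jf\circ\pi$, and control the distances and the supremum via Proposition~\ref{metric equivalence} and the horizontal-curve projection argument. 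The only cosmetic difference is that the paper first states a translated version of Lemma~\ref{TaylorForG} before substituting, while you substitute directly; the mathematical content is identical.
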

\begin{proof}

Apply Lemma~\ref{TaylorForG} with ${\boldsymbol \eta}={\boldsymbol \xi}^{-1}\ast {\boldsymbol \eta}$ and ${\boldsymbol \zeta}={\boldsymbol \xi}^{-1}\ast {\boldsymbol \zeta}$. We obtain 
\begin{equation}\label{TaylorTranslated}
| F({\boldsymbol \eta}) - P_{\boldsymbol \xi}({\boldsymbol \xi}^{-1}\ast {\boldsymbol \eta}) | \leq C \tilde{d}_{cc}({\boldsymbol \xi},{\boldsymbol \eta})^2 \sup_{\tilde{d}_{cc}({\boldsymbol \xi},{\boldsymbol \zeta})\leq b\tilde{d}_{cc}({\boldsymbol \xi},{\boldsymbol \eta}), i,j=1,2} | \tilde{X}_i \tilde{X}_j F({\boldsymbol \zeta}) |,
\end{equation}
Embed $\partial\Omega_k$ into $(\mathbb R^N, \ast)$ by $\Theta(p_1,p_2,p_N)=(p_1,p_2,0,\dots,0,p_N)$ for every  $(p_1,p_2,p_N) \in \partial\Omega_k$. For every  twice differentiable function $f$  on $\partial\Omega_k$, define $F$ on $(\mathbb R^N, \ast)$ by $$F(x_1,x_2,x_3,\dots,x_{N-1},x_N)=f(\Theta^{-1}(x_1,x_2,0,\dots,0,x_N)),$$
with $\Theta^{-1}$ well defined on the image of $\Theta$.
Hence, apply \eqref{TaylorTranslated} to this $F$, with  ${\boldsymbol \xi}=\Theta({\bf x})$, ${\boldsymbol \eta}=\Theta({\bf y})$, with ${\bf x} = (x_1,x_2,x_N)$ and ${\bf y} = (y_1,y_2,y_N)$. Recalling from Proposition~\ref{metric equivalence} that $\tilde{d}_{cc} \circ \Theta= d_{cc}$, we obtain that
$$
|f({\bf y})-P_{\bf x}({\bf y}-{\bf x})|\leq C{d}_{cc}({\bf x},{\bf y})^2\sup_{\tilde{d}_{cc}(\Theta({\bf x}),{\boldsymbol \zeta})\leq b{d}_{cc}({\bf x},{\bf y}), i,j=1,2}  | \tilde{X}_i \tilde{X}_j f(\Theta^{-1}({z_1},z_2,0,\dots,0,z_N)) |,
$$
where $P_{\bf x}({\bf y}-{\bf x}) = f({\bf x})+(y_1-{x}_1)X_1f({\bf x})+(y_2-{x}_2)X_2f({\bf x})$. Here we use (i) of Lemma~\ref{Qj} to show that the first two components of ${\boldsymbol \xi}^{-1}\ast {\boldsymbol \eta}$ are just the linear differences of the first two components of $\bf y$ and $\bf x$, respectively. Further, (i) of Remark~\ref{constantOnCosets} implies that $\tilde{X}_jF\circ\Theta = X_j f$.
Let ${\boldsymbol \zeta}=(z_1,z_2,z_3,\dots,z_{N-1},z_N)$. Since $f\circ \Theta^{-1}$ depends only on ${\boldsymbol \zeta}_0=(z_1,z_2,0,\dots,0,z_N)= \Theta({\bf z})$ with ${\bf z} = (z_1,z_2,z_N)$, we see that 
\begin{align*}
\sup_{\tilde{d}_{cc}(\Theta({\bf x}),{\boldsymbol \zeta})\leq b{d}_{cc}({\bf x},{\bf y}), i,j=1,2}  | \tilde{X}_i \tilde{X}_j f(\Theta^{-1}({\boldsymbol \zeta}_0) | &=\sup_{\tilde{d}_{cc}(\Theta({\bf x}),{\boldsymbol \zeta}_0)\leq b{d}_{cc}({\bf x},{\bf y}), i,j=1,2}  | \tilde{X}_i \tilde{X}_j f(\Theta^{-1}({\boldsymbol \zeta}_0) |\\
&\leq \sup_{{d}_{cc}({\bf x},{\bf z})\leq b{d}_{cc}({\bf x},{\bf y}), i,j=1,2}  | {X}_i {X}_j f({\bf z}) |.
\end{align*}
 We conclude that 
$$
|f({\bf y})-P_{\bf x}({\bf y}-{\bf x})|\leq C{d}_{cc}({\bf x},{\bf y})^2\sup_{{d}_{cc}({\bf x},{\bf z})\leq b{d}_{cc}({\bf x},{\bf y}), i,j=1,2}  | {X}_i {X}_j f({\bf z}) |
$$
as required.
\end{proof}

\section{\bf Commutator of Cauchy--Szeg\"o projection: Proof of (1) of Theorem \ref{main thm}} 

In this section we  prove $(1)$ of Theorem \ref{main thm}. Our  approach is based first on a fundamental non-degeneracy property of the Cauchy--Szeg\"o kernel showed in the first lemma below, and secondly on  
 techniques in dyadic harmonic analysis that allow us to bypass the use of Cayley and Fourier transforms or the group structure used in \cite{RochSemm,FR,FLL}.  
\begin{lemma}\label{lemA4}
For each dyadic cube $Q$, there exists another dyadic cube $\hat{Q}$ such that

{\rm{(i)}} $|Q|=|\hat{Q}|$, and $\mathrm{distance}(Q,\hat{Q})\approx |Q|$.

{\rm{(ii)}} $S_1(\mathbf x, \hat{\mathbf x})$ $($or $S_2(\mathbf x, \hat{\mathbf x})$$)$ does not change sign for all $(\mathbf x, \hat{\mathbf x}) \in Q \times \hat{Q}$ and
\begin{align}\label{Kernel}
|S_1(\mathbf x, \hat{\mathbf x})| \gtrsim \frac{1}{|Q|}\quad \Big(\text{or}~~|S_2(\mathbf x, \hat{\mathbf x})| \gtrsim \frac{1}{|Q|}\Big) ,
\end{align}
where $S_1(\mathbf x, \hat{\mathbf x})$ and $S_2(\mathbf x, \hat{\mathbf x})$ are the real and imaginary part of the Cauchy--Szeg\"o kernel $S(\mathbf x, \hat{\mathbf x})$, respectively.
\end{lemma}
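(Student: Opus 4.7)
The plan is to choose $\hat{Q}$ as a dyadic cube of the same generation as $Q$ whose centre is near a $t$-translate of the centre of $Q$ by an amount calibrated to the size of $Q$, and then to combine the \emph{exact} identity $|S|^{2}=S_{1}^{2}+S_{2}^{2}=1/d^{2}$ with the H\"older regularity of $S$. This way no case analysis on $|z_{Q}|$ is required: the orthogonality of real and imaginary parts of $S$ automatically forces one of them to be comparable to $|S|$ at the centre.

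Fix a large constant $K=K(k)$, to be chosen at the end. For a dyadic cube $Q$ of generation $h$ with centre $\mathbf{x}_{Q}=(z_{Q},t_{Q})$ and $|Q|\approx r:=\delta^{h}$, the map $\tau\mapsto d(\mathbf{x}_{Q},(z_{Q},t_{Q}+\tau))$ is continuous and increases from $0$ to $\infty$ on $(0,\infty)$ (as is immediate from \eqref{dist}--\eqref{function h}). Hence there is $\tau_{Q}>0$ with $d(\mathbf{x}_{Q},\hat{\mathbf{x}}_{Q})=Kr$, where $\hat{\mathbf{x}}_{Q}:=(z_{Q},t_{Q}+\tau_{Q})\in\partial\Omega_{k}$. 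Using Theorem~\ref{thm:existence2} I select a cube $\hat{Q}$ of generation $h$ (in some adjacent system if necessary) containing $\hat{\mathbf{x}}_{Q}$. Then \eqref{mball} gives $|\hat{Q}|\approx r\approx|Q|$, and the quasi-triangle inequality \eqref{cd} gives $\mathrm{distance}(Q,\hat{Q})\approx Kr\approx|Q|$, establishing (i).

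For (ii), \eqref{k-size1} yields the exact equality $S_{1}(\mathbf{x}_{Q},\hat{\mathbf{x}}_{Q})^{2}+S_{2}(\mathbf{x}_{Q},\hat{\mathbf{x}}_{Q})^{2}=1/(Kr)^{2}$, so one component $S_{i}$, $i\in\{1,2\}$, satisfies $|S_{i}(\mathbf{x}_{Q},\hat{\mathbf{x}}_{Q})|\geq \tfrac{1}{\sqrt{2}\,Kr}$; let $\sigma\in\{+1,-1\}$ be its sign there. For $(\mathbf{x},\hat{\mathbf{x}})\in Q\times\hat{Q}$ I bound $|S(\mathbf{x},\hat{\mathbf{x}})-S(\mathbf{x}_{Q},\hat{\mathbf{x}}_{Q})|$ by inserting $(\mathbf{x},\hat{\mathbf{x}}_{Q})$ and applying \eqref{k-size2} in the second slot directly, and in the first slot via the self-adjointness identity $S(\mathbf{x},\mathbf{y})=\overline{S(\mathbf{y},\mathbf{x})}$. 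Since $d(\mathbf{x},\mathbf{x}_{Q})$ and $d(\hat{\mathbf{x}},\hat{\mathbf{x}}_{Q})$ are $\lesssim r$ while all ``long'' distances are $\approx Kr$, this gives
\[
|S(\mathbf{x},\hat{\mathbf{x}})-S(\mathbf{x}_{Q},\hat{\mathbf{x}}_{Q})|\leq C\,\tfrac{1}{Kr}\,K^{-1/(2k+2)}.
\]
Choosing $K$ large enough that $C\,K^{-1/(2k+2)}<\tfrac{1}{2\sqrt{2}}$ makes the perturbation less than half of $|S_{i}(\mathbf{x}_{Q},\hat{\mathbf{x}}_{Q})|$, forcing $\sigma\cdot S_{i}(\mathbf{x},\hat{\mathbf{x}})\geq \tfrac{1}{2\sqrt{2}\,Kr}\gtrsim 1/|Q|$ throughout $Q\times \hat{Q}$, which is (ii).

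The main obstacle is justifying H\"older regularity in the first variable of $S$ with the same exponent $1/(2k+2)$ as \eqref{k-size2}. This should follow from the self-adjointness of the Cauchy--Szeg\"o projection ($S(\mathbf{x},\mathbf{y})=\overline{S(\mathbf{y},\mathbf{x})}$) or, failing that, directly from the explicit factored form $S=\tfrac{1}{4\pi^{2}}A^{-(k+1)/k}(1-\mathcal{P})^{-2}$ analysed in \cite{CLTW}. Once this is in hand, the lemma is proved with the explicit choice $K=(2\sqrt{2}\,C)^{2k+2}$.
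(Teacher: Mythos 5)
Your proposal is correct and follows essentially the same route as the paper's proof: choose $\hat{Q}$ of the same generation at distance a large structural constant times $|Q|$ (your $K$, the paper's $A_3$), use the exact size identity $|S|=1/d$ from \eqref{k-size1} to make one of $S_1,S_2$ of size $\gtrsim 1/|Q|$ at the centre pair, and then absorb the H\"older perturbation from \eqref{k-size2} over all of $Q\times\hat{Q}$ by taking that constant large enough. The first-slot regularity you flag as the remaining obstacle is used implicitly in the paper as well, and it does follow from the Hermitian symmetry $S(\mathbf x,\mathbf y)=\overline{S(\mathbf y,\mathbf x)}$ (equivalently from the explicit factored form of $S$), so your argument closes in the same way; the only cosmetic differences are your intermediate-value construction of the $t$-translated centre and the (unnecessary) appeal to adjacent systems, since the generation-$h$ cubes of the fixed system $\mathscr{D}$ already partition $\partial\Omega_k$.
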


\begin{proof}
Assume that $Q$ is any dyadic cube of generation $j$ with center point $\mathbf x_0\in Q$ and side length $2^{-j}$. By definition,
$$B(\mathbf x_0, a_12^{-j})\subseteq Q\subseteq B(\mathbf x_0, A_12^{-j}):=B.$$
Let $\hat B=B(\hat {\mathbf x}_0, A_12^{-j})$ satisfy 
\begin{align}\label{a34}
A_3C_d 2^{-j} <d(\mathbf x_0, \hat{\mathbf x}_0)<A_4C_d 2^{-j},
\end{align}
where 
$A_3>\max\big\{2, {1\over C_d}(8\sqrt 2 C_1 C_d^{2})^{2k+2}\big\}$ and $A_4$ is large enough such that there exists another dyadic cube in $\mathscr D_j$ with $\hat Q\subseteq \hat B$, then $|Q|\approx|\hat{Q}|.$

From \eqref{k-size1} we can see $$|S (\mathbf x_0, \hat{\mathbf x}_0)|={1\over  d (\mathbf x_0, \hat{\mathbf x}_0)},$$  therefore,
$${\rm either\ \ } |S_1 (\mathbf x_0, \hat{\mathbf x}_0)|\geq {1\over  \sqrt 2 d (\mathbf x_0, \hat{\mathbf x}_0)}\quad\text{or}\quad
|S_2 (\mathbf x_0, \hat{\mathbf x}_0)|\geq {1\over  \sqrt 2 d (\mathbf x_0, \hat{\mathbf x}_0)}.$$
We may assume that the first inequality holds and $S_1 (\mathbf x_0, \hat{\mathbf x}_0)>0$. Then for any
$(\mathbf x, \hat{\mathbf x}) \in B\times\hat B$, by \eqref{k-size2}, we have
\begin{align*}
S_1 (\mathbf x, \hat{\mathbf x})&=S_1 (\mathbf x_0, \hat{\mathbf x}_0)-\left(S_1 (\mathbf x_0, \hat{\mathbf x}_0)-S_1 (\mathbf x, \hat{\mathbf x}) \right)\\
&\geq S_1 (\mathbf x_0, \hat{\mathbf x}_0)-\left|S_1 (\mathbf x_0, \hat{\mathbf x}_0)-S_1 (\mathbf x, \hat{\mathbf x}) \right|\\
&\geq S_1 (\mathbf x_0, \hat{\mathbf x}_0)-\left|S_1 (\mathbf x_0, \hat{\mathbf x}_0)-S_1 (\mathbf x_0, \hat{\mathbf x}) \right|-\left|S_1 (\mathbf x_0, \hat{\mathbf x})-S_1 (\mathbf x, \hat{\mathbf x}) \right|\\
&\geq {1\over  \sqrt 2 d (\mathbf x_0, \hat{\mathbf x}_0)}
- {C_1\over  d (\mathbf x_0, \hat{\mathbf x}_0)}\left({d(\hat{\mathbf x}_0,\hat{\mathbf x})\over d({\mathbf x}_0,\hat{\mathbf x}_0)} \right)^{1\over 2k+2}
-{C_1\over  d (\mathbf x_0, \hat{\mathbf x})}\left({d({\mathbf x}_0,{\mathbf x})\over d({\mathbf x}_0,\hat{\mathbf x})} \right)^{1\over 2k+2}.
\end{align*}

By \eqref{cd} and \eqref{a34}, 
\begin{align*}
d (\mathbf x_0, \hat{\mathbf x})
\geq {1\over C_d}d (\mathbf x_0, \hat{\mathbf x}_0)-d(\hat{\mathbf x}_0,\hat{\mathbf x})
\geq \left({1\over C_d}-{1\over A_3 C_d}\right)d (\mathbf x_0, \hat{\mathbf x}_0).
\end{align*}
Therefore,
\begin{align*}
S_1 (\mathbf x, \hat{\mathbf x})
&\geq {1\over  \sqrt 2 d (\mathbf x_0, \hat{\mathbf x}_0)}
- {C_1\over  d (\mathbf x_0, \hat{\mathbf x}_0)}\left({d(\hat{\mathbf x}_0,\hat{\mathbf x})\over d({\mathbf x}_0,\hat{\mathbf x}_0)} \right)^{1\over 2k+2}
-{C_1\over  d (\mathbf x_0, \hat{\mathbf x})}\left({d({\mathbf x}_0,{\mathbf x})\over d({\mathbf x}_0,\hat{\mathbf x})} \right)^{1\over 2k+2}\\
&\geq {1\over  \sqrt 2 d (\mathbf x_0, \hat{\mathbf x}_0)}
- {C_1\over  d (\mathbf x_0, \hat{\mathbf x}_0)}\left({d(\hat{\mathbf x}_0,\hat{\mathbf x})\over d({\mathbf x}_0,\hat{\mathbf x}_0)} \right)^{1\over 2k+2}\\
&\quad
-{1\over [(1-{1\over A_3}){1\over C_d} ]^{1+{1\over 2k+2}}}{C_1\over  d (\mathbf x_0, \hat{\mathbf x}_0)}\left({d({\mathbf x}_0,{\mathbf x})\over d({\mathbf x}_0,\hat{\mathbf x}_0)} \right)^{1\over 2k+2}\\
&\geq {1\over  \sqrt 2 d (\mathbf x_0, \hat{\mathbf x}_0)}
-\left({1\over A_3C_d} \right)^{1\over 2k+2}{C_1\over  d (\mathbf x_0, \hat{\mathbf x}_0)}\\
&\quad
-{1\over [(1-{1\over A_3}){1\over C_d} ]^{1+{1\over 2k+2}}}\left({1\over A_3C_d} \right)^{1\over 2k+2}{C_1\over  d (\mathbf x_0, \hat{\mathbf x}_0)}\\
&=\left\{{1\over  \sqrt 2}-C_1 \left({1\over A_3C_d} \right)^{1\over 2k+2}-{C_1\over [(1-{1\over A_3}){1\over C_d} ]^{1+{1\over 2k+2}}}\left({1\over A_3C_d} \right)^{1\over 2k+2}\right\}
{1\over d (\mathbf x_0, \hat{\mathbf x}_0)}\\
&\geq \left\{{1\over  \sqrt 2}-4C_d^2C_1 \left({1\over A_3C_d} \right)^{1\over 2k+2}\right\}
{1\over d (\mathbf x_0, \hat{\mathbf x}_0)}\\
&\geq {\sqrt 2\over 4 } {1\over d (\mathbf x_0, \hat{\mathbf x}_0)}.
\end{align*}
Consequently, for any $(\mathbf x, \hat{\mathbf x}) \in Q \times \hat{Q}$, $S_1 (\mathbf x, \hat{\mathbf x})>0$ and 
$$S_1 (\mathbf x, \hat{\mathbf x})\gtrsim {1\over d (\mathbf x_0, \hat{\mathbf x}_0)} \gtrsim {1\over |Q|}. $$
This finishes the proof of Lemma \ref {lemA4}.
\end{proof}

\subsection{Besov space $B_{p}(\partial\Omega_k)$ and its dyadic structure}\label{sec:dy Bp}

We start by defining the norm of the dyadic Besov space.

\begin{definition}
Suppose $0<p<\infty$. Let $b\in L^{1}_{loc}(\partial\Omega_k,\mu)$ and $\mathscr{D}$ be an arbitrary dyadic system in $\partial\Omega_k$. Then $b$ belongs to the dyadic Besov space $B^{d}_{p}(\partial\Omega_k,\mathscr{D})$ if
\begin{align*}
\|b\|_{B^{d}_{p}(\partial\Omega_k,\mathscr{D})}&:=\bigg(\sum_{Q\in\mathscr{D},\epsilon\not\equiv1}\left(|{\langle b, h^{\epsilon}_{Q}\rangle}||Q|^{-\frac{1}{2}} \right)^p\bigg)^{\frac{1}{p}}<\infty.
\end{align*}
\end{definition}

Key to the analysis will be the fact that a suitable family of dyadic norms is equivalent to the norm in the continuous setting, which is the content of the next lemma.

\begin{lemma}\label{lemA1}
Suppose $1<p<\infty$. There are dyadic systems $\mathscr{D}^{\omega}, \omega\in \{1, 2, \ldots, \mathpzc T\}$, such that $\bigcap_{\omega=1}^{\mathpzc T} B^{d}_{p}(\partial\Omega_k,\mathscr{D}^{\omega})= B_{p}(\partial\Omega_k)$
with
$\sum_{\omega=1}^{\mathpzc T}\|b\|_{B^{d}_{p}(\partial\Omega_k,\mathscr{D}^{\omega})}\approx \|b\|_{B_{p}(\partial\Omega_k)}.$
\end{lemma}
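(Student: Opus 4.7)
The plan is to take the systems $\mathscr{D}^{\omega}$ to be the adjacent dyadic families furnished by Theorem~\ref{thm:existence2} and to establish the norm equivalence via two matching inequalities. For the easy direction $\|b\|_{B^d_p(\partial\Omega_k,\mathscr{D}^\omega)}\lesssim \|b\|_{B_p(\partial\Omega_k)}$ with $\omega$ fixed, the mean-zero and support properties of $h_Q^\epsilon$, together with $\|h_Q^\epsilon\|_\infty\lesssim |Q|^{-1/2}$, reduce the estimate on $|\langle b,h_Q^\epsilon\rangle|$ to a local $L^1$-oscillation of $b$. Two successive applications of H\"older and Jensen then yield
\begin{equation*}
|\langle b, h_Q^\epsilon\rangle|^p\, |Q|^{-p/2} \lesssim \frac{1}{|Q|^2}\int_Q\!\!\int_Q |b(\mathbf{x})-b(\mathbf{y})|^p\, d\mathbf{x}\,d\mathbf{y}.
\end{equation*}
After summing in $Q$ and the boundedly many characters $\epsilon$, and swapping summation with integration, the elementary geometric observation that only cubes with $|Q|\gtrsim d(\mathbf{x},\mathbf{y})$ contain both points allows one to bound $\sum_{Q\ni\mathbf{x},\mathbf{y}}|Q|^{-2}$ by a convergent geometric series dominated by $d(\mathbf{x},\mathbf{y})^{-2}$, which reconstructs the continuous Besov norm~\eqref{Besov norm}.

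For the reverse inequality the defining property of the adjacent systems is central: for every pair $\mathbf{x},\mathbf{y}$ with $r=d(\mathbf{x},\mathbf{y})$ there exist $\omega=\omega(\mathbf{x},\mathbf{y})$ and a cube $Q\in\mathscr{D}^\omega$ at some scale $\ell$ with $\delta^{\ell+3}<r\leq \delta^{\ell+2}$ such that $B(\mathbf{x},r)\subseteq Q\subseteq B(\mathbf{x},Cr)$, whence $|Q|\approx r$ and $\{\mathbf{x},\mathbf{y}\}\subseteq Q$. Splitting $|b(\mathbf{x})-b(\mathbf{y})|^p\lesssim |b(\mathbf{x})-m_Q b|^p+|b(\mathbf{y})-m_Q b|^p$ and invoking the $\mu$-a.e.\ pointwise Haar expansion from Theorem~\ref{thm:convergence} yields the martingale-difference identity
\begin{equation*}
b(\mathbf{x})-m_Q b \;=\; \sum_{j>\ell}\sum_\epsilon \langle b, h^\epsilon_{R_j}\rangle\, h^\epsilon_{R_j}(\mathbf{x}),
\end{equation*}
where $R_j\in \mathscr{D}^\omega$ denotes the unique scale-$j$ sub-cube of $Q$ containing $\mathbf{x}$. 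The pointwise bound $|h^\epsilon_{R_j}(\mathbf{x})|\lesssim |R_j|^{-1/2}$ produces an $\ell^1$-type estimate in the scale index $j$.

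The hard part will be bridging the $\ell^1$-summation in $j$ with the $\ell^p$-summation that defines the dyadic Besov norm. The remedy is to apply H\"older's inequality with a geometric weight $w_j=\delta^{\beta j}$, choosing $\beta\in(-1/p,0)$ so that the dual-side series $\sum_{j>\ell}w_j^{-p'}$ (with $p'$ conjugate to $p$) is a convergent geometric series of size $\delta^{-\beta p'\ell}$, while the exponent $1+\beta p$ stays positive so that the subsequent sum over $\ell$ (arising from the annular partition $\delta^{\ell+3}<d(\mathbf{x},\mathbf{y})\le\delta^{\ell+2}$) is likewise geometric and dominated by its endpoint. Integrating in $\mathbf{y}$ over $B(\mathbf{x},r)$ supplies an Ahlfors factor $r\approx\delta^\ell$, the weight $d(\mathbf{x},\mathbf{y})^{-2}\approx \delta^{-2\ell}$ is absorbed, and exchanging the summations in $j$ and $\ell$ causes the geometric series to collapse, leaving
\begin{equation*}
\int\!\!\int \frac{|b(\mathbf{x})-m_Q b|^p}{d(\mathbf{x},\mathbf{y})^2}\,d\mathbf{x}\,d\mathbf{y}\;\lesssim\; \sum_{R\in\mathscr{D}^\omega}\sum_\epsilon |R|^{-p/2}|\langle b, h^\epsilon_R\rangle|^p \;=\; \|b\|_{B^d_p(\partial\Omega_k,\mathscr{D}^\omega)}^p,
\end{equation*}
with the identical bound for the $\mathbf{y}$-term. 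Since $\mathpzc T$ is finite, summing over $\omega$ absorbs the $(\mathbf{x},\mathbf{y})$-dependence of the system choice and produces the desired estimate $\|b\|_{B_p(\partial\Omega_k)}\lesssim \sum_\omega \|b\|_{B^d_p(\partial\Omega_k,\mathscr{D}^\omega)}$, completing the equivalence.
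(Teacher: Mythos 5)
Your proposal is correct and follows the same overall skeleton as the paper's argument (two one-sided estimates, with the adjacent systems of Theorem~\ref{thm:existence2} driving the continuous-to-dyadic direction), but both halves are executed by different sub-arguments, so a comparison is worth recording. For $\|b\|_{B^{d}_{p}}\lesssim\|b\|_{B_{p}}$, the paper pairs each cube $Q$ with a \emph{disjoint} annulus $R_Q$ at distance comparable to $|Q|$, so that $d(\mathbf x,\mathbf y)\approx|Q|$ holds pointwise for $\mathbf x\in Q$, $\mathbf y\in R_Q$ and the weight $d(\mathbf x,\mathbf y)^{-2}$ can be inserted immediately; you instead stay inside $Q$, where $d(\mathbf x,\mathbf y)$ may be much smaller than $|Q|$, and recover the weight only after swapping sum and integral via the overlap bound $\sum_{Q\ni\mathbf x,\mathbf y}|Q|^{-2}\lesssim d(\mathbf x,\mathbf y)^{-2}$, which is legitimate because any cube containing both points has $|Q|\approx\delta^{j}\gtrsim d(\mathbf x,\mathbf y)$ by \eqref{mball} and property (v), and there is at most one such cube per generation, so the series is geometric and dominated by its smallest cube. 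For the reverse inequality, the paper reduces, through the decomposition into the sets $\Gamma_Q$ and the adjacent-cube containment, to the mean-oscillation quantity $I$ and then defers the telescoping/martingale step to Lemma 4.7 of \cite{FLL}; you carry out precisely that step explicitly: Haar (martingale-difference) expansion of $b-m_Qb$ inside the adjacent cube, followed by H\"older against the geometric weight $\delta^{\beta j}$ with $\beta\in(-1/p,0)$, which is exactly the window making both the dual series in $j$ and the subsequent series in $\ell$ geometric, so your argument is self-contained where the paper's is by citation. Two harmless technicalities you should note: Theorem~\ref{thm:convergence} is stated for $f\in L^{p}(\partial\Omega_k,\mu)$ while $b$ is only locally integrable, so apply the expansion locally on the fixed cube $Q$ (or invoke $E^{\omega}_{h}(b)\to b$ a.e., as the paper does); and since $\mathbf y$ lies on the sphere $d(\mathbf x,\mathbf y)=r$ rather than in the open ball, take the radius in the adjacent-system property slightly larger than $d(\mathbf x,\mathbf y)$ (within the same dyadic range, shifting the generation by at most one) to ensure $\{\mathbf x,\mathbf y\}\subseteq Q$.
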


\color{black}

\begin{proof}
On the one hand, we  prove that the dyadic Besov space norm $\|b\|_{B^{d}_{p}(\partial\Omega_k,\mathscr{D}^{\omega})}$ is dominated by the continuous Besov space norm for $\omega=1,\ldots, \mathpzc T$. For notational convenience, we drop the superscript $\omega$, that is, we aim to prove that 
\begin{align}\label{dy Besov to Besov}
\|b\|_{B^{d}_{p}(\partial\Omega_k,\mathscr{D})}
\lesssim \|b\|_{B_{p}(\partial\Omega_k)}.
\end{align}

For any fixed dyadic cube $Q\in \mathscr{D}$ of generation $j$ with center point $\mathbf x^j$, let $R_Q=B(\mathbf x^j, 3A_2^2 2^{-j})\setminus B(\mathbf x^j, 2A_2^2 2^{-j})$, where $A_2=\max\{ A_1, C_d\}$, here $ A_1$ and $C_d$ are the constants in the definition of a system of dyadic cubes and \eqref{cd}, respectively. Then if $\mathbf x\in Q$, $\mathbf y\in R_Q$, by \eqref{cd}, we claim that 
\begin{equation}\label{est-dxy}
A_22^{-j}<d(\mathbf x, \mathbf y)<2A_2^3 2^{-j+1}.
\end{equation}

In fact, we have the following estimate:
 \begin{align*}
 d(\mathbf x, \mathbf y)&\leq A_2\left(d(\mathbf x, \mathbf x^{j}) +d(\mathbf y, \mathbf x^{j}) \right)
 <A_2\left(A_2 2^{-j}+3A_2^2 2^{-j} \right)<4A_2^3 2^{-j},\\
  d(\mathbf x, \mathbf y)&\geq {1\over A_2}d(\mathbf y, \mathbf x^{j}) -d(\mathbf x, \mathbf x^{j})
  > {1\over A_2} 2A_2^2 2^{-j}-A_2 2^{-j}=A_22^{-j}.
\end{align*}

By applying H\"{o}lder's inequality 
\begin{align*}
|\langle b, h^{\epsilon}_{Q}\rangle||Q|^{-\frac{1}{2}}&\lesssim \bigg|\int_{Q}b(\mathbf x)h^{\epsilon}_{Q}(\mathbf x)d\mathbf x\bigg|\frac{|R_Q|}{|Q|^{\frac{3}{2}}}
\lesssim \int_{R_Q}\int_{Q}|b(\mathbf x)-b(\mathbf y)||h^{\epsilon}_{Q}(\mathbf x)|d\mathbf xd\mathbf y|Q|^{-\frac{3}{2}}
\\&\lesssim \bigg(\int_{R_Q}\int_{Q}\frac{|b(\mathbf x)-b(\mathbf y)|^{p}}{d(\mathbf x, \mathbf y)^2}d\mathbf xd\mathbf y\bigg)^{\frac{1}{p}}
\bigg(\int_{R_Q}\int_{Q}|h^{\epsilon}_{Q}(\mathbf x)|^{p'}d\mathbf xd\mathbf y\bigg)^{\frac{1}{p'}}{|Q|^{-\frac{2}{p'}+\frac{1}{2}}}\\
&\lesssim \bigg(\int_{R_Q}\int_{Q}\frac{|b(\mathbf x)-b(\mathbf y)|^{p}}{d(\mathbf x, \mathbf y)^2}d\mathbf xd\mathbf y\bigg)^{\frac{1}{p}}.
\end{align*}
Hence, by \eqref{est-dxy}, we can obtain
 \begin{align*}
\|b\|^{p}_{B^{d}_{p}(\partial\Omega_k,\mathscr{D})}&=\sum_{{\substack{Q\in\mathscr{D}\\\epsilon\not\equiv1}}}\left(|{\langle b, h^{\epsilon}_{Q}\rangle}||Q|^{-\frac{1}{2}} \right)^p \lesssim \sum_{{Q\in\mathscr{D}}}\int_{R_Q}\int_{Q}\frac{|b(\mathbf x)-b(\mathbf y)|^{p}}{d(\mathbf x, \mathbf y)^2}d\mathbf xd\mathbf y 
\\
&{\lesssim \sum_{j\in \mathbb{Z}}\sum_{{Q\in\mathscr{D}_j}}\int_{Q}
\int_{\{y\in\partial\Omega_k: A_22^{-j}<d(\mathbf x, \mathbf y)<2A_2^3 2^{-j+1}\}}\frac{|b(\mathbf x)-b(\mathbf y)|^{p}}{d(\mathbf x, \mathbf y)^2}d\mathbf yd\mathbf x 
}
\lesssim \|b\|^{p}_{B_{p}(\partial\Omega_k)}.
\end{align*}
This implies that \eqref{dy Besov to Besov} holds.

\medskip

On the other hand, we need to consider that
\begin{align}\label{eq-S2.4}
\|b\|_{B_{p}(\partial\Omega_k)}\lesssim \sum_{\omega=1}^{\mathpzc T}\|b\|_{B^{d}_{p}(\partial\Omega_k,
\mathscr{D}^{\omega})}.
\end{align}

We first claim that 
\begin{equation}\label{decom}
\partial\Omega_k\times \partial\Omega_k=\bigcup_{j\in\mathbb Z}\bigcup_{Q\subset \mathscr{D}_j}\Gamma_Q,
\end{equation}
where $\Gamma_Q=\{(\mathbf x, \mathbf y): \mathbf x\in Q, 2^{-j}<d(\mathbf x, \mathbf y)\leq 2^{-j+1}\}$, 
and $\Gamma_{Q_1}\cap \Gamma_{Q_2}=\emptyset$ if $Q_1\neq Q_2$.
In fact, for any $(\mathbf x, \mathbf y)\in \partial\Omega_k\times \partial\Omega_k$, there exists a unique $j\in\mathbb Z$ such that
$2^{-j}<d(\mathbf x, \mathbf y)\leq  2^{-j+1}.$
For such $j$, there also exists a unique dyadic cube $Q\in\mathscr D_j$ such that $\mathbf x\in Q$, thus, $(\mathbf x, \mathbf y) \in\Gamma_Q$.
For any $Q_1, Q_2\in\mathscr D$ with $Q_1\neq Q_2$, if $Q_1, Q_2\in \mathscr D_j$, then $Q_1\cap Q_2=\emptyset$. Otherwise, without loss of generality, we may assume that $Q_1\subset Q_2$, then there exist $m, n\in\mathbb Z$ with $n<m$ such that $Q_1\in\mathscr D_m$ and $Q_2\in\mathscr D_n$. Next, we will show that $\Gamma_{Q_1}\cap\Gamma_{Q_2}=\emptyset$. If not, then there is a point $(\mathbf x, \mathbf y) \in \Gamma_{Q_1}\cap\Gamma_{Q_2}$, therefore,
$2^{-m}<d(\mathbf x, \mathbf y)\leq 2^{-m+1}$ and $2^{-n}<d(\mathbf x, \mathbf y)\leq 2^{-n+1}$, but
this is impossible.

\color{black}

By \eqref{decom}, we can see
\begin{align*}
\|b\|^{p}_{B_{p}(\partial\Omega_k)}&=\int_{\partial\Omega_k}\int_{\partial\Omega_k}
\frac{|b(\mathbf x)-b(\mathbf y)|^{p}}{d(\mathbf x, \mathbf y)^2}d\mathbf yd\mathbf x\\
&= \sum_{j\in\mathbb{ Z}}\sum_{Q\in \mathscr{D}_{j}}\int_{Q}\int_{\{\mathbf y\in \partial\Omega_k:2^{-j}<d (\mathbf x, \mathbf y)\leq 2^{-j+1}\}}
\frac{|b(\mathbf x)-b(\mathbf y)|^{p}}{d(\mathbf x, \mathbf y)^2}d\mathbf yd\mathbf x\\
&\leq \sum_{j\in\mathbb{ Z}}\sum_{Q\in \mathscr{D}_{j}}\int_{Q}\int_{\{\mathbf y\in \partial\Omega_k:0<d(\mathbf x, \mathbf y)\leq 3\cdot2^{-j}\}}
\frac{|b(\mathbf x)-b(\mathbf y)|^{p}}{d(\mathbf x, \mathbf y)^2}d\mathbf yd\mathbf x.
\end{align*}
It is clear that $\{\mathbf y\in \partial\Omega_k:0<d(\mathbf x, \mathbf y)\leq 3\cdot2^{-j}\}\subset 6C_dQ$. Then there is $J^{\omega}\in\mathscr{D}^{\omega}$ such that $$6C_dQ\subset \bigcup_{\omega\in\{1,\cdots,\mathpzc T\}}J^{\omega}$$(see for example \cite{HK}). 
\color{black}
Then, we have
\begin{align*}
&\sum_{j\in\mathbb{ Z}}\sum_{Q\in \mathscr{D}_{j}}\int_{Q}\int_{\{\mathbf y\in \partial\Omega_k:0<d(\mathbf x, \mathbf y)\leq 3\cdot2^{-j}\}}
\frac{|b(\mathbf x)-b(\mathbf y)|^{p}}{d(\mathbf x, \mathbf y)^2}d\mathbf yd\mathbf x\\
&\le\sum_{\omega\in\{1,\cdots,\mathpzc T\}}\sum_{J^{\omega}\in\mathscr{D}^{\omega}}
\frac{1}{|J^{\omega}|^2}\int_{J^{\omega}}\int_{J^{\omega}}
|b(\mathbf x)-b(\mathbf y)|^{p}d\mathbf yd\mathbf x\\
&\le 2\sum_{\omega\in\{1,\cdots,\mathpzc T\}}\sum_{J^{\omega}\in\mathscr{D}^{\omega}}
\frac{1}{|J^{\omega}|^2}\int_{J^{\omega}}\int_{J^{\omega}}
|b(\mathbf x)-b_{J^{\omega}}|^{p}d\mathbf xd\mathbf y\\
&=2\sum_{\omega\in\{1,\cdots,\mathpzc T\}}\sum_{J^{\omega}\in\mathscr{D}^{\omega}}
\frac{1}{|J^{\omega}|}\int_{J^{\omega}}
|b(\mathbf x)-E^{\omega}_{h_{J^{\omega}}}(b)(\mathbf x)|^{p}d\mathbf x
=:2I,
\end{align*}
where $h_{J^{\omega}}$ represents that $J^{\omega}\in \mathscr{D}_{h_{J^{\omega}}}$
and $E^{\omega}_h(b)(\mathbf x)=\sum_{J^{\omega}\in\mathscr{D}^\omega_{h}} b_{J^{\omega}}\chi_{J^{\omega}}(\mathbf x)$. 
It suffices to estimate the term $I$.  By noting that $E^{\omega}_h(b)(\mathbf x)\to b(\mathbf x)$ a.e. as $h\to\infty$, following the same idea and techniques as in Lemma 4.7 in \cite{FLL}, we see that 
$I$ is dominated by $$C\sum_{j\in\mathbb{ Z}}\sum_{J^{\omega}\in \mathscr{D}^\omega_{j}}{1\over |J^{\omega}|}\int_{J^{\omega}}|E^{\omega}_{j+1}(b)(\mathbf x)-E^{\omega}_j(b)(\mathbf x)|^pd\mathbf x.$$
Finally we get inequality \eqref{eq-S2.4} and complete the proof of Lemma \ref{lemA1}.
\end{proof}

%----------------------------------------------------------
\subsection{Schatten class estimate: Sufficiency}\label{sec:com}

We note that for sufficiency, we do not see the explicit condition on the critical index $p>4$.
\begin{proposition}\label{schattenlarge2}
Suppose $4<p<\infty$ and $b\in  L^1_{{\rm loc}}(\partial \Omega_k)$. If  $b\in B_{p}(\partial \Omega_k)$, then $[b,{\bf S}]\in S^p$.
\end{proposition}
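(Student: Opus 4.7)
The plan is to pass to a dyadic model via Lemma~\ref{lemA1}, decompose the commutator into paraproduct-type pieces using the Haar basis of Theorem~\ref{thm:convergence}, and then bound each piece in $S^p$ by means of the Rochberg--Semmes estimate \eqref{eq-NWO1}.

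First, by Lemma~\ref{lemA1} it suffices to show that
\[
\|[b,\mathbf{S}]\|_{S^p}\lesssim \|b\|_{B^d_p(\partial\Omega_k,\mathscr{D})}
\]
for each of the $\mathpzc T$ adjacent dyadic systems $\mathscr{D}=\mathscr{D}^\omega$ of Theorem~\ref{thm:existence2}; the full continuous estimate follows on summing over $\omega$. Fix such a $\mathscr{D}$ and expand
\[
b=\sum_{Q\in\mathscr{D}}\sum_{\epsilon=1}^{M_Q-1}b_Q^\epsilon\, h_Q^\epsilon,\qquad b_Q^\epsilon:=\langle b,h_Q^\epsilon\rangle,
\]
via Theorem~\ref{thm:convergence}. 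Inserting this into the commutator kernel $(b(\mathbf{x})-b(\mathbf{y}))S(\mathbf{x},\mathbf{y})$ and separating the contribution of $h_Q^\epsilon(\mathbf{x})$ from that of $h_Q^\epsilon(\mathbf{y})$ yields a splitting
\[
[b,\mathbf{S}]=\Pi_b\mathbf{S}-\mathbf{S}\Pi_b+\mathcal{R},
\]
where $\Pi_b$ is the dyadic paraproduct with symbol $b$ on $\mathscr{D}$ and $\mathcal{R}$ is a remainder that we further decompose by expanding the input $f$ in the same Haar basis.

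Second, each of the finitely many resulting pieces can be rewritten in Rochberg--Semmes form
\[
Tf=\sum_{Q\in\mathscr{D}}\sum_{\epsilon=1}^{M_Q-1}\lambda_Q^\epsilon\,\langle f,e_Q^\epsilon\rangle\, f_Q^\epsilon,\qquad \lambda_Q^\epsilon=|b_Q^\epsilon|\,|Q|^{-1/2},
\]
where the functions $e_Q^\epsilon$ and $f_Q^\epsilon$ are supported in $Q$, or in a bounded dyadic dilate of $Q$, and satisfy $\|e_Q^\epsilon\|_{L^{p'}}\lesssim |Q|^{1/p'-1/2}$, hence form NWO sequences by the lemma following the NWO definition. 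Verifying this normalization relies on the pointwise size \eqref{k-size1} and the H\"older regularity \eqref{k-size2} of the Cauchy--Szeg\"o kernel together with the estimate $\mu(B(\mathbf{x},r))\approx r$ in \eqref{mball}. Applying \eqref{eq-NWO1} to each piece then gives
\[
\|T\|_{S^p}\lesssim \|\lambda\|_{\ell^p}=\Bigl(\sum_{Q,\epsilon}|b_Q^\epsilon|^p|Q|^{-p/2}\Bigr)^{1/p}=\|b\|_{B^d_p(\partial\Omega_k,\mathscr{D})}.
\]
Summing over the finitely many paraproduct pieces and over $\omega\in\{1,\dots,\mathpzc T\}$, and invoking Lemma~\ref{lemA1}, we conclude $\|[b,\mathbf{S}]\|_{S^p}\lesssim \|b\|_{B_p(\partial\Omega_k)}$.

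The main technical obstacle will be the construction of the NWO sequences associated to the off-diagonal parts of $\mathcal{R}$: one must use the size bound \eqref{k-size1} on $S$ for near-diagonal Haar pairs, and the regularity bound \eqref{k-size2} to absorb the resulting geometric decay at coarser scales, while the doubling of $\mu$ provides the correct $L^{p'}$-normalization of the resulting atoms. We note that the sufficiency argument does not invoke the threshold $p>4$; it works for every $p$ for which the Besov norm \eqref{Besov norm} makes sense, with the critical value $p=4$ entering only in the necessity part and in (2) of Theorem~\ref{main thm}.
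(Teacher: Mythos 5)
Your route is genuinely different from the paper's, and as written it has a gap at exactly the point you flag as the ``main technical obstacle''. The pieces $\Pi_b\mathbf{S}$ and $\mathbf{S}\Pi_b$ are indeed harmless: once you know $\|\Pi_b\|_{S^p}\lesssim\|b\|_{B^d_p(\partial\Omega_k,\mathscr{D})}$ (which does follow from \eqref{eq-NWO1} with $e_Q=|Q|^{-1/2}\chi_Q$, $f_Q=h^\epsilon_Q$), the ideal property of $S^p$ takes care of the composition with the bounded operator $\mathbf{S}$. The problem is the remainder $\mathcal{R}$ (equivalently, the term where $b$ sits in the ``function slot'' and the diagonal term): after expanding $f$ in the Haar basis, its building blocks involve $\mathbf{S}h^\eta_R$ (or $\mathbf{S}^*h^\epsilon_Q$), and these are \emph{not} supported in $R$ or in any bounded dyadic dilate of $R$, because the Cauchy--Szeg\"o kernel is global with only power decay. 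The NWO definition and the lemma you invoke require support in the cube (and, incidentally, an $L^p$ bound for some $p>2$, whereas you wrote an $L^{p'}$ bound with $p'<2$), so \eqref{eq-NWO1} cannot be applied to these pieces as stated. To repair this you would need a genuine off-diagonal analysis: organize the matrix coefficients $\langle\mathbf{S}h^\eta_R,h^\epsilon_Q\rangle$ by relative scale and distance, extract decay from the H\"older regularity \eqref{k-size2} (which only gives the weak exponent $1/(2k+2)$), handle adjacent same-scale cubes where the kernel is singular, and then resum so that the coefficient sequence of each shifted piece is still dominated by the dyadic Besov coefficients. None of this is carried out, and it is the bulk of the work in the paraproduct approach (this is essentially the machinery of the two-weight/Schatten paraproduct papers of Lacey--Li--Wick). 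A small further point: once you have the bound against a single dyadic system $\mathscr{D}^\omega$, you are done by \eqref{dy Besov to Besov}; there is nothing to sum over $\omega$.

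For comparison, the paper's proof of Proposition \ref{schattenlarge2} avoids all dyadic analysis. It applies the Janson--Wolff criterion (Lemma \ref{weak}) directly to the commutator kernel $(b(\mathbf{x})-b(\mathbf{y}))S(\mathbf{x},\mathbf{y})$: with $1/q=1-2/p$, the size estimate \eqref{k-size1} together with $\mu(B(\mathbf{x},r))\approx r$ from \eqref{mball} gives $\|S^{1/q}\|_{L^\infty,L^{q,\infty}}\lesssim1$, H\"older's inequality in Lorentz spaces then yields \eqref{verify1}--\eqref{verify2}, hence $\|[b,\mathbf{S}]\|_{S^{p,\infty}}\lesssim\|b\|_{B_p(\partial\Omega_k)}$, and the strong $S^p$ bound follows by interpolating simultaneously in the weak Schatten scale and the Besov scale. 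If you want to keep your dyadic strategy, you must either supply the off-diagonal/almost-orthogonality estimates described above or switch to the kernel-based argument; your closing remark that the sufficiency does not use $p>4$ is consistent with the paper, but it does require $p>2$ for the Janson--Wolff lemma and for the exponent $q$ to be positive, and for $p\leq4$ the statement is vacuous since $B_p(\partial\Omega_k)$ then contains only constants.
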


Before the proof, we need the following lemma  given by Janson and Wolff (\cite[Lemma 1 and Lemma 2]{JW}) in the general measure space setting.

\begin{lemma}[\cite{JW}]\label{weak}
Suppose $(X,\mu)$ is a general measure space, if $p>2$ and $K(x,y)\in  L^{2}(X\times X)$, then the integral operator $T$ associated to the kernel $K(x,y)$ satisfies: 
\begin{align}\label{integral}
\|T\|_{S^{p,\infty}}\leq \|K\|_{L^{p},L^{p^{\prime},\infty}}^{1/2}\|K^{*}\|_{L^{p},L^{p^{\prime},\infty}}^{1/2},
\end{align}
where  $1/p+1/p^{\prime}=1$, $\|\cdot\|_{L^p, L^{p^{\prime},\infty}}$ denotes the mixed-norm:
$$
\|K\|_{L^p,L^{p^{\prime},\infty}}:=\big\|\|K(x,y)\|_{L^p(d\mu(x))}\big\|_{L^{p^{\prime},\infty}(d\mu(y))}.
$$
\end{lemma}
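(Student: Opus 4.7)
My plan is to reconstruct the Janson--Wolff argument by real interpolation between two endpoint cases. At the $p=2$ endpoint, the Hilbert--Schmidt identity
$$\|T\|_{S^2}^2 = \int\!\!\int |K(x,y)|^2\, d\mu(x)\, d\mu(y) = \|K\|_{L^2_x L^2_y}\cdot \|K^*\|_{L^2_x L^2_y}$$
gives the claim directly, with $L^{2,\infty}$ identified with $L^2$ up to equivalent quasinorms. At the $p=\infty$ endpoint, Schur's test yields
$$\|T\|_{\mathrm{op}} \leq \|K\|_{L^\infty_x L^1_y}^{1/2}\, \|K\|_{L^\infty_y L^1_x}^{1/2},$$
with $L^{1,\infty}$ again collapsing to $L^1$ in the relevant Lorentz identification, so the claim matches at the other endpoint of the Lorentz scale.

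For intermediate $2 < p < \infty$, I would interpolate between these two endpoints. Since the target bound is symmetric in $K$ and $K^*$, I would first prove a one-sided bound $\|T\|_{S^{p,\infty}} \leq C \|K\|_{L^p_x L^{p',\infty}_y}$ by applying a Marcinkiewicz-type real-interpolation theorem to the mapping $K \mapsto T_K$ between mixed-norm Lorentz kernel spaces and the weak Schatten Lorentz scale $S^{p,\infty}$, and then use a polar-decomposition / Cauchy--Schwarz factorization in Schatten classes to symmetrize and obtain the geometric-mean form involving both $K$ and $K^*$.

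The main obstacle is making the real-interpolation step rigorous when both the source (mixed-norm Lorentz) and the target (weak Schatten) lie in non-standard scales, so that the distribution functions of singular values of $T_K$ must be tracked carefully against the distribution functions of the auxiliary functions $y \mapsto \|K(\cdot,y)\|_{L^p_x}$ and $x \mapsto \|K(x,\cdot)\|_{L^p_y}$. An alternative direct approach, closer in spirit to the original Janson--Wolff treatment, is to decompose $K$ according to dyadic level sets of those two auxiliary functions; on each dyadic layer one applies a Hilbert--Schmidt bound (using the weak-$L^{p'}$ control on level sets to show each layer's HS norm is $\lesssim (M_1 M_2)^{1/2}$), and then combines the layers via the Ky Fan inequality $s_{n+m-1}(A+B) \leq s_n(A) + s_m(B)$ to conclude $s_n(T) \lesssim n^{-1/p}(M_1 M_2)^{1/2}$, which is the desired weak Schatten estimate.
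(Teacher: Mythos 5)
You should first note that the paper itself contains no proof of this lemma: it is imported verbatim from Janson--Wolff \cite{JW} (their Lemmas 1 and 2), so your argument has to stand on its own, and as written it has a genuine gap in both routes. The interpolation route cannot work in the form you describe. The intermediate one-sided bound $\|T\|_{S^{p,\infty}}\le C\|K\|_{L^p,L^{p',\infty}}$ is false: take $K(x,y)=N^{-1/2}\chi_{[0,N]}(x)\chi_{[0,1]}(y)$ on $\mathbb{R}$ with Lebesgue measure; then $T$ is rank one with $\|T\|_{S^{p,\infty}}=s_1(T)=\|N^{-1/2}\chi_{[0,N]}\|_{2}\|\chi_{[0,1]}\|_{2}=1$, while $\|K\|_{L^p,L^{p',\infty}}=N^{1/p-1/2}\to 0$ because $p>2$ (the adjoint norm $\|K^*\|_{L^p,L^{p',\infty}}=N^{1/2-1/p}$ blows up at the compensating rate, which is exactly why the true statement is a geometric mean). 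In fact, if the one-sided bound were true, no polar-decomposition step would be needed at all, since $\|T\|_{S^{p,\infty}}=\|T^*\|_{S^{p,\infty}}$ would give $\|T\|_{S^{p,\infty}}\le C\min(M_1,M_2)\le C(M_1M_2)^{1/2}$ directly; the content of the lemma is precisely that only the geometric mean works, so the two hypotheses must be used simultaneously and cannot be decoupled. Your endpoints are also misidentified: $L^{2,\infty}$ and $L^{1,\infty}$ are strictly larger than $L^2$ and $L^1$ (they do not ``collapse''), and the Schur test at $p=\infty$ uses $\sup_y\int|K(x,y)|\,d\mu(x)$ and $\sup_x\int|K(x,y)|\,d\mu(y)$, i.e.\ an inner $L^1$ and outer $L^\infty$ norm, which is not the lemma's mixed norm (inner $L^p_x$, outer weak $L^{p'}_y$) specialized to $p=\infty$; moreover, real interpolation of the single linear map $K\mapsto T_K$ yields a bound by one interpolated norm of $K$, not by a product of norms of $K$ and of $K^*$.

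Your fallback dyadic route is indeed the spirit of the original Janson--Wolff proof, and the layer estimate you assert can be made correct: with $F(y)=\|K(\cdot,y)\|_{L^p_x}$, $G(x)=\|K(x,\cdot)\|_{L^p_y}$ and $K_{ij}=K\chi_{\{G\approx 2^i\}\times\{F\approx 2^j\}}$, H\"older in the inner variable together with the weak-$L^{p'}$ bounds on the measures of the level sets gives $\|K_{ij}\|_{S^2}\lesssim(M_1M_2)^{1/2}$, with additional geometric smallness available when $|i-j|$ is large. But the concluding step is a non sequitur as stated: uniform Hilbert--Schmidt bounds on infinitely many layers combined with the Ky Fan inequality do not imply $s_n(T)\lesssim n^{-1/p}(M_1M_2)^{1/2}$ --- an operator that is block diagonal with infinitely many mutually orthogonal rank-one blocks of norm $(M_1M_2)^{1/2}$ satisfies the same layer bounds and has constant singular values. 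The heart of the Janson--Wolff argument is exactly the missing bookkeeping: for each $n$ one must choose $n$-dependent thresholds, exploit that blocks along a fixed diagonal $i-j=m$ act between pairwise orthogonal subspaces (disjoint supports in $x$ and in $y$), use the off-diagonal decay in $|m|$, and allocate ranks $n_{ij}$ with $\sum_{ij}n_{ij}\lesssim n$ in $s_{m_1+m_2-1}(A+B)\le s_{m_1}(A)+s_{m_2}(B)$ together with $s_m(A)\le m^{-1/2}\|A\|_{S^2}$, summing the resulting geometric series; it is only from this balancing that the exponent $n^{-1/p}$, the restriction $p>2$, and the weak (rather than strong) Schatten conclusion emerge. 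Until that step is supplied, the proof is incomplete.
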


Based on the above estimate for the weak Schatten class $S^{p,\infty}$, we turn to the proof of Proposition \ref {schattenlarge2}.

\begin{proof}[Proof of Proposition \ref {schattenlarge2}]
For every fixed $p>4$, set $1/q=1-2/p$. By \eqref{k-size1} in Theorem \ref{thm CLTW} and by \eqref{mball}, we have that 
\begin{align*}
\left\|S(\mathbf{x}, \mathbf{ y})^{1\over q}\right\|_{L^{\infty},L^{q,\infty}}&=\left\|\frac{1}{d(\mathbf{x}, \mathbf{ y})^ {1\over q}}\right\|_{L^{\infty},L^{q,\infty}}=\sup\limits_{\mathbf{x}\in\partial \Omega_k}\sup\limits_{\alpha>0}\alpha\left|\left\{\mathbf{y}\in \partial \Omega_k:\frac{1}{d(\mathbf{x}, \mathbf{ y})^{1\over q}}>\alpha\right\}\right|^ {1\over q}\\
&=\sup\limits_{\mathbf{x}\in\partial \Omega_k}\sup\limits_{\alpha>0}\alpha \left|B(\mathbf{x},\alpha^{-q})\right|^ {1\over q}
\approx \sup\limits_{\mathbf{x}\in\partial \Omega_k}\sup\limits_{\alpha>0}\alpha\ \big( \alpha^{-q}\big)^{1\over q}\\
&\lesssim1.
\end{align*}
Then by H\"older's inequality for Lorentz spaces, 
\begin{align}\label{verify1}
\left\|(b(\mathbf{x})-b(\mathbf{y}))S(\mathbf{x}, \mathbf{ y})\right\|_{L^p, L^{p^{\prime},\infty}}
&\lesssim \left\|(b(\mathbf{x})-b(\mathbf{y}))S(\mathbf{x}, \mathbf{ y})^{1-{1\over q}}
\right\|_{L^{p},L^{p,\infty}}\left\|S(\mathbf{x}, \mathbf{ y})^{1\over q}\right\|_{L^{\infty},L^{q,\infty}}\nonumber\\
&\lesssim \left\|\frac{b(\mathbf{x})-b(\mathbf{y})}{d(\mathbf{x}, \mathbf{ y})^{1-{1\over q}}}\right\|_{L^{p},L^{p,\infty}}
\lesssim \left\|\frac{b(\mathbf{x})-b(\mathbf{y})}{d(\mathbf{x}, \mathbf{ y})^{2\over p}}\right\|_{L^{p},L^{p}}\nonumber\\
&\approx\|b\|_{B_{p}(\partial \Omega_k)}.
\end{align}
Similarly,
\begin{align}\label{verify2}\left\|(b(\mathbf{x})-b(\mathbf{y}))\overline{S(\mathbf{x}, \mathbf{ y})}\right\|_{L^p, L^{p^{\prime},\infty}}\lesssim \|b\|_{B_{p}(\partial \Omega_k)}.
\end{align}
\color{black}

    Now by  \eqref{verify1}, \eqref{verify2} and \eqref{integral}, we can obtain that
$$\|[b,{\bf S}]\|_{S^{p,\infty}}\leq C\|b\|_{B_{p}(\partial \Omega_k)}.$$ 
Since this inequality holds for all $4<p<\infty$, by the interpolation we have $(S^{p_1,\infty},S^{p_2,\infty})_{\theta_p}=S^{p}$, where $\frac{1-\theta_p}{p_1}+\frac{\theta_p}{p_2}=\frac{1}{p}$. Moreover, since the Besov space $B_p(\partial \Omega_k)$ defined in \eqref{Besov norm} is equivalent to the standard one defined on the space of homogeneous type $(\partial\Omega_k,d,\mu)$ via Litllewood--Paley theory, and hence it has the interpolation  $(B_{p_1},B_{p_2})_{\theta_{p}}=B_{p}$ (c.f. \cite[Theorem 4.1]{MY} and \cite[Theorem 3.1]{Yang}), where $\frac{1-\theta_p}{p_1}+\frac{\theta_p}{p_2}=\frac{1}{p}$. Thus, we obtain that
\begin{align*}
\|[b,{\bf S}]\|_{S^{p}}\leq C\|b\|_{B_{p}(\partial \Omega_k)}.
\end{align*}
This finishes the proof of sufficient condition for the case $4<p<\infty$.
\end{proof}

%----------------------------------------------------------
\subsection{Schatten class estimate: Necessity}\label{sec:com}

\begin{proposition}\label{proS1}
For $p>4$, $b\in {\rm VMO}(\partial\Omega_k)$ with $ \|[b,{\bf S}]\|_{S^p} < \infty$,
we have
$b\in B_{p}(\partial\Omega_k)$ with 
\begin{align*}
\|b\|_{B_{p}(\partial\Omega_k)}\lesssim \|[b,{\bf S}]\|_{S^p}.
\end{align*}
\end{proposition}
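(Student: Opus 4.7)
The plan is to first reduce, via Lemma~\ref{lemA1}, to the dyadic Besov norm: it suffices to show $\|b\|_{B^d_p(\partial\Omega_k,\mathscr{D})}\lesssim\|[b,\mathbf{S}]\|_{S^p}$ for each of the finitely many adjacent dyadic systems $\mathscr{D}$ from Theorem~\ref{thm:existence2}. Fix such a system. For every $Q\in\mathscr{D}$, Lemma~\ref{lemA4} produces a companion cube $\hat Q$ with $|\hat Q|\approx|Q|$, $\mathrm{dist}(Q,\hat Q)\approx|Q|$, and a choice of $S_j\in\{\operatorname{Re}S,\operatorname{Im}S\}$ with constant sign $\epsilon_Q\in\{\pm1\}$ and $\epsilon_Q S_j\gtrsim|Q|^{-1}$ throughout $Q\times\hat Q$. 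Crucially, the distance parameter $A_3$ in that lemma can be tuned freely, which I will exploit in the absorption step.

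Next I would introduce two candidate sequences: $e_Q:=h_Q^\epsilon$ (Haar function, mean-zero on $Q$, normalised in $L^2$) and $f_Q:=\epsilon_Q|\hat Q|^{-1/2}\chi_{\hat Q}$. Both satisfy the criterion $\|\cdot\|_{L^p}\lesssim|Q|^{1/p-1/2}$ of Rochberg--Semmes, so both are NWO. Taking real parts and using $\epsilon_Q S_j=|S_j|$ on $Q\times\hat Q$,
$$
\operatorname{Re}\langle[b,\mathbf{S}]e_Q,f_Q\rangle=|\hat Q|^{-1/2}\int_{\hat Q}\!\int_Q(b(\mathbf{y})-b(\mathbf{x}))\,|S_j(\mathbf{y},\mathbf{x})|\,h_Q^\epsilon(\mathbf{x})\,d\mathbf{x}\,d\mathbf{y}.
$$
I would replace $b$ by $b-b_{\hat Q}$ (the commutator is insensitive to additive constants) and split the integral into its $(b(\mathbf{y})-b_{\hat Q})$-part and its $(b(\mathbf{x})-b_{\hat Q})$-part. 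For the first, the mean-zero property of $h_Q^\epsilon$ allows me to subtract $|S_j(\mathbf{y},\mathbf{x}_Q)|$ from $|S_j(\mathbf{y},\mathbf{x})|$ and the smoothness estimate \eqref{k-size2} then bounds the difference by $|Q|^{-1}\eta^{1/(2k+2)}$, where $\eta\approx A_3^{-1}$ is small. For the second, writing $W(\mathbf{x}):=\int_{\hat Q}|S_j(\mathbf{y},\mathbf{x})|\,d\mathbf{y}$ and expanding $W(\mathbf{x})=W(\mathbf{x}_Q)+(W(\mathbf{x})-W(\mathbf{x}_Q))$, the constant piece contributes the main term $-|\hat Q|^{-1/2}W(\mathbf{x}_Q)\langle b,h_Q^\epsilon\rangle$, with $W(\mathbf{x}_Q)\approx1$ by Lemma~\ref{lemA4}, while the remainder is again of order $\eta^{1/(2k+2)}$. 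Rearranging leaves
$$
|Q|^{-1/2}|\langle b,h_Q^\epsilon\rangle|\lesssim|\langle[b,\mathbf{S}]e_Q,f_Q\rangle|+\eta^{1/(2k+2)}\mathcal{E}_Q,
$$
where $\mathcal{E}_Q$ is an $L^1$-mean of $|b-b_{\hat Q}|$ over $Q\cup\hat Q$.

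To close the argument I would take $p$-th powers, sum over $Q$, and apply Lemma~\ref{eq-NWO} to get $\bigl(\sum_Q|\langle[b,\mathbf{S}]e_Q,f_Q\rangle|^p\bigr)^{1/p}\lesssim\|[b,\mathbf{S}]\|_{S^p}$. Choosing $A_3$ large so that $\eta^{1/(2k+2)}$ is arbitrarily small, the error term should be absorbed into the left-hand side; summing over the adjacent systems $\omega=1,\dots,\mathpzc T$ via Lemma~\ref{lemA1} yields the continuous bound.

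The main obstacle is the absorption step: the error $\sum_Q\mathcal{E}_Q^p$ must be re-expressed as a bounded multiple of $\|b\|_{B^d_p(\mathscr{D})}^p$, despite $\mathcal{E}_Q$ involving averages over $\hat Q\neq Q$. This will require comparing $b_{\hat Q}$ with $b_Q$ through a bounded chain of intermediate cubes and exploiting the doubling property to keep the comparison constant uniformly controlled, possibly routing through a second adjacent system to contain $\hat Q$ inside a cube of comparable generation. A secondary, more routine, point is that the absorption is legitimate only when $\|b\|_{B^d_p(\mathscr{D})}$ is a priori finite; this is ensured by a truncation argument using $b\in\mathrm{VMO}(\partial\Omega_k)$ together with the compactness of $[b,\mathbf{S}]$ coming from $\|[b,\mathbf{S}]\|_{S^p}<\infty$.
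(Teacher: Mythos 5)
Your skeleton matches the paper's (reduction to the dyadic Besov norm via Lemma~\ref{lemA1}, the companion cube $\hat Q$ from Lemma~\ref{lemA4}, and the Rochberg--Semmes bound of Lemma~\ref{eq-NWO} applied to NWO test functions), but the heart of the argument is different, and the difference is where your proposal has a genuine gap. The paper never produces an error term at all: it takes a \emph{median value} $m_b(\hat Q)$ of $b$ on $\hat Q$ and splits $Q$ and $\hat Q$ into the sets $E^Q_s$, $F^{\hat Q}_s$, $s=1,2$, on whose products both $b(\mathbf x)-b(\mathbf y)$ and $S_1(\mathbf x,\mathbf y)$ have constant sign and $|b(\mathbf x)-m_b(\hat Q)|\le|b(\mathbf x)-b(\mathbf y)|$. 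The sign information converts the integral of $|b(\mathbf x)-b(\mathbf y)||S_1(\mathbf x,\mathbf y)|$ directly into $\big|\langle[b,\mathbf{S}]G^s_{\hat Q},H^s_Q\rangle\big|$ for normalized indicators of $F^{\hat Q}_s$ and $Q\cap E^Q_s$, so the Haar coefficient is dominated by commutator pairings with no remainder, no absorption, and no a priori finiteness of $\|b\|_{B_p}$ needed. Your route instead tests against $h^\epsilon_Q$ and $\chi_{\hat Q}$, extracts a main term $W(\mathbf x_Q)\langle b,h^\epsilon_Q\rangle$, and must absorb errors of relative size $A_3^{-1/(2k+2)}$ coming from the H\"older estimate \eqref{k-size2}.

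The absorption step, which you yourself flag as the main obstacle, does not close for the claimed range $p>4$. The error $\mathcal{E}_Q$ involves the oscillation of $b$ against $b_{\hat Q}$, where $\hat Q$ sits at distance $\approx A_3\,\ell(Q)$; because the Besov weight is $d(\mathbf x,\mathbf y)^{-2}$ while $\mu(B(\mathbf x,r))\approx r$ by \eqref{mball}, dominating $\sum_Q\mathcal{E}_Q^p$ by $\|b\|_{B_p}^p$ necessarily costs a positive power of $A_3$: directly one pays $d(\mathbf x,\mathbf y)^2/(|Q||\hat Q|)\approx A_3^2$, and even your proposed chaining of $b_{\hat Q}$ to $b_Q$ through $\approx\log A_3$ common ancestors pays at least the multiplicity $\approx A_3$ of descendants of those ancestors (times $(\log A_3)^{p-1}$ from H\"older on the chain). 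Since the gain after taking $p$-th powers is only $A_3^{-p/(2k+2)}$, absorption requires $p>2k+2$ (or $p>4k+4$ in the direct accounting), which for $k\ge2$ excludes the whole range $4<p\le 2k+2$; no choice of $A_3$ rescues this, because the loss and the gain are both powers of the same parameter. In addition, the absorption presupposes that $\|b\|_{B_p^d}$ (equivalently, by Lemma~\ref{lemA1}, the continuous Besov norm you are trying to bound) is a priori finite; your truncation-plus-compactness remark does not establish this and risks circularity, since the error sum is itself controlled only through that same norm. Replacing the main-term/error decomposition by the paper's median-value sign argument removes both problems simultaneously.
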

\begin{proof}

Below, we consider cubes $Q \in \mathscr{D}$, a fixed dyadic system.

From Lemma \ref{lemA4},
for every dyadic cube $Q\in \mathscr{D}$, there exists another dyadic cube $\hat{Q}$ such that the properties (i) and (ii) in Lemma \ref{lemA4} hold.  Without lost of generality, we may assume that 
$S_1(\mathbf x, \hat{\mathbf x})$ does not change sign for all $(\mathbf x, \hat{\mathbf x}) \in Q \times \hat{Q}$ and
\begin{align*}
|S_1(\mathbf x, \hat{\mathbf x})| \gtrsim \frac{1}{|Q|}.
\end{align*}

Let $m_{b}(\hat{Q})$ be a median value of $b$ over $\hat{Q}$. This means $m_{b}(\hat{Q})$ is a real number such that
\begin{align}\label{median}
E_{1}^{Q}:=\left\{\mathbf y \in Q: b(\mathbf y)<m_{b}(\hat{Q})\right\} \quad\text { and }\quad E_{2}^{Q}:=\left\{\mathbf y \in Q: b(\mathbf y)>m_{b}(\hat{Q})\right\} .
\end{align}

We note that the upper bound $\left|E_{m}^{Q}\right| \leq \frac{1}{2}|Q|$ for $m=1,2$. A median value always exists, but may not be unique.

By noting that $\int_{Q}h^{\epsilon}_{Q}(\mathbf x)d\mathbf x=0$ and using \eqref{median}, a simple calculation gives
\begin{align}\label{Term12}
\left|\int_{Q} b(\mathbf x) h^{\epsilon}_{Q}(\mathbf x) d\mathbf  x\right| &=\left|\int_{Q}\left(b(\mathbf x)-m_{b}(\hat{Q})\right) h^{\epsilon}_{Q}(\mathbf x) d\mathbf  x\right|
 \leq \frac{1}{|Q|^{\frac{1}{2}}} \int_{Q}\left|b(\mathbf x)-m_{b}(\hat{Q})\right| d\mathbf  x \nonumber\\
& \leq \frac{1}{|Q|^{\frac{1}{2}}} \int_{Q \cap E_{1}^{Q}}\left|b(\mathbf x)-m_{b}(\hat{Q})\right| d x+\frac{1}{|Q|^{\frac{1}{2}}}  \int_{Q \cap E_{2}^{Q}}\left|b(\mathbf x)-m_{b}(\hat{Q})\right| d\mathbf  x \nonumber\\
&=: \operatorname{Term}_{1}^{Q}+\operatorname{Term}_{2}^{Q} .
\end{align}
Now we denote
$$
F_{1}^{\hat{Q}}:=\left\{\mathbf y \in \hat{Q}: b(\mathbf y) \geq m_{b}(\hat{Q})\right\}\quad \text { and }\quad F_{2}^{\hat{Q}}:=\left\{\mathbf y \in \hat{Q}: b(\mathbf y) \leq m_{b}(\hat{Q})\right\} .
$$
Then by the definition of $b_{\hat{Q}}$, we have $\left|F_{1}^{\hat{Q}}\right|=\left|F_{2}^{\hat{Q}}\right| \approx|\hat{Q}|$ and $F_{1}^{\hat{Q}} \cup F_{2}^{\hat{Q}}=\hat{Q}$. Note that for $s=1,2$, if $\mathbf x \in E_{s}^{Q}$ and $\mathbf y \in F_{s}^{\hat{Q}}$, then
\begin{align*}
\left|b(\mathbf x)-m_{b}(\hat{Q})\right| &\leq\left|b(\mathbf x)-m_{b}(\hat{Q})\right|+\left|m_{b}(\hat{Q})-b(\mathbf y)\right|
=\left|b(\mathbf x)-m_{b}(\hat{Q})+m_{b}(\hat{Q})-b(\mathbf y)\right|\\
&=|b(\mathbf x)-b(\mathbf y)| .
\end{align*}
Therefore, for $s=1,2$, by using \eqref{Kernel} and by the fact that $|F_{s}^{\hat{Q}}| \approx|Q|$, we have
\begin{align*}
\operatorname{Term}_{s}^{Q} & \lesssim \frac{1}{|Q|^{\frac{1}{2}}} \int_{Q \cap E_{s}^{Q}}|b(\mathbf x)-m_{b}(\hat{Q})| d \mathbf x \frac{|F_{s}^{\hat{Q}}|}{|Q|} 
=\frac{1}{|Q|^{\frac{1}{2}}}  \int_{Q \cap E_{s}^{Q}} \int_{F_{s}^{\hat{Q}}}|b(\mathbf x)-m_{b}(\hat{Q})| \frac{1}{|Q|} d\mathbf  y d\mathbf  x \\
& \lesssim \frac{1}{|Q|^{\frac{1}{2}}} \int_{Q \cap E_{s}^{Q}} \int_{F_{s}^{\hat{Q}}}|b(\mathbf x)-m_{b}(\hat{Q})||S_1(\mathbf x, {\mathbf y})| d\mathbf  y d\mathbf  x \\
& \lesssim \frac{1}{|Q|^{\frac{1}{2}}} \int_{Q \cap E_{s}^{Q}} \int_{F_{s}^{\hat{Q}}}|b(\mathbf x)-b(\mathbf y)||S_1(\mathbf x, {\mathbf y})| d\mathbf y d\mathbf x.
\end{align*}

To continue, by noting that $S_1(\mathbf x, {\mathbf y})$ and $b(\mathbf x)-b(\mathbf y)$ does not change sign for $(\mathbf x, \mathbf y) \in\left(Q \cap E_{s}^{Q}\right) \times F_{s}^{\hat{Q}}$, $s=1,2$, we have that
\begin{align*}
\operatorname{Term}_{s}^{Q} 
&\lesssim\frac{1}{|Q|^{\frac{1}{2}}} \left|\int_{\partial\Omega_k} \int_{\partial \Omega_k}(b(\mathbf x)-b(\mathbf y)) S_1(\mathbf x,\mathbf y) \chi_{F_{s}^{\hat{Q}}}(\mathbf y) d\mathbf  y \chi_{Q \cap E_{s}^{Q}}(\mathbf x) d\mathbf  x\right|\\ 
&\lesssim\frac{1}{|Q|^{\frac{1}{2}}} \left|\int_{\partial\Omega_k} \int_{\partial \Omega_k}(b(\mathbf x)-b(\mathbf y)) \Big(S_1(\mathbf x,\mathbf y) + i S_2(\mathbf x,\mathbf y)\Big)\chi_{F_{s}^{\hat{Q}}}(\mathbf y) d\mathbf  y \chi_{Q \cap E_{s}^{Q}}(\mathbf x) d\mathbf  x\right|\\
&=\frac{1}{|Q|^{\frac{1}{2}}} \left|\int_{\partial\Omega_k} \int_{\partial \Omega_k}(b(\mathbf x)-b(\mathbf y)) S(\mathbf x,\mathbf y) \chi_{F_{s}^{\hat{Q}}}(\mathbf y) d\mathbf  y \chi_{Q \cap E_{s}^{Q}}(\mathbf x) d\mathbf  x\right|.
\end{align*}

Thus, we further obtain that 
\begin{align*}
{\sum_{Q \in \mathscr{D},\epsilon \not \equiv 1}} \left(\frac{|\langle b, h^{\epsilon}_{Q} \rangle| } {|Q|^{\frac{1}{2}}} \right)^p 
&\lesssim{ \sum_{Q \in \mathscr{D}, \epsilon\not\equiv1} \sum_{s=1}^{2}}\Bigg|\int_{\partial\Omega_k} \int_{\partial\Omega_k}(b(\mathbf x)-b(\mathbf y)) S(\mathbf x,\mathbf y) \ \frac{\chi_{F_{s}^{\hat{Q}}}(\mathbf y)}{ |Q|^{1\over2}}\ d\mathbf y\ \frac{ \chi_{Q \cap E_{s}^{Q}}(\mathbf x)}{ |Q|^{1\over2}}\ d \mathbf x\Bigg|^{p} \\
&=:\sum_{Q \in \mathscr{D}, \epsilon\not\equiv1} \sum_{s=1}^{2} \left|\left\langle [b, {\bf S}]  \big(G^{s}_{\hat{Q}} \big), H^{s}_{Q}\right\rangle\right|^{p},
\end{align*}
where
$$
G^{s}_{\hat{Q}}(\mathbf y):=\frac{ \chi_{F_{s}^{\hat{Q}}}(\mathbf y)}{|Q|^{\frac{1}{2}}} \quad \text { and } \quad H^{s}_{Q}(\mathbf x):= \frac{ \chi_{Q \cap E_{s}^{Q}}(\mathbf x)}{|Q|^{\frac{1}{2}}}.
$$
 Then, $\{G^{s}_{\hat{Q}}\}_{\hat{Q}\in\mathscr{D}}$ and $\{H^{s}_{Q}\}_{Q\in\mathscr{D}}$ are NWO sequences for $L^{2}(\partial \Omega_k)$.
It follows from 
Lemma \ref{eq-NWO} that
$$
\|b\|_{ B_{p}(\partial\Omega_k) }\lesssim \|[b,{\bf S}]\|_{S^p} .
$$
The proof of Proposition \ref{proS1} is complete.
\end{proof}

\smallskip

\section{\bf Commutator of Cauchy--Szeg\"o projection: Proof of (2) of Theorem \ref{main thm}} 

In this section we  prove  $(2)$ of Theorem \ref{main thm}. The proof relies on applying Theorem \ref{TaylorForOmega for Intro} about Taylor's expansion to a function $b\in C^2(\partial\Omega_k)$, as well as on the non-degeneracy property of the Cauchy--Szeg\"o kernel proved in Lemma \ref{lemA4}.  

Following Section~\ref{sec:lifting}, we identify $G$ with $\mathbb R^N$. Points in $G$ will then just be vectors  of the form ${\boldsymbol \xi}=(x_1,x_2,x_3,\dots,x_{N-1},x_N)$.
Set $\|{\boldsymbol \xi}\|_G = \tilde{d}_{cc}({\boldsymbol \xi},{\bf 0})$. This is a homogeneous norm on $G$, in the sense that $\| \delta_\lambda ({\boldsymbol \xi})\| = \lambda \|{\boldsymbol \xi}\|$ for every ${\boldsymbol \xi}\in G$ and $\lambda> 0$. Define
\[
   \varphi({\boldsymbol \xi})= \left\{
                \begin{array}{ll}
                  0\qquad \qquad \qquad \quad\hskip.2cm {\rm if} \quad \|{\boldsymbol \xi}\|_G\geq1,\\[5pt]
                  c\exp\big({1\over \|{\boldsymbol \xi}\|_G^{\mathcal Q}-1}\big)\qquad {\rm if} \quad \|{\boldsymbol \xi}\|_G<1,
                 \end{array}
              \right.
  \]
where ${\mathcal Q}$ is the homogeneous dimension of $G$ defined in Section \ref{sec:lifting} and  $c$ is the positive constant such that 
$$\int_{B_{\tilde{d}_{cc}}({\bf 0},1)}\varphi({\boldsymbol \xi})d{\boldsymbol \xi}=1.$$
The integral is with respect to the Haar measure on $G$, which coincides with the Lebesgue measure in $\mathbb R^N$. It is both left and right-invariant.
We also set 
$$ \varphi_\epsilon({\boldsymbol \xi})={1\over \epsilon^{\mathcal Q}} \varphi(\delta_{1\over \epsilon}({\boldsymbol \xi})).$$

For $b\in L^{1}_{loc}(\partial\Omega_k)$, let $\tilde{b}$ be the function on $G$ defined by
$\tilde{b}({\boldsymbol \xi})= b(\Theta^{-1}({\boldsymbol \xi}))$, which is in $L^{1}_{loc}(G)$.
For every ${\boldsymbol \xi}\in G$, and for small $\varepsilon$,
define
$$ \tilde b_\varepsilon({\boldsymbol \xi})= \int_{B_{\tilde{d}_{cc}}(0,\varepsilon)} \tilde{b}({\boldsymbol \xi}{\boldsymbol \eta}^{-1}) \varphi_\epsilon({\boldsymbol \eta}) d{\boldsymbol \eta}=\int_{B_{\tilde{d}_{cc}}({\bf x},\varepsilon)} \tilde{b}({\boldsymbol \eta}) \varphi_\epsilon({\boldsymbol \eta}^{-1}{\boldsymbol \xi})  d{\boldsymbol \eta}.$$
The functions $\tilde b_\varepsilon({\boldsymbol \xi})$ are in $C^\infty(G)$. Moreover, 
$$\tilde{b}_\varepsilon({\boldsymbol \xi})\to \tilde{b}({\boldsymbol \xi}),\qquad {\rm a.e.\ \ as\ \ } \varepsilon\to0.$$
In particular, the functions $b_\varepsilon({\bf x}) = \tilde{b}_\varepsilon(\Theta({\bf x}))$ are smooth on $\partial\Omega_k$ and 
$$b_\varepsilon({\bf x})\to b({\bf x}),\qquad {\rm a.e.\ \ as\ \ } \varepsilon\to0.$$
We set $${b}^{{\bf y}}({{\bf x}}):= {b}({\bf x}-{\bf y}).$$
 \begin{theorem}\label{main thm part 2} Suppose $k\geq2$
and $0<p\leq 4$. If $b\in C_0^\infty(\partial \Omega_k) \subset {\rm VMO}(\partial\Omega_k)$, then $[b,{\bf S}]\in S^p$
if and only if $b$ is a constant. Moreover, if 
$[{b}^{{\bf y}},{\bf S}]\in S^p$ for all $g\in B(o,1)\subset G$ with 
$$\sup_{g\in B(o,1)} \|[{b}^{{\bf y}},{\bf S}]\|_{ S^p} \leq C<\infty,$$
then $b$ is a constant.
\end{theorem}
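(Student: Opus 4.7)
The easy direction is immediate: constant $b$ forces $[b,\mathbf{S}]=0$. For the converse, I would first deduce the \emph{moreover} statement from the main claim for $b\in C_0^\infty$. Given $\sup_{\mathbf y\in B(o,1)}\|[b^{\mathbf y},\mathbf{S}]\|_{S^p}\leq C$, I would build the smooth mollifications $b_\varepsilon(\mathbf x)=\tilde b_\varepsilon(\Theta(\mathbf x))$ of the preamble, which tend to $b$ pointwise a.e. Since $[b_\varepsilon,\mathbf{S}]=\int[b^{\mathbf y},\mathbf{S}]\,\varphi_\varepsilon(\mathbf y)\,d\mathbf y$ by linearity, Minkowski's inequality (for $p\geq 1$) or the $p$-triangle inequality in the quasi-Banach ideal $S^p$ (for $0<p<1$) would give $\|[b_\varepsilon,\mathbf{S}]\|_{S^p}\lesssim C$ uniformly in $\varepsilon$. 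Applying the smooth case to each $b_\varepsilon$ would then force it to be constant, and letting $\varepsilon\to 0$ would force $b$ itself to be constant.

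\textbf{Besov finiteness for smooth $b$.} Fixing $b\in C_0^\infty(\partial\Omega_k)$ with $[b,\mathbf{S}]\in S^p$, and using $S^p\hookrightarrow S^q$ for $p\leq q$ (the $\ell^p$-summability of singular values embeds in $\ell^q$), I may assume $1<p\leq 4$. The argument in the proof of Proposition~\ref{proS1}, based on the pairing $Q\mapsto\hat Q$ and the non-degeneracy of the Cauchy--Szeg\"o kernel (Lemma~\ref{lemA4}) together with the NWO lower bound (Lemma~\ref{eq-NWO}), would apply verbatim to give
$$\sum_{Q\in\mathscr{D},\,\epsilon\not\equiv 1}\bigl(|\langle b,h_Q^\epsilon\rangle|/|Q|^{1/2}\bigr)^{p}\lesssim\|[b,\mathbf{S}]\|_{S^p}^{p}<\infty.$$
Combining this with Lemma~\ref{lemA1} would yield the continuous Besov finiteness
$$\int_{\partial\Omega_k\times\partial\Omega_k}\frac{|b(\mathbf x)-b(\mathbf y)|^{p}}{d(\mathbf x,\mathbf y)^2}\,d\mathbf x\,d\mathbf y<\infty.$$

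\textbf{Taylor contradiction at the critical index.} I would argue by contradiction. If $b$ is not constant, then since $X_1$, $X_2$ and their iterated brackets span the tangent space off the singular set $\{x_1^2+x_2^2=0\}$, there is a regular point $\mathbf x_0$ with $|X_1b(\mathbf x_0)|+|X_2b(\mathbf x_0)|>0$; WLOG $|X_1b|\geq c>0$ on a neighbourhood $U\ni\mathbf x_0$. Applying Theorem~\ref{TaylorForOmega for Intro} at each $\mathbf x\in U$ would give
$$b(\mathbf y)-b(\mathbf x)=(y_1-x_1)X_1b(\mathbf x)+(y_2-x_2)X_2b(\mathbf x)+O\bigl(d_{cc}(\mathbf x,\mathbf y)^2\bigr).$$
Restricting $\mathbf y$ to a horizontal sector where $|y_1-x_1|\geq \tfrac12 d_{cc}(\mathbf x,\mathbf y)$, which is a positive-measure portion of $B_{d_{cc}}(\mathbf x,d_{cc})$, and taking $d_{cc}(\mathbf x,\mathbf y)$ small enough to absorb the quadratic remainder, I would obtain $|b(\mathbf y)-b(\mathbf x)|\gtrsim d_{cc}(\mathbf x,\mathbf y)$. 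Comparing $|B_d(\mathbf x,r)|\approx r$ with $|B_{d_{cc}}(\mathbf x,s)|\approx s^4$ on the regular set yields the local relation $d_{cc}\approx d^{1/4}$, and hence
$$\int_{\partial\Omega_k\times\partial\Omega_k}\frac{|b(\mathbf x)-b(\mathbf y)|^{p}}{d(\mathbf x,\mathbf y)^2}\,d\mathbf x\,d\mathbf y\gtrsim\int_U\!\!\int_0^{r_0}\!s^{p/4-2}\,ds\,d\mathbf x=+\infty,$$
since $p/4-2\leq -1$ whenever $p\leq 4$. This contradicts the Besov finiteness of the previous step, so $X_1b\equiv X_2b\equiv 0$. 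By the bracket-generating property $b$ is then constant on the connected regular set, and everywhere on $\partial\Omega_k$ by continuity.

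The hard part will be this last step: identifying a positive-measure horizontal sector on which the linear Taylor term strictly dominates the quadratic remainder uniformly in the scale, and rigorously comparing the quasi-metric $d$ with the Carnot--Carath\'eodory distance $d_{cc}$ so that the Besov integrand produces the divergent exponent $s^{p/4-2}$ precisely at the critical index $p=4$.
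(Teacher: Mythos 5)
Your outline is essentially correct, and its second half takes a genuinely different route from the paper's. Both arguments rest on the same two pillars: the non-degeneracy of the Cauchy--Szeg\"o kernel (Lemma~\ref{lemA4}) combined with the NWO bound (Lemma~\ref{eq-NWO}) to convert $\|[b,{\bf S}]\|_{S^p}$ into an oscillation bound for $b$, and the first-order Taylor expansion (Theorem~\ref{TaylorForOmega}) together with the $\approx s^4$ volume growth of small $d_{cc}$-balls at regular points to produce the critical index $4$. Where you differ is in the middle: you observe (correctly --- nothing in the proof of Proposition~\ref{proS1} uses $p>4$, only $1<p<\infty$ through Lemma~\ref{eq-NWO}) that after the embedding $S^p\hookrightarrow S^4$ the necessity argument plus Lemma~\ref{lemA1} yields finiteness of the \emph{continuous} Besov integral, and you then derive the contradiction from the divergence of that integral, using the local comparison $d\approx d_{cc}^{4}$ near a regular point. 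The paper never returns to $B_p(\partial\Omega_k)$ for this part: it runs the median/NWO argument directly on the martingale differences $E_{\kappa+1}(b)-E_{\kappa}(b)$ (Lemma~\ref{lem approximation 2}), and obtains the contradiction by a dyadic count --- roughly $2^{\kappa}$ cubes of measure $\approx 2^{-\kappa}$ near the regular point, each with oscillation $\gtrsim\ell_{d_{cc}}(Q)\approx 2^{-\kappa/4}$ (Lemmas~\ref{lowerbound} and~\ref{const}) --- with the mollification performed on these scalar dyadic quantities (Lemma~\ref{lem approximation 1}) rather than on $S^p$ norms. Your route buys a cleaner intermediate statement ($B_p$, $p\le4$, contains only constants among $C^2$ functions) and reuses Section 5 wholesale; the paper's route stays with quantities that make sense for merely locally integrable symbols, which is what its ``moreover'' clause needs, and it never has to formulate $d\approx d_{cc}^4$ as a pointwise metric equivalence.

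Three points need tightening. First, in your reduction of the ``moreover'' part, integral Minkowski in $S^p$ is unavailable for $0<p<1$; embed $S^p\hookrightarrow S^4$ \emph{before} averaging (or average the scalar dyadic quantities as the paper does, where only $\ell^4$-Minkowski is used). Also note that the translates entering the convolution defining $b_\varepsilon$ are group translates through $\Theta$, not literally the Euclidean translates $b^{\bf y}({\bf x})=b({\bf x}-{\bf y})$; the paper makes the same identification in Lemma~\ref{lem approximation 1}, but you should state which family of translates your supremum runs over. Second, your sector $\{|y_1-x_1|\ge\tfrac12 d_{cc}({\bf x},{\bf y})\}$ does not by itself prevent cancellation between $(y_1-x_1)X_1b({\bf x})$ and $(y_2-x_2)X_2b({\bf x})$; either shrink the sector in the $y_2$-direction (using a bound on $|X_2b|$ near ${\bf x}_0$) or match signs as in Lemma~\ref{lowerbound}, where both first-order terms are chosen nonnegative; either fix keeps a fixed proportion of each annulus, so the divergence is unaffected. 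Third, the relation $d\approx d_{cc}^{4}$ near a regular point should not be inferred from comparing ball volumes alone; it follows from \eqref{k-size1 dcc}, i.e.\ $d({\bf x},{\bf y})\approx\mu\big(B_{d_{cc}}({\bf x},d_{cc}({\bf x},{\bf y}))\big)$, together with the volume formula from \cite{Di} invoked in Lemma~\ref{const}; with that citation your exponent $s^{p/4-2}$ is exactly the paper's dyadic computation in continuous form.
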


By Proposition 4.2.17 and Proposition 6.1.1 in \cite{Di}, we see that the size condition of 
the Cauchy--Szeg\"o projection can also be characterized as
\begin{align}\label{k-size1 dcc}
|S(z,t;\bw, s)|
\approx {1\over Vol(B((z,t) ,d_{cc}((z,t),(\bw, s)))}. 
\end{align}

Thus, based on \eqref{k-size1} in Theorem B in the introduction, we see that 
$$ d((z,t),(\bw, s)) \approx Vol(B((z,t) ,d_{cc}((z,t),(\bw, s))).$$

 Also, we denote $\nabla$ be the horizontal gradient of $\partial\Omega_k$ defined by $\nabla f:=(X_{1}f,X_{2}f)$. Then we can show a lower bound for a local pseudo-oscillation of the symbol $b$ in the commutator.
\begin{lemma}\label{lowerbound}
Let $b\in C^{2}(\partial\Omega_k)$. Let $\mathscr{D}$ be a system of dyadic cubes as in Section \ref{sec:dyadic_cubes} such that the reference dyadic points contain the form $(0,0,t_\alpha)$, $\alpha\in \mathscr{A}_\kappa$ for $\kappa\in\mathbb Z$.  Assume that there is a point $\x_{0}\in\partial\Omega_k$ such that $\nabla b(\x_{0})\neq 0$. Then there exist $C>0$, $\varepsilon>0$ and $N>0$ such that if $\kappa>N$, then for any dyadic cube $Q\in \mathscr{D}_{\kappa}$ satisfying $d(\cent(Q),\x_{0})<\varepsilon$, one has $ Q^{\prime}, Q^{\prime\prime}\subset Q$
\begin{align*}
\bigg|\frac{1}{| Q'|}\int_{ Q'}b(\x')d\x'- \frac{1}{| Q''|}\int_{ Q''}b(\x'')d\x''\bigg|\geq C\ell_{d_{cc}}(Q)|\nabla b(\x_{0})|.
\end{align*}
\end{lemma}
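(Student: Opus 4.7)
The plan is to combine the first--order Taylor expansion of Theorem~\ref{TaylorForOmega for Intro} with a careful selection of two sub-cubes $Q',Q''$ of $Q$ whose horizontal centroids are separated in a direction aligned with $\nabla b(\x_0)$. The linear term of the expansion will then dominate the quadratic remainder once $\ell_{d_{cc}}(Q)$ is small, producing the claimed lower bound.

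First I would fix $\varepsilon>0$ so small that $|\nabla b(\x)-\nabla b(\x_0)|\leq \tfrac14|\nabla b(\x_0)|$ for every $\x$ in a neighbourhood of $\x_0$ containing the relevant Taylor ball, and let $M:=\sup\{|X_iX_j b(\z)|: d_{cc}(\z,\x_0)\leq K\varepsilon,\ i,j=1,2\}$ for a constant $K$ large enough to cover the balls that appear in the supremum of Theorem~\ref{TaylorForOmega for Intro}. Writing $\mathbf{x}_Q=\cent(Q)$, the theorem applied around $\mathbf{x}_Q$ gives, for every $\y\in Q$,
\[
 b(\y) = b(\mathbf{x}_Q) + (y_1-x_{Q,1})\,X_1 b(\mathbf{x}_Q) + (y_2-x_{Q,2})\,X_2 b(\mathbf{x}_Q) + R(\mathbf{x}_Q,\y),
\]
with $|R(\mathbf{x}_Q,\y)|\leq C M\,d_{cc}(\mathbf{x}_Q,\y)^2\leq C'M\,\ell_{d_{cc}}(Q)^2$. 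Averaging over a sub-cube $\widetilde Q\subset Q$ and subtracting the resulting identities for two sub-cubes $Q',Q''\subset Q$ yields
\[
 \frac{1}{|Q'|}\int_{Q'} b\ -\ \frac{1}{|Q''|}\int_{Q''} b \;=\; \sum_{j=1}^{2}\bigl(\bar{y}^{Q'}_j-\bar{y}^{Q''}_j\bigr)\,X_j b(\mathbf{x}_Q)\ +\ O\bigl(M\,\ell_{d_{cc}}(Q)^2\bigr),
\]
where $\bar{y}^{\widetilde Q}_j := |\widetilde Q|^{-1}\int_{\widetilde Q} y_j\,d\y$ for $j=1,2$.

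Next I would exhibit, at a suitable fixed deeper dyadic level $\kappa+m$ (with $m$ depending only on the geometric data $\delta, a_1, A_1, C_d$), two sub-cubes $Q',Q''\subset Q$ whose horizontal centroid difference $(\bar{y}^{Q'}_1-\bar{y}^{Q''}_1,\bar{y}^{Q'}_2-\bar{y}^{Q''}_2)$ has magnitude $\gtrsim \ell_{d_{cc}}(Q)$ and is aligned, up to a controlled angle, with the non-zero vector $(X_1 b(\mathbf{x}_Q),X_2 b(\mathbf{x}_Q))$. The hypothesis that the reference dyadic points include the whole family $(0,0,t_\alpha)$ is used precisely here: it allows one to anchor the dyadic decomposition so that the sub-cubes of $Q$ at level $\kappa+m$ have centres whose horizontal projections onto the $(x_1,x_2)$-plane form a grid that is sufficiently fine and well-spread in every horizontal direction. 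From such a grid one may pick a pair $Q',Q''$ realising the required alignment, yielding an inner product bounded below by $c\,\ell_{d_{cc}}(Q)|\nabla b(\mathbf{x}_Q)|\geq \tfrac{3c}{4}\,\ell_{d_{cc}}(Q)\,|\nabla b(\x_0)|$. Choosing $\kappa>N$ with $\delta^N$ small enough, the quadratic error $M\ell_{d_{cc}}(Q)^2$ is absorbed into at most half of this linear term, which finishes the proof.

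The hard step is this geometric one: producing two sub-cubes of $Q$ whose horizontal centroids are separated by an amount comparable to $\ell_{d_{cc}}(Q)$ \emph{in a prescribed horizontal direction}. This requires translating between the $d$-scale governing the dyadic generation (where $|Q|\approx\delta^\kappa$) and the $d_{cc}$-scale governing the Taylor remainder, together with a lower bound on the horizontal ``spread'' of the children of $Q$ at depth $m$; the distinguished centre property \eqref{eq:fixedpoint} for the points $(0,0,t_\alpha)$ is what makes such a spread estimate uniform. All the remaining ingredients --- the pointwise Taylor inequality, the continuity of $\nabla b$ and the boundedness of $M$ --- are immediate consequences of $b\in C^2(\partial\Omega_k)$ combined with Theorem~\ref{TaylorForOmega for Intro}.
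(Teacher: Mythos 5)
Your overall scheme is the same as the paper's: expand $b$ to first order via Theorem~\ref{TaylorForOmega}, pick two sub-cubes of $Q$ whose horizontal positions differ by $\approx\ell_{d_{cc}}(Q)$ in a way compatible with $\nabla b$, and absorb the quadratic remainder by taking $N$ large. However, the step you yourself single out as ``the hard step'' --- producing $Q',Q''\subset Q$ at a fixed deeper level whose horizontal centroid difference has size $\gtrsim\ell_{d_{cc}}(Q)$ and prescribed direction --- is asserted rather than proved, and the mechanism you propose for it does not work. The hypothesis that the reference dyadic points contain $(0,0,t_\alpha)$, i.e.\ the distinguished-centre property \eqref{eq:fixedpoint}, only pins certain points of the degenerate axis $\{z=0\}$ as centres at all levels (it is exploited in Lemma~\ref{const} to deal with that axis); it gives no information whatsoever about the horizontal arrangement of the remaining centres, and Hyt\"onen--Kairema cubes \cite{HK} carry no grid structure in the $(x_1,x_2)$-plane. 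So your claimed ``grid that is sufficiently fine and well-spread in every horizontal direction'' has no justification, and with it the whole lower bound on the linear term is unsupported.

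The correct mechanism is metric, not combinatorial, and this is exactly what the paper does: $Q$ contains a $d$-ball $B(c_Q,r_Q)$ with $r_Q\approx\ell(Q)$, and by the volume estimates of \cite{BGGr3} (Prop.~9.8) together with $r_Q\approx \ell_{d_{cc}}(Q)^2\big(\ell_{d_{cc}}(Q)^2|w_Q|^{2k-2}+\ell_{d_{cc}}(Q)^{2k}\big)$ from \cite{Di}, the horizontal coordinates of points of $B(c_Q,r_Q)$ fill (and are confined to) a disc of radius $\approx\ell_{d_{cc}}(Q)$ about $w_Q$; one then chooses two points $(x_1',x_2',t_Q)$, $(x_1'',x_2'',t_Q)$ in this ball with $a_j(x_j'-x_j'')\approx\ell_{d_{cc}}(Q)$, $a_j\in\{-1,1\}$ chosen so that each term $(x_j'-x_j'')X_jb$ has a fixed sign (sign-matching coordinatewise suffices --- no directional alignment is needed, since then the two linear contributions cannot cancel), and finally takes $Q',Q''\in\mathscr{D}_{\kappa+\kappa_0}$ to be the dyadic cubes of a fixed deeper generation containing these points; nestedness puts them inside $Q$, and their smallness preserves the sign and size of the coordinate differences for all pairs $(\x',\x'')\in Q'\times Q''$. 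If you replace your grid argument by this ball-comparison argument (which also supplies the $d$-to-$d_{cc}$ scale translation you flagged but did not carry out), the rest of your computation --- expanding at $\cent(Q)$ and using continuity of $\nabla b$, versus the paper's expansion of $b(\x'')$ at $\x'$ using the sign-preservation of $X_jb$ near $\x_0$ --- is a harmless variant and goes through.
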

\begin{proof}
Since $\nabla b(\x_{0})\neq 0$, we deduce that $(X_1b(\x_{0}), X_2b(\x_{0}))\not=(0,0).$
Choose $\varepsilon$ small enough such that for any $\x$ with $d(\x_0,\x)<3C_d\varepsilon$, $X_ib(\x)$ has the same sign as $X_ib(\x_{0})$ and $|X_ib(\x)|>|X_ib(\x_{0})|/2$, $i=1,2$. Choose $N$ such that when $\kappa>N$, for any dyadic cube $Q\in \mathscr{D}_{\kappa}$, $\ell(Q)\approx 2^{-\kappa}<\varepsilon$.

Denote $ c_Q :=\cent(Q):=\{ c_Q ^{1}, c_Q ^{2},  t_Q\}$ and $\x=(g_1,g_{2},t)$, then by the { Taylor estimates  in Theorem \ref{TaylorForOmega},
$$b(\x)=P_{c_Q}( \x-c_Q)+R(\x, c_Q ),$$
where the remainder term $R(\x, c_Q )$ satisfies
\begin{align*}
|R(\x, c_Q )|\leq Cd_{cc}(\x, c_Q )^2  \sup_{\substack{  d_{cc}(\z, c_Q ) < b  d_{cc}(\x, c_Q ),\\ i,j=1,2 }} | X_iX_j b (\z)|
\end{align*}
and 
$P_{c_Q}( \x-c_Q)$ is the first order ``polynomial'' as constructed in Theorem \ref{TaylorForOmega}.
}

For $Q\in \mathscr{D}_{\kappa}$ with $\kappa>N$, we denote its center $ c_Q =( w_Q, t_Q)\in \mathbb C\times \mathbb R$.  We now consider those $Q\in \mathscr{D}_{\kappa}$ with $\kappa>N$ such that $d( c_Q ,\x_0)<\varepsilon$.  It is clear that $Q$ contains the metric ball $B( c_Q ,  r_Q)$ centered at $ c_Q $ with radius $ r_Q$ comparable to $\ell(Q)$. Based on the proof of Proposition 9.8 in \cite{BGGr3}, we see that for any point $(z,t)\in B( c_Q ,  r_Q)$,
if $r_Q<|w_Q|^{2k+2}/2$ then $|z-w|^4\lesssim r_Q|w|^{2-2k}$; if $r_Q\geq|w_Q|^{2k+2}/2$ then $|z-w|^4\lesssim r_Q^{1+{1-k\over 1+k}}$. We now consider the $d_{cc}$-ball centered at $c_Q$ with radius $\ell_{d_{cc}}(Q)$ whose measure is comparable to the measure of $B( c_Q ,  r_Q)$, which is $r_Q$. Then we recall that from \cite[Prop 3.4.4]{Di} that 
$$ r_Q\approx  \ell_{d_{cc}}(Q)^2 ( \ell_{d_{cc}}(Q)^2|w_Q|^{2k-2}+\ell_{d_{cc}}(Q)^{2k}).$$
Then by considering the two cases $r_Q<|w_Q|^{2k+2}/2$ and $r_Q\geq|w_Q|^{2k+2}/2$ as in the above, we obtain that
$$ |z- w_Q|\lesssim  \ell_{d_{cc}}(Q).  $$
Thus, one can choose $(x'_1,x'_2,  t_Q)$ and $(x''_1,x''_2,  t_Q)$ in $B( c_Q ,  r_Q)$ such that $a_1(x'_1-x''_1)\approx a_2(x'_2-x''_2) \approx \ell_{d_{cc}}(Q)$, where $a_1,a_2\in\{-1,1\}$. Then choosing a large fixed positive number $\kappa_0$, we have that 
there exist  cubes $ Q^{\prime}\in\mathscr{D}_{\kappa +\kappa_0}$, $ Q^{\prime\prime}\in\mathscr{D}_{\kappa+\kappa_0}$ such that $ Q^{\prime}\subset Q$, 
$ Q^{\prime\prime}\subset Q$ and for every $(x'_1,x'_2, t_{ Q'})\in  Q'$, $(x''_1,x''_2, t_{ Q''})\in  Q''$, we have  $X_jb(\x_{0})(x'_{j}-x''_{j})\gtrsim \ell_{d_{cc}}(Q)$ ($j=1,2$). Therefore, by Theorem \ref{TaylorForOmega},
\begin{align*}
&\bigg|\frac{1}{| Q'|}\int_{ Q'}b(\x')d\x'- \frac{1}{| Q''|}\int_{ Q''}b(\x'')d\x''\bigg|\\
&\gtrsim \frac{1}{| Q''| | Q'|}\left|\int_{ Q''}\int_{ Q^{\prime}}{ \big( (x''_1-x'_1)X_1b(\x')+(x''_2-x'_2)X_2b(\x') \big)} d\x'd\x''\right|\\
&\qquad -{1\over | Q''|}\int_{ Q''}{1\over | Q'|}\int_{ Q^{\prime}}|R(\x',\x'')|d\x'd\x'' \\
&\geq {C \ell_{d_{cc}}(Q)}\sum_{j=1}^2|X_jb(\x_0)|-C\ell_{d_{cc}}(Q)^{2} 
\sup_{\substack{  d_{cc}(\z, \x'' ) < b^{2}  d_{cc}(\x',\x''),\\ i,j=1,2 }} | X_iX_j b (\z)|\\
&\geq C\ell_{d_{cc}}(Q)|\nabla b(\x_{0})|,
\end{align*}
where the last inequality holds since we choose $N$ to be a sufficient large constant such that the remainder term can be absorbed by the first term.
\end{proof}

\begin{lemma}\label{const}
Let $b\in C^{2}(\partial\Omega_k)$.  Let $\mathscr{D}$ be a   system of dyadic cubes as in Section 2.2 such that the reference dyadic points contain the form $(0,0,t_\alpha)$, $\alpha\in \mathscr{A}_\kappa$ for $\kappa\in\mathbb Z$. Suppose that 
\begin{align}\label{b norm finite}
\left\|\left\{ b_{ Q''}-b_{ Q^{\prime}}\right\}_{T\in \mathscr{D}}\right\|_{\ell^{4}}<+\infty,
\end{align}
then $b$ is a constant. 
\end{lemma}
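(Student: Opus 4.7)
The plan is to argue by contradiction: suppose $b\in C^2(\partial\Omega_k)$ is not constant. The first step is to locate a point $x_0\in\partial\Omega_k$ at which both $\nabla b(x_0)\neq 0$ and $|w_{x_0}|:=(x_{0,1}^2+x_{0,2}^2)^{1/2}>0$. For the first condition I would invoke Chow's theorem on the open dense subset $\{x_1^2+x_2^2\neq 0\}$, where $\{X_1,X_2\}$ bracket-generate the tangent space of $\partial\Omega_k$: if $\nabla b$ vanished identically, $b$ would be constant along horizontal curves and hence on the (connected) complement of $\{x_1=x_2=0\}$, so constant on all of $\partial\Omega_k$ by continuity. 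The second condition is obtained by perturbing any such $x_*$ slightly within the open set where $\nabla b\neq 0$, moving it into the region $|w|>0$; this is possible because $\{x_1=x_2=0\}$ has empty interior in $\partial\Omega_k$.

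The second step is to invoke Lemma~\ref{lowerbound} at $x_0$ to obtain constants $C>0,\varepsilon>0,N\in\mathbb Z$ such that for every generation $\kappa>N$ and every $Q\in\mathscr{D}_\kappa$ with $d(\cent(Q),x_0)<\varepsilon$ the sub-cubes $Q',Q''\subset Q$ produced there satisfy
$$|b_{Q''}-b_{Q'}|\;\geq\; C\,\ell_{d_{cc}}(Q)\,|\nabla b(x_0)|.$$
After possibly shrinking $\varepsilon$ so that $|w_Q|\approx r_0:=|w_{x_0}|$ for every such $Q$, I would combine the ball-volume identity
$$r_Q\approx \ell_{d_{cc}}(Q)^2\bigl(\ell_{d_{cc}}(Q)^2|w_Q|^{2k-2}+\ell_{d_{cc}}(Q)^{2k}\bigr)$$
recalled in the proof of Lemma~\ref{lowerbound} with $|Q|\approx r_Q$ from \eqref{mball}; since $\ell_{d_{cc}}(Q)\to 0$ as $\kappa\to\infty$ the first term dominates at fine scales, yielding the scaling relation
$$\ell_{d_{cc}}(Q)\;\approx\; r_0^{-(k-1)/2}\,|Q|^{1/4}.$$

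The third step is a counting estimate that makes the exponent $4$ critical. At generation $\kappa$ the cubes have comparable measure $|Q|\approx\delta^\kappa$, and by \eqref{mball} the number of $Q\in\mathscr{D}_\kappa$ with center in $B_d(x_0,\varepsilon)$ is $\approx \varepsilon/|Q|$. Combining this with the lower bound and the scaling identity, each generation $\kappa>N$ contributes
$$\sum_{\substack{Q\in\mathscr{D}_\kappa\\ d(\cent(Q),x_0)<\varepsilon}}|b_{Q''}-b_{Q'}|^4\;\gtrsim\; \frac{\varepsilon}{|Q|}\cdot|Q|\,r_0^{-2(k-1)}\,|\nabla b(x_0)|^4\;\gtrsim\; \varepsilon\,r_0^{-2(k-1)}\,|\nabla b(x_0)|^4,$$
a strictly positive constant independent of $\kappa$. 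Summing over all $\kappa>N$ gives a divergent series, contradicting the $\ell^4$-summability hypothesis, so $b$ must be constant.

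The step I expect to be the main obstacle is the uniform scaling $\ell_{d_{cc}}(Q)\approx |Q|^{1/4}$ at a base point with $|w|>0$: this is exactly where the critical exponent $4$ enters (the "lower dimension" of $\partial\Omega_k$ advertised in the introduction), and securing it uniformly for a full family of small cubes requires keeping $|w_Q|$ bounded away from zero and then determining which regime of the two-term ball-volume identity dominates. Once this is settled, the remainder reduces to a routine counting summation.
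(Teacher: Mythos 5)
Your proposal is correct and follows essentially the same route as the paper's proof: argue by contradiction via Lemma~\ref{lowerbound} at a point off the axis $\{x_1=x_2=0\}$ where $\nabla b\neq 0$, use the two-term ball-volume estimate to get $\ell_{d_{cc}}(Q)\approx |Q|^{1/4}$ (the paper normalizes $|w_{x_0}|=1$ where you keep $r_0$ explicit), count roughly $\varepsilon/|Q|$ cubes per generation, and obtain a divergent $\ell^4$ sum, finishing with horizontal connectivity plus continuity exactly as the paper does at the end.
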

\begin{proof}
We prove by contradiction. Assume that there is a point $\x_{0}\in\partial\Omega_k\backslash\{(0,0,t)\in\partial\Omega_k\}$ such that $\nabla b(\x_{0})\neq 0$.

We now consider a small number $\varepsilon>0$ and a large number $N>0$ with $2^{-N}<\varepsilon$, and then consider all cubes $Q\in \mathscr{D}_{\kappa}$ with $\kappa >N$ and with center $ c_Q =(w_Q,t_Q)$, satisfying $d( c_Q ,\x_{0})<\varepsilon$. Then we know that 
$\mu(Q)\approx 2^{-\kappa}$
and that for each $\kappa>N$, the number of such $Q$ is around $\varepsilon 2^{\kappa}$.
It is clear that $Q$ contains the metric ball $B( c_Q ,  r_Q)$ centered at $ c_Q $ with radius $ r_Q$ comparable to $\ell(Q)$. We now consider again the $d_{cc}$-ball centered at $c_Q$ with radius $\ell_{d_{cc}}(Q)$ whose measure is comparable to the measure of $B( c_Q ,  r_Q)$, which is $2^{-\kappa}$.

We now consider the case $\x_0=(x_0,y_0,t_0)$ with $x_0^2+y_0^2\not=0$. 
Without lost of generality, we have $x_0^2+y_0^2=1$. Then we choose $\varepsilon<1/2$. Based on the proof of Proposition 9.8 in \cite{BGGr3}, it is clear that $|w_Q-(x_0,y_0)|^4\lesssim \varepsilon$. This gives $|w_Q|\gtrsim 1-\varepsilon^{1\over4}$. 
Then we will only have the case that $|w_Q|\gtrsim \ell_{d_{cc}}(Q)$. Hence,
$2^{-\kappa} \approx \delta^4.$
Thus, $\delta\approx \ell(Q)^{1\over 4}\approx 2^{ -{\kappa\over 4}}.$

Let $b\in C^{2}(\partial\Omega_k)$ satisfying \eqref{b norm finite}.
Suppose that $b$ is not a constant.
Then there exists a point $\x_{0}\in \partial\Omega_k$ such that $\nabla b(\x_{0})\neq 0$. By Lemma \ref{lowerbound}, there exist  $\varepsilon>0$ and $N>0$ such that if $\kappa>N$, then for any cube $Q\in \mathscr{D}_{\kappa}$ satisfying $d(\cent(Q),\x_{0})<\varepsilon$,
\begin{align*}
\big| b_{ Q''}-b_{ Q^{\prime}}\big|\geq C\ell_{d_{cc}}(Q)|\nabla b(\x_{0})|.
\end{align*}
Note that for $\kappa>N$, the number of $Q\in \mathscr{D}_{\kappa}$ and $d(\cent(Q),\x_{0})<\varepsilon$ is at least  {$c 2^{\kappa}$}. 
Therefore, we have the following estimate.

If $\x_0=(x_0,y_0,t_0)$ where $x_0^2+y_0^2>0$, then we have
\begin{align*}
\left\|\left\{ b_{ Q''}-b_{ Q^{\prime}}\right\}_{ Q\in\mathscr{D}}\right\|_{\ell^{4}}
&\gtrsim  \Bigg(\sum_{\kappa=N+1}^{\infty}\sum_{\substack{Q\in \mathscr{D}_{\kappa}\\ Q:\ d(\cent(Q),\x_{0})<\varepsilon}} \ell_{d_{cc}}(Q)^{4}|\nabla b(\x_{0})|^{4}\Bigg)^{1\over 4}\\
&\gtrsim |\nabla b(\x_{0})|\Bigg(\sum_{\kappa=N+1}^{\infty}\sum_{\substack{Q\in \mathscr{D}_{\kappa}\\ Q:\ d(\cent(Q),\x_{0})<\varepsilon}} 2^{-\kappa }\Bigg)^{1\over 4}\\
&\gtrsim |\nabla b(\x_{0})|\Bigg(\sum_{\kappa=N+1}^{\infty} 2^\kappa 2^{-\kappa }\Bigg)^{1\over 4}\\
&=+\infty.
\end{align*}
This is also contradicted to the inequality \eqref{b norm finite}. 

Thus, $\nabla b(\x_{0})= 0$ for all $\x_{0}\in\partial\Omega_k\backslash\{(0,0,t)\in\partial\Omega_k\}$, and hence $b$ is a constant on $\x_{0}\in\partial\Omega_k\backslash\{(0,0,t)\in\partial\Omega_k\}$. Since the measure of $\{(0,0,t)\in\partial\Omega_k\}$ is zero, and $b$ is continuous, we know that $b$ is a constant.

Therefore, the proof of Lemma \ref{const} is complete.
\end{proof}

Let $\mathscr{D}_{\kappa}$ be the collection of dyadic cubes at level $k$ as in Section 2.1. We define the conditional expectation of a locally integrable function $f$ on $\partial\Omega_k$ with respect to the increasing family of $\sigma-$algebras $\sigma(\mathscr{D}_{\kappa})$ by the expression: $$E_{\kappa}(f)(\x)=\sum_{Q\in \mathscr{D}_{\kappa}}(f)_{Q}\chi_{Q}(\x),\ \x\in\partial\Omega_k,$$
where we denote $(f)_{Q}$ be the average of $f$ over $Q$, that is, $(f)_{Q}:={1\over |Q|}\int_{Q}f(\x)d\x$.

For $Q\in \mathscr{D}_{\kappa}$, we let $h_{Q}^{1}$, $h_{Q}^{2},\ldots, h_{Q}^{M_Q-1}$ be a family of Haar functions constructed in \cite{KLPW}. 
Next, we choose $h_{Q}$ among these Haar functions such that $\left|\int_{Q}b(\x)h_{Q}^{\epsilon}(\x)\,d\x\right|$ is maximal.

Note that the function $(E_{\kappa+1}(b)(\x)-E_{\kappa}(b)(\x))\chi_{Q}(\x)$ is a sum of $M_Q$ Haar functions. That is, we are in a finite dimensional setting and all $L^p$-spaces have comparable norms. So
we have that
\begin{align}\label{tttt1}
\left({1\over |Q|}\int_{Q}|E_{\kappa+1}(b)(\x)-E_{\kappa}(b)(\x)|^{p}\,d\x\right)^{1/p}
&\leq C  |T|^{-1/2}\left|\int_{Q}b(\x)h_{Q}(\x)\,d\x\right|,
\end{align}
where $C$ is a constant only depending on $p$ and $n$.

\begin{lemma}\label{lem approximation 1}
Let all the notation be the same as above.
Let $b\in L^{1}_{loc}(\partial\Omega_k)$, $\kappa_0$ be a fixed positive integer. 
Suppose that
\begin{align}\label{b norm finite}
&\sup_{{\bf y}: d_{cc}({\bf y},0)\leq1} \bigg\| \bigg\{ {1\over |Q|}\int_Q{1\over |Q|}\int_Q| E_{\kappa+\kappa_0}({b}^{{\bf y}})(\x')-E_{\kappa+\kappa_0}({b}^{{\bf y}})(\x'') | d\x'd\x''  \bigg\}_{(\kappa,Q):\ \kappa\in\mathbb Z, Q\in\mathscr{D}_\kappa} \bigg\|_{\ell^4}\\
&<+\infty,\nonumber
\end{align}
then $b$ is a constant. 
\end{lemma}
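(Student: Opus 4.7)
The plan is to reduce Lemma~\ref{lem approximation 1} to Lemma~\ref{const}, which handles $C^{2}$ symbols, by a mollification argument that exploits the translation-averaged form of hypothesis~\eqref{b norm finite}. Let $b_{\varepsilon}:=\tilde b_{\varepsilon}\circ\Theta$, where $\tilde b_{\varepsilon}$ is the group mollification on $G$ introduced at the start of this section. Then $b_{\varepsilon}\in C^{\infty}(\partial\Omega_{k})$ and $b_{\varepsilon}\to b$ almost everywhere as $\varepsilon\to 0^{+}$, so it suffices to prove that every $b_{\varepsilon}$ is constant.

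The heart of the argument is to transfer the hypothesis to $b_{\varepsilon}$ uniformly in $\varepsilon$. Writing the mollification out, one can express $b_{\varepsilon}^{\mathbf{y}}(\mathbf{x})$ as a weighted integral of $b^{\mathbf{y}'}(\mathbf{x})$ with $\mathbf{y}'$ ranging over a neighbourhood of $\mathbf{y}$ of $d_{cc}$-radius $\varepsilon$ and with weights summing to $1$. Minkowski's inequality in $\ell^{4}$, applied with the convolution variable outside and the dyadic double sum inside, together with the supremum over $\mathbf{y}'$ provided by~\eqref{b norm finite}, then shows that $b_{\varepsilon}$ satisfies~\eqref{b norm finite} with the same bound, uniformly in $\varepsilon>0$ and for $\mathbf{y}$ in a slightly shrunken ball, say $d_{cc}(\mathbf{y},0)\leq 1/2$. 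This is the only place where the supremum over translations in the original hypothesis is essential: since $\partial\Omega_{k}$ lacks a compatible global group structure, the mollification unavoidably introduces a small shift in the base point that must be absorbed by varying the translation parameter.

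Once~\eqref{b norm finite} is known for $b_{\varepsilon}$, specialise to $\mathbf{y}=0$ and invoke Lemma~\ref{const}. Because $E_{\kappa+\kappa_{0}}(b_{\varepsilon})$ is constant on every dyadic sub-cube at level $\kappa+\kappa_{0}$, and the sub-cubes $Q',Q''\subset Q$ furnished by Lemma~\ref{lowerbound} have measures comparable to $|Q|$ for the fixed parameter $\kappa_{0}$, the double average in~\eqref{b norm finite} dominates a positive constant multiple of $|(b_{\varepsilon})_{Q'}-(b_{\varepsilon})_{Q''}|$. Hence $\{(b_{\varepsilon})_{Q''}-(b_{\varepsilon})_{Q'}\}_{Q\in\mathscr{D}}\in\ell^{4}$, and Lemma~\ref{const} forces $b_{\varepsilon}$ to be constant for every $\varepsilon>0$. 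Letting $\varepsilon\to 0^{+}$, the a.e.\ convergence $b_{\varepsilon}\to b$ then yields that $b$ equals a constant almost everywhere, as claimed.

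The main technical obstacle is the transfer/Minkowski step: one must track carefully how group convolution on $G$, restricted to the embedded submanifold $\Theta(\partial\Omega_{k})$, realises as a weighted average of Euclidean translations in the $(x_{1},x_{2},x_{N})$-coordinates on $\partial\Omega_{k}$. The key observation making this possible is that the lifted function $\tilde b$ depends only on the coordinates $(x_{1},x_{2},x_{N})$ and is trivial in $x_{3},\dots,x_{N-1}$, so that $\tilde b(\Theta(\mathbf{x})\star\boldsymbol{\eta}^{-1})$ depends only on those coordinates of $\Theta(\mathbf{x})\star\boldsymbol{\eta}^{-1}$; by Lemma~\ref{Qj}(i) the first two coordinates undergo an exact Euclidean translation by $\boldsymbol{\eta}$, while the $N$-th coordinate shifts by an amount of order $\varepsilon$ in the $d_{cc}$-metric, and this residual shift is precisely what the supremum over $\mathbf{y}$ in~\eqref{b norm finite} absorbs.
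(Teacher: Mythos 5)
Your proposal follows the paper's own route almost verbatim: mollify $b$ through the lifted group $G$, transfer the hypothesis \eqref{b norm finite} to the smooth functions $b_\varepsilon$ uniformly in $\varepsilon$, observe that the double average over $Q$ dominates a constant times $|(b_\varepsilon)_{Q'}-(b_\varepsilon)_{Q''}|$ for the level-$(\kappa+\kappa_0)$ subcubes produced in Lemma~\ref{lowerbound}, invoke Lemma~\ref{const} to force each $b_\varepsilon$ to be constant, and let $\varepsilon\to 0$ using the a.e.\ convergence $b_\varepsilon\to b$. The reduction to Lemma~\ref{const} and the limiting step are exactly the paper's argument and are fine.

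The weak point is the transfer step, and your justification of it does not work as written. You claim that $b_\varepsilon^{\bf y}(\x)$ is a weighted average of exact Euclidean translates $b^{{\bf y}'}(\x)$ with ${\bf y}'$ in an $\varepsilon$-neighbourhood of ${\bf y}$, the residual central shift being ``of order $\varepsilon$'' and absorbed by the supremum in \eqref{b norm finite}. Writing $b_\varepsilon(\x)=\int\tilde b\bigl(\Theta(\x)\ast{\boldsymbol\eta}^{-1}\bigr)\varphi_\varepsilon({\boldsymbol\eta})\,d{\boldsymbol\eta}$, the first two coordinates of $\Theta(\x)\ast{\boldsymbol\eta}^{-1}$ are indeed $x_1-\eta_1$ and $x_2-\eta_2$, but by Theorem~\ref{mainOt} the $\partial_{x_N}$-component of $\tilde X_1$ at $\Theta(\x)$ equals $x_2(x_1^2+x_2^2)^{k-1}$; consequently the $N$-th coordinate of $\Theta(\x)\ast{\boldsymbol\eta}^{-1}$ differs from $x_N$ by a polynomial in $(x_1,x_2,{\boldsymbol\eta})$ which for fixed ${\boldsymbol\eta}\neq{\bf 0}$ depends on the base point and grows like $|z|^{2k-1}$ as $|z|\to\infty$. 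So for ${\boldsymbol\eta}\neq{\bf 0}$ the function $\x\mapsto\tilde b\bigl(\Theta(\x)\ast{\boldsymbol\eta}^{-1}\bigr)$ is \emph{not} of the form $b^{{\bf y}'}$ for any fixed ${\bf y}'$, and the shift is not uniformly small: only the $d_{cc}$-displacement of the point is of order $\varepsilon$, which is not the same as a fixed translation. Minkowski's inequality therefore reduces the dyadic $\ell^4$ quantity for $b_\varepsilon$ to a supremum over these base-point-dependent (group) translates of $b$, whereas \eqref{b norm finite} only supplies control over the fixed Euclidean translates $b^{\bf y}$; bridging the two requires an additional argument (or a reformulation of the hypothesis in terms of group translations). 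To be fair, this is precisely the inequality the paper itself asserts without proof at the start of its argument, so your proof coincides with the paper's; but the extra justification you offer for that one unproved inequality is incorrect as stated, and this is where real work remains.
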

\begin{proof}
We now claim that for all small enough $\varepsilon$, $b_\varepsilon(\xi)$ is a constant.

If $b_\epsilon$ is not a constant, then there exists a point $\x_{0}\in \partial\Omega_k$ such that $\nabla b(\x_{0})\neq 0$.  Note that
\begin{align*}
&\bigg\| \bigg\{ {1\over |Q|}\int_Q{1\over |Q|}\int_Q| E_{\kappa+\kappa_0}(b_\varepsilon)(\x')-E_{\kappa+\kappa_0}(b_\varepsilon)(\x'') | d\x'd\x''  \bigg\}_{(\kappa,Q):\ \kappa\in\mathbb Z, Q\in\mathscr{D}_\kappa} \bigg\|_{\ell^4}\\
&\leq C \sup_{{\bf y}: d_{cc}({\bf y},0)\leq1}  \bigg\| \bigg\{ {1\over |Q|}\int_Q{1\over |Q|}\int_Q| E_{\kappa+\kappa_0}({b}^{{\bf y}})(\x')-E_{\kappa+\kappa_0}({b}^{{\bf y}})(\x'') | d\x'd\x''  \bigg\}_{(\kappa,Q):\ \kappa\in\mathbb Z, Q\in\mathscr{D}_\kappa} \bigg\|_{\ell^4}.
\end{align*}
Hence, 
\eqref{b norm finite} implies that 
\begin{align*}
&\bigg\| \bigg\{ {1\over |Q|}\int_Q{1\over |Q|}\int_Q| E_{\kappa+\kappa_0}(b_\varepsilon)(\x')-E_{\kappa+\kappa_0}(b_\varepsilon)(\x'') | d\x'd\x''  \bigg\}_{(\kappa,Q):\ \kappa\in\mathbb Z, Q\in\mathscr{D}_\kappa} \bigg\|_{\ell^4}<+\infty.
\end{align*}
Moreover, we have
\begin{align*}
&\bigg\| \bigg\{ {1\over |Q|}\int_Q{1\over |Q|}\int_Q| E_{\kappa+\kappa_0}(b_\varepsilon)(\x')-E_{\kappa+\kappa_0}(b_\varepsilon)(\x'') | d\x'd\x''  \bigg\}_{(\kappa,Q):\ \kappa\in\mathbb Z, Q\in\mathscr{D}_\kappa} \bigg\|_{\ell^4}\\
&>\bigg\| \bigg\{ {1\over |Q|}\int_Q{1\over |Q|}\int_Q| E_{\kappa+\kappa_0}(b_\varepsilon)(\x')-E_{\kappa+\kappa_0}(b_\varepsilon)(\x'') | d\x'd\x''  \bigg\}_{\substack{(\kappa,Q):\ \kappa\in\mathbb Z, \kappa>N, \\  Q\in\mathscr{D}_\kappa, d(\cent(Q),\x_{0})<\varepsilon }} \bigg\|_{\ell^4}\\
&\geq C\left\|\left\{ b_{ Q''}-b_{ Q^{\prime}}\right\}_{ Q\in\mathscr{D}}\right\|_{\ell^{4}}\\
&=+\infty
\end{align*}
by the argument in Lemma \ref{const}.

Hence, we have a contradiction. This yields that for every small enough $\varepsilon>0$, $b_\varepsilon(\xi)$ is a constant for all $\xi$ and  for any small $\varepsilon$. Since $b_\varepsilon\to b$  {\rm a.e.\  as\ } $\varepsilon\to0$, we see that $b$ is a constant.
\end{proof}

\begin{lemma}\label{lem approximation 2}
Let all the notation be the same as above.
Let $b\in {\rm VMO}(\partial\Omega_k)$ with $[b,{\bf S}]\in S^4$.
Then for any ${\bf y}\in\partial\Omega_k$ with $d_{cc}({\bf y},0)\leq1$, 
\begin{align}\label{b norm by Sp norm}
& \bigg\| \bigg\{ {1\over |Q|}\int_Q{1\over |Q|}\int_Q| E_{\kappa+\kappa_0}({b}^{{\bf y}})(\x')-E_{\kappa+\kappa_0}({b}^{{\bf y}})(\x'') | d\x'd\x''  \bigg\}_{(\kappa,Q):\ \kappa\in\mathbb Z, Q\in\mathscr{D}_\kappa} \bigg\|_{\ell^4}\\
&\leq C\| [{b}^{{\bf y}},{\bf S}]\|_{S^4},\nonumber
\end{align}
where the positive constant $C$ is independent of $b$ and $g$. 
\end{lemma}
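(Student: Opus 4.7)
The plan is to imitate the necessity half of Proposition~\ref{proS1}, applied to $b^{\bf y}$ in place of $b$, with one preparatory Jensen step that allows us to discard the conditional expectation $E_{\kappa+\kappa_0}$ at the very beginning.

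Writing $R(\x)$ for the unique cube of $\mathscr D_{\kappa+\kappa_0}$ containing $\x$, I would first observe that
$E_{\kappa+\kappa_0}(b^{\bf y})(\x') - E_{\kappa+\kappa_0}(b^{\bf y})(\x'') = \frac{1}{|R(\x')||R(\x'')|}\iint_{R(\x')\times R(\x'')}(b^{\bf y}(\z') - b^{\bf y}(\z''))\,d\z'd\z''$,
so that applying $|\cdot|$ inside, integrating in $\x',\x''\in Q$, and noting that for fixed $\z'\in Q$ one has $\int_Q |R(\x')|^{-1}\chi_{R(\x')}(\z')\,d\x' = 1$ (and similarly in $\x''$), Fubini gives
$$
\mathcal O(Q):=\frac{1}{|Q|^2}\!\int_Q\!\!\int_Q\!|E_{\kappa+\kappa_0}(b^{\bf y})(\x')-E_{\kappa+\kappa_0}(b^{\bf y})(\x'')|\,d\x'd\x''\;\le\;\frac{1}{|Q|^2}\!\int_Q\!\!\int_Q\!|b^{\bf y}(\z')-b^{\bf y}(\z'')|\,d\z'd\z''.
$$

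Next I would run the median-and-companion-cube trick of Proposition~\ref{proS1} on the right-hand side. Let $\hat Q$ be the companion cube produced by Lemma~\ref{lemA4}, let $m:=m_{b^{\bf y}}(\hat Q)$ be a median of $b^{\bf y}$ over $\hat Q$, and define $E_s^Q$ and $F_s^{\hat Q}$ ($s=1,2$) exactly as in that proof, so that $|F_s^{\hat Q}|\approx|Q|$, and on $(Q\cap E_s^Q)\times F_s^{\hat Q}$ the difference $b^{\bf y}(\x)-b^{\bf y}(\y)$ has a fixed sign dominating $|b^{\bf y}(\x)-m|$. The triangle inequality plus the insertion of the mass-one density $|F_s^{\hat Q}|^{-1}\chi_{F_s^{\hat Q}}(\y)\,d\y$ yields
$$
\mathcal O(Q)\;\le\;\frac{2}{|Q|}\sum_{s=1,2}\int_{Q\cap E_s^Q}|b^{\bf y}(\x)-m|\,d\x\;\lesssim\;\sum_{s=1,2}\int_{Q\cap E_s^Q}\!\!\int_{F_s^{\hat Q}}\!|S_1(\x,\y)|\,|b^{\bf y}(\x)-b^{\bf y}(\y)|\,d\y d\x,
$$
where we used the non-degeneracy $|S_1(\x,\y)|\gtrsim|Q|^{-1}$ from Lemma~\ref{lemA4}. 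Because $S_1$ and $b^{\bf y}(\x)-b^{\bf y}(\y)$ each have constant sign on the relevant product set, the absolute values may be removed at the price of a $\pm$ in front, and then taking real parts restores the full kernel $S=S_1+iS_2$. This produces
$$
\mathcal O(Q)\;\lesssim\;\sum_{s=1,2}|\langle[b^{\bf y},{\bf S}]G^s_{\hat Q},H^s_Q\rangle|,\qquad G^s_{\hat Q}:=|Q|^{-1/2}\chi_{F_s^{\hat Q}},\quad H^s_Q:=|Q|^{-1/2}\chi_{Q\cap E_s^Q}.
$$

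Finally I would invoke the Rochberg--Semmes characterization. The families $\{G^s_{\hat Q}\}_{Q\in\mathscr D}$ and $\{H^s_Q\}_{Q\in\mathscr D}$ are NWO sequences: they are $L^\infty$-normalized cutoffs of mass $|Q|^{1/2}$ supported on sets of measure comparable to $|Q|$, exactly as in Proposition~\ref{proS1}. Since $[b^{\bf y},{\bf S}]\in S^4$ is compact, Lemma~\ref{eq-NWO} with $p=4$ gives
$$
\bigl\|\{\mathcal O(Q)\}_{(\kappa,Q)}\bigr\|_{\ell^4}\;\lesssim\;\sum_{s=1,2}\Bigl(\sum_{Q\in\mathscr D}|\langle[b^{\bf y},{\bf S}]G^s_{\hat Q},H^s_Q\rangle|^4\Bigr)^{1/4}\;\lesssim\;\|[b^{\bf y},{\bf S}]\|_{S^4},
$$
with a constant independent of $b$ and of ${\bf y}$, which is \eqref{b norm by Sp norm}.

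The main obstacle I anticipate is the NWO bookkeeping in the final step: the sequence $\{G^s_{\hat Q}\}$ is naturally indexed by $Q$ but supported in the adjacent cube $\hat Q$ rather than in $Q$. Because $\hat Q$ is a dyadic cube of the same generation as $Q$ at controlled distance, the correspondence $Q\mapsto\hat Q$ is finite-to-one, and the associated maximal function is bounded on $L^p$; this is the same technical point that is resolved in the proof of Proposition~\ref{proS1}, and I would simply appeal to the argument used there. All remaining steps are direct adaptations of that proof to $b^{\bf y}$, together with the single Jensen step which is crucial for reducing $E_{\kappa+\kappa_0}(b^{\bf y})$ to $b^{\bf y}$ without losing control of constants.
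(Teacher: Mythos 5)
Your proof is correct, and its core coincides with the paper's: the median value over the companion cube $\hat Q$ of Lemma \ref{lemA4}, the sign constancy and lower bound $|S_1(\x,\hat\x)|\gtrsim|Q|^{-1}$, the passage from $S_1$ back to the full kernel $S$, and the Rochberg--Semmes bound of Lemma \ref{eq-NWO} with $p=4$ applied to the NWO families $\{|Q|^{-1/2}\chi_{F_s^{\hat Q}}\}$ and $\{|Q|^{-1/2}\chi_{Q\cap E_s^{Q}}\}$ (the paper, like you, indexes the first family by $Q$ while it is supported in $\hat Q$, and deals with this point in the same way). Where you genuinely deviate is the initial reduction: the paper first telescopes the conditional expectations into martingale differences, reduces the claim to $\sum_\kappa 2^\kappa\|E_{\kappa+1}(b^{\bf y})-E_\kappa(b^{\bf y})\|_{L^4}^4\lesssim\|[b^{\bf y},{\bf S}]\|_{S^4}^4$, and then passes to maximal Haar coefficients $|Q|^{-1/2}|\int_Q b^{\bf y}h_Q|$ via the finite-dimensional norm comparison \eqref{tttt1}, running the median argument on those coefficients; you instead use a single Jensen--Fubini step to discard $E_{\kappa+\kappa_0}$ outright and run the median argument directly on the double-average oscillation $\frac{1}{|Q|^2}\int_Q\int_Q|b^{\bf y}(\z')-b^{\bf y}(\z'')|\,d\z'd\z''$, never invoking Haar functions or \eqref{tttt1}. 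Your route is a bit shorter and more elementary; the paper's route keeps the estimate in the dyadic--Haar form already used in Proposition \ref{proS1} and Lemma \ref{lemA1}. One slip to fix: in your second display the right-hand side should carry the prefactor $\frac{1}{|Q|}$, since inserting the mass-one density and using $|S_1|\gtrsim|Q|^{-1}$ gives $\mathcal O(Q)\lesssim\frac{1}{|Q|}\sum_{s=1,2}\int_{Q\cap E_s^{Q}}\int_{F_s^{\hat Q}}|S_1(\x,\y)|\,|b^{\bf y}(\x)-b^{\bf y}(\y)|\,d\y\,d\x$; with that factor restored, your identification of this quantity with $|\langle[b^{\bf y},{\bf S}]G^s_{\hat Q},H^s_Q\rangle|$, and hence the final application of Lemma \ref{eq-NWO}, is exactly right, with constants independent of $b$ and ${\bf y}$.
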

\begin{proof}
It suffices to show that for any fixed ${\bf y}\in\partial\Omega_k$ with $d_{cc}({\bf y},0)\leq1$,
\begin{align}
& \sum_\kappa 2^\kappa \| E_{\kappa+1}({b}^{{\bf y}})-E_{\kappa}({b}^{{\bf y}})\|_{L^4(\partial\Omega_k)}^4\leq C \|[{b}^{{\bf y}},{\bf S}]\|_{ S^4},
\end{align}
where the positive constant $C$ is independent of $b$ and ${\bf y}$. 

We now fix ${\bf y}\in\partial\Omega_k$ with $d_{cc}({\bf y},0)\leq1$, and then follow the proof in Proposition \ref{proS1}. By \eqref{tttt1}, we have
\begin{align}\label{comcom1}
2^\kappa\int_{\partial\Omega_k}|E_{\kappa+1}({b}^{{\bf y}})(\x)-E_{\kappa}({b}^{{\bf y}})(\x)|^{4}d\x
&=\sum_{Q\in \mathscr{D}_{\kappa}}{1\over |Q|}\int_{Q}|E_{\kappa+1}({b}^{{\bf y}})(\x)-E_{\kappa}({b}^{{\bf y}})(\x)|^{4}d\x\nonumber\\
&\leq C\sum_{Q\in \mathscr{D}_{\kappa}}|Q|^{-2}\left|\int_{Q}{b}^{{\bf y}}(\x)h_{Q}(\x)d\x\right|^{4}.
\end{align}
To continue, for any $Q\in \mathscr{D}_{\kappa}$, let $\hat{Q}$ be the cube chosen in Lemma \ref{lemA4}, then without lost of generality, we may assume that 
$S_1(\x, \hat{\x})$ does not change sign for all $(\x, \hat{\x})\in Q \times \hat{Q}$ and
\begin{align}\label{lower for S4}
|S_1(\x, \hat{\x})| \gtrsim \frac{1}{|Q|}.
\end{align}
 Also, let $ m_{\hat{Q}}({b}^{{\bf y}})$ be a median value of ${b}^{{\bf y}}$ over $\hat{Q}$, as defined in \eqref{median}.

Next we denote
\begin{align*}
E_{1}^{Q}&:=\left\{\x\in Q:\ {b}^{{\bf y}}(\x) <  m_{\hat{Q}}({b}^{{\bf y}})\right\}\ \ {\rm and}\ \
E_{2}^{Q}:=\left\{\x\in Q:\ {b}^{{\bf y}}(\x)\geq m_{\hat{Q}}({b}^{{\bf y}})\right\},\\
F_{1}^{\hat Q}&:=\left\{{\hat \x}\in \hat{Q}:{b}^{{\bf y}}({\hat \x})\geq m_{\hat{Q}}({b}^{{\bf y}})\right\}\ \ {\rm and}\ \
F_{2}^{\hat Q}:=\left\{{\hat \x}\in \hat{Q}:{b}^{{\bf y}}({\hat \x})\leq m_{\hat{Q}}({b}^{{\bf y}})\right\}.
\end{align*}
Then by the definition of $ m_{\hat{Q}}({b}^{{\bf y}})$, we have $|F_{1}^{\hat Q}|=|F_{2}^{\hat Q}|\sim|\hat{Q}|$ and $F_{1}^{\hat Q}\cup F_{2}^{\hat Q}=\hat{Q}$. Note that for $s=1,2$, if $\x\in E_{s}^{Q}$ and $ \hat \x\in F_{s}^{\hat Q}$, then again we have
\begin{align*}
\left|{b}^{{\bf y}}(\x)- m_{\hat{Q}}({b}^{{\bf y}})\right|
&\leq
\left|{b}^{{\bf y}}({\hat \x})-{b}^{{\bf y}}(\x)\right|.
\end{align*}

Then by using the same calculation as in \eqref{Term12} we see that 
\begin{align}\label{comcom2}
&|Q|^{-1/2}\left|\int_{Q}{b}^{{\bf y}}(\x)h_{Q}(\x)d\x\right|\nonumber\\
&\leq \frac{1}{|Q|}\int_{Q\cap E_{1}^{Q}}\left|{b}^{{\bf y}}(\x)- m_{\hat{Q}}({b}^{{\bf y}})\right|d\x+ \frac{1}{|Q|}\int_{Q\cap E_{2}^{Q}}\left|{b}^{{\bf y}}(\x)- m_{\hat{Q}}({b}^{{\bf y}})\right|d\x\nonumber\\
&=:{\rm I}_{1}^{Q}+{\rm I}_{2}^{Q}.
\end{align}

Therefore, for $ s=1,2$,
\begin{align}\label{haha}
{\rm I}_{s}^{Q}&\lesssim \frac{1}{|Q|}\int_{Q\cap E_{s}^{Q}}\left|{b}^{{\bf y}}(\x)- m_{\hat{Q}}({b}^{{\bf y}})\right|d\x\frac{|F_{s}^{\hat Q}|}{|Q|}\nonumber\\
&\lesssim \frac{1}{|Q|}\int_{Q\cap E_{s}^{Q}}\int_{F_{s}^{\hat Q}}\left|{b}^{{\bf y}}(\x)- m_{\hat{Q}}({b}^{{\bf y}})\right|\left|S_1(\x, \hat{\x})\right|d{\hat \x}d\x\nonumber\\
&\lesssim \frac{1}{|Q|}\int_{Q\cap E_{s}^{Q}}\int_{F_{s}^{\hat Q}}\left|{b}^{{\bf y}}({\hat \x})-{b}^{{\bf y}}(\x)\right|\left|S_1(\x, \hat{\x})\right|d{\hat \x}d\x\nonumber\\
&=\frac{1}{|Q|}\left|\int_{Q\cap E_{s}^{Q}}\int_{F_{s}^{\hat Q}}({b}^{{\bf y}}({\hat \x})-{b}^{{\bf y}}(\x))S_1(\x, \hat{\x})\ d{\hat \x}d\x\right|,
\end{align}
where in the last equality we used the fact that $S_1(\x, \hat{\x})$ and ${b}^{{\bf y}}({\hat \x})-{b}^{{\bf y}}(\x)$ do not  change sign for $(\x,{\hat \x})\in (P_{i}\cap E_{s}^{Q})\times F_{s}^{Q}$, $s=1,2$. This, in combination with the inequalities \eqref{comcom1} and \eqref{comcom2}, implies that
\begin{align}\label{eee ortho S norm}
2^\kappa\int_{\partial\Omega_k} |E_{\kappa+1}({b}^{{\bf y}})(\x)-E_{\kappa}({b}^{{\bf y}})(\x)|^{4}d\x
& \lesssim \sum_{Q\in \mathscr{D}_{\kappa}}|Q|^{-2}\left|\int_{Q}{b}^{{\bf y}}(\x)h_{Q}(\x)d\x\right|^{4}\nonumber
 \lesssim  \sum_{s=1}^{2}\sum_{Q\in \mathscr{D}_{\kappa}}\left|{\rm I}_{s}^{Q}\right|^{4}\nonumber
\\
&\lesssim  \sum_{s=1}^{2}\sum_{Q\in \mathscr{D}_{\kappa}}\left(\sum_{i=1}^{M_Q}\left|\left\langle[{b}^{{\bf y}},{\bf S}] \frac{|P_{i}|^{1/2}
\chi_{F_{s}^{Q}}}{|Q|},\frac{\chi_{P_i\cap E_{s}^{Q}}}{|P_{i}|^{1/2}}\right\rangle\right|\right)^{4}.
\end{align}
Note that $e_Q:=\frac{|P_{i}|^{1/2}
\chi_{F_{s}^{Q}}}{|Q|} \subset \hat Q$ and $f_Q:=\frac{\chi_{P_i\cap E_{s}^{Q}}}{|P_{i}|^{1/2}} \subset Q$.
Sum this last inequality over $ \kappa\in \mathbb Z $, and appeal  to Lemma \ref{eq-NWO} to conclude \eqref{comcom1}.

The proof is complete.
\end{proof}

We now show Theorem \ref{main thm part 2}.

\begin{proof}[Proof of Theorem \ref{main thm part 2}]
Suppose $k\geq2$
and $0<p\leq 4$. If $b$ is a constant, then $[b,{\bf S}]$ is a zero operator, which is certainly in $S^p$.
Conversely, if $b\in C_0^\infty(\partial \Omega_k) \subset {\rm VMO}(\partial\Omega_k)$ with $[b,{\bf S}]\in S^p$, then by Lemmas \ref{lem approximation 1} and \ref{lem approximation 2} with ${\bf y}=0\in \partial\Omega_k$, we obtain that  $b$ is a constant.

Moreover, if there is a positive constant $C$ such that 
$$\sup_{ {\bf y}\in\partial\Omega_k:\ d_{cc}({\bf y},0)\leq1 } \|[{b}^{{\bf y}},{\bf S}]\|_{ S^p} \leq C<\infty,$$
then by Lemmas \ref{lem approximation 1} and \ref{lem approximation 2}, we see that $b$ is a constant.
\end{proof}

\bigskip

\bigskip
\bigskip

{\bf Acknowledgement:} Der-Chen Chang is partially supported by an NSF grant DMS-1408839 and a McDevitt Endowment Fund at Georgetown University. Ji Li and Alessandro Ottazzi are supported by the Australian Research Council (ARC) through the research grant DP220100285. Qingyan Wu is supported by the National Science Foundation of China (grant nos. 12171221 and 12071197), the Natural Science Foundation of Shandong Province (grant nos. ZR2021MA031 and 2020KJI002).

\end{document}